\newtheorem{definition}{\bfseries Definition}%[section]
\newtheorem{proposition}{\bfseries Proposition}%[section]
\newtheorem{example}{\bfseries Example}%[section]
\newtheorem{theorem}{\bfseries Theorem}
\newtheorem{corollary}{\bfseries Corollary}%[section]
\newtheorem{lemma}{\bfseries Lemma}%[section]
\newtheorem{remark}{\bfseries Remark}
\newcommand{\vect}[1]{\boldsymbol{\mathbf{#1}}}
\newcommand{\mat}[1]{\begin{bmatrix} #1 \end{bmatrix}}
\newcommand{\R}{\mathbb{R}}
\newcommand{\Z}{\mathbb{Z}}
\newcommand{\la}{\lambda}
\newcommand*\diff{\mathop{}\!\mathrm{d}}
\newcommand{\pa}{\partial}
\newcommand{\lv}{\left\Vert}
\newcommand{\rv}{\right\Vert}
\newif\ifdraft
\title{Flatness-based Quadcopter Trajectory Planning and Tracking with Continuous-time Safety Guarantees}
\author{Victor Freire,  Xiangru Xu\thanks{Victor Freire and Xiangru Xu are with the Department of Mechanical Engineering, University of Wisconsin-Madison,
        Madison, WI, USA. Email: 
        {\tt\small \{victor.freiremelgizo,xiangru.xu\}@wisc.edu}}}
\begin{document}
\maketitle

\begin{abstract}
This work presents a convex optimization framework for the planning and tracking of quadcopter trajectories with continuous-time safety guarantees. Using B-spline basis functions and the differential flatness property of quadcopters, a second-order cone program is formulated to generate optimal trajectories that respect safe state and input constraints in the continuous-time sense. A quadratic program (QP) based on control barrier functions is proposed to guarantee bounded trajectory tracking in continuous time by filtering a nominal controller, where the QP is shown to be always feasible. Furthermore, conditions that ensure the safe tracking controller respects thrust, roll angle, and pitch angle constraints are also proposed. The effectiveness of the proposed framework is demonstrated by real-world experiments using a Crazyflie2.1 nano quadcopter. 
%(including position, linear velocity, angle, angular velocity, thrust and waypoints constraints) 
% The generated trajectories are endowed with many layers of safety from state and input constraints to obstacle avoidance. A quadratic program based on control barrier functions guarantees bounded trajectory tracking in continuous-time by filtering a nominal controller. Furthemore, we guarantee the feasibility of the quadratic program and provide conditions to ensure the filtered controller respects input constraints. We present examples using the proposed framework in real-world experiments with a Crazyflie2.1 nano quadcopter. 

The video for the experiments of Section \ref{sec:exper} is available at %\cite{victor21video}. 
\url{https://xu.me.wisc.edu/wp-content/uploads/sites/1196/2021/10/continuous-safety.mp4}.
%\XX{This is the link I created for the video. We can update the video any time, but the size should be less than 24Mb.}\VF{The full video I sent you is 47.3 Mb. I can try lower qualities or trimming some content if that is what we want. YouTube can probably host larger videos, though.}\XX{We only need to include the three examples in the video. }

% \XX{Use `` '' for quotes, not '' ''.}
\end{abstract}

\section{Introduction}

Autonomous quadcopters have been widely used in many fields such as cinematography \cite{cinema}, search and rescue \cite{ludovic2014rescue}, agriculture \cite{navia2016multispectral} and delivery \cite{tang2018aggressive}. Many of these applications are safety-critical and require rigorous satisfaction of safety constraints on the states and inputs of the quadcopters for all time. Although numerous trajectory planning/tracking methods have been proposed for quadcopters, a systematic approach that provides a formal safety guarantee in the continuous-time sense is still largely lacking.
%most of them  provide guarantees at discrete times only, without  \cite{min_snap}.
%because their underlying algorithms are based on state/input discretization and/or are solved at discrete sample times \cite{min_snap,dueri_trajectory_2017}. %The works that target safety in continuous-time tend to provide algorithms that are too conservative or too slow.

The trajectory planning problem is usually split into finding discrete waypoints that avoid obstacles, and generating a smooth curve that respects the system dynamics through the waypoints. However, a trajectory generated this way is normally only safe at the discrete waypoints without a formal safety guarantee in-between. The inter-sampling safety can be improved by increasing the number of samples at the expense of computation, which works well in practice but lacks  a formal continuous-time guarantee \cite{Dontchev_2019,min_snap,polynomial}. Other works addressing the gap between discrete and continuous-time system trajectories include \cite{dueri_trajectory_2017,bonnans2017error,dontchev1981error}; for example, \cite{dueri_trajectory_2017} predicts when safety will be violated during each inter-sample time interval, but it relies on finite violations derived from polynomial dynamics systems.  Another body of literature exploits the differential flatness property of quadcopters and the convexity property of B-spline basis functions to generate formally safe trajectories in continuous-time \cite{Constrained_trajectory,Flat_trajectory_design,stoical2016obstacle,tang2021real}. These works find a sufficiently smooth trajectory in the flat output space which can be mapped to the states and inputs of the system with algebraic transformations. Recent works along this line of research provide continuous-time safety guarantees in the form of state, thrust and angle constraints \cite{Constrained_trajectory,Flat_trajectory_design} and obstacle avoidance \cite{stoical2016obstacle,tang2021real} constraints.  The main challenge with the flatness-based method lies in the highly nonlinear mapping between state/input spaces and flat output space, which makes the consideration of safety constraints in the state/input space challenging. Furthermore, the resulting optimization problem is usually nonconvex, which renders the online trajectory re-planning computationally intractable.

% , however, lie in the highly nonlinear map from states and inputs to the flat outputs\XX{what does this mean?}, and in the high-order time dependence \XX{what does this mean?} resulting in nonconvex optimization problems, which makes the online trajectory re-planning computationally intractable.

\begin{figure}
    \centering
    \includegraphics[width=8.5cm]{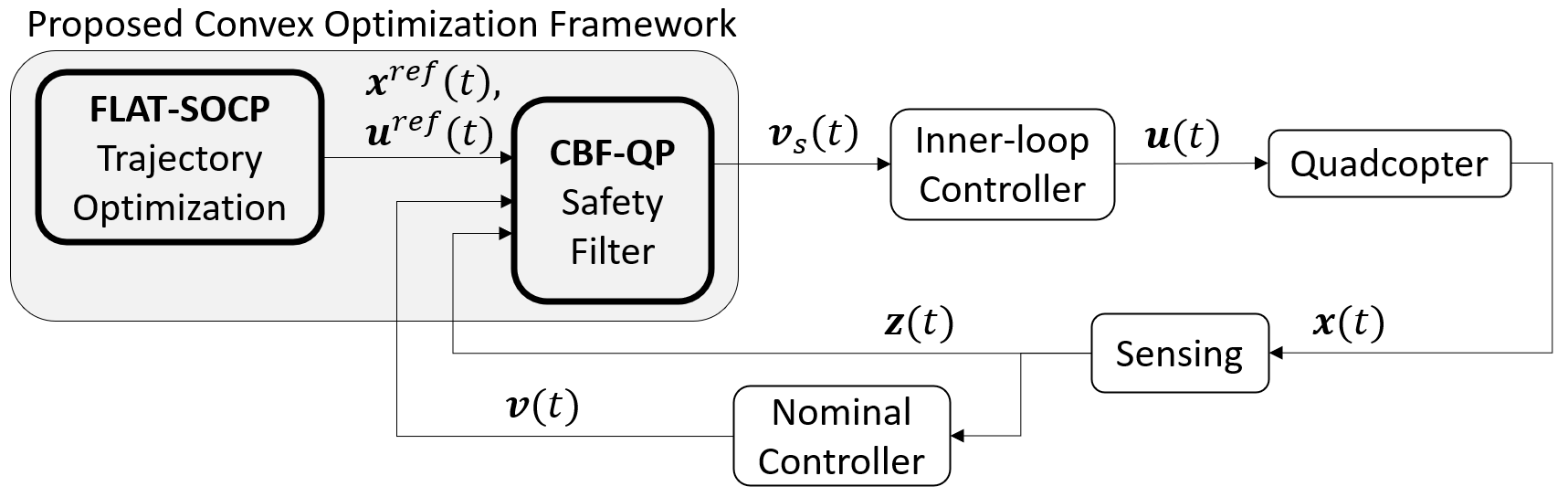}
    \caption{Configuration of the proposed convex optimization-based control framework for quadcopters with continuous-time safety guarantees. The safe trajectory planning is based on solving a second-order cone program, and the safe trajectory tracking is based on solving a quadratic program.}
    \label{fig:block_diagram_summary}
\end{figure}

Recently, there are also many works that aim to improve the tracking performance of quadcopters, particularly for aggressive flights \cite{invernizzi2020comparison,tal2020accurate,foehn2018onboard,FMPC}. 
For example,  \cite{tal2020accurate} tracks high-order derivatives to reject aerodynamic effects present in aggressive flights, \cite{foehn2018onboard} considers a state-dependent linearization of the quadcopter dynamics and applies LQR techniques when the system is operated far from the equilibrium point, and \cite{FMPC} employs flatness results to feedforward-linearize the MPC problem for optimal control. These controllers are usually endowed with stability and/or performance guarantees, but not safety.  Safe controllers based on reinforcement learning and Gaussian processes have been investigated for quadcopters in \cite{lupashin2010simple,singla2019memory,berkenkamp2017safe}. Another line of research that can provide formal safety guarantee in the sense of forward invariance of a given safe set in continuous time is based on control barrier functions (CBFs); see  \cite{ames2016control,cbf_pos,IBVS,rosolia2020unified} for more details.

This paper proposes a convex optimization-based framework that \emph{guarantees continuous-time safety satisfaction} of quadcopters for both trajectory planning and tracking (see Figure \ref{fig:block_diagram_summary}). The contributions of this paper are summarized as follows:
% \XX{Please change the following paragraph to a summary of contributions.}
\begin{itemize}
    \item We propose sufficient conditions for the $r$-th derivative of clamped B-spline curves to be included by a convex set. 
    %Based on the differential flatness of the quadcopter, we express/relax a set of state and input safety constraints (including position, linear velocity, attitude, thrust, angular velocity and waypoint constraints) into respective constraints in terms of the B-spline curve's control points.
    % Based on the differential flatness of the quadcopter and B-splines for the flat output parameterization, we propose  sufficient conditions for the $r$-th derivative of clamped B-spline curves to be included by a convex set, and express/relax a set of state and input safety constraints (including position, linear velocity, angular, thrust, angular velocity and waypoint constraints) into respective constraints that are convex in terms of the virtual control points.
    \item We present a second-order cone program (SOCP) framework to solve the safe trajectory planning problem, which includes position, linear velocity, angle, angular velocity, thrust and waypoints constraints, such that the reference trajectory generated satisfies the safety constraints rigorously in the continuous-time sense. 
    %, instead of only at discrete sampling times. 
    Obstacle avoidance is also considered within the SOCP framework.
    \item We propose a CBF-QP-based trajectory tracking method for quadcopters such that the real trajectory is in a prescribed tube of the nominal trajectory in the continuous-time sense. We present sufficient conditions to guarantee feasibility of the CBF-QP. Furthermore, we provide conditions to ensure the CBF-QP-based tracking controller respects thrust, roll angle and pitch angle constraints.
    %. Based on the safe reference trajectory, we use CBFs as a safety filter in the trajectory tracking to ensure the real trajectory is kept within a prescribed tube of the nominal trajectory in continuous time.
    % \item We present sufficient conditions for these CBFs to have the control sharing property such that the corresponding convex quadratic program (QP) is always feasible, and for the trajectory such that solutions of the QP respect input constraints.
    \item We demonstrate the proposed optimization-based algorithms through multiple experiments using a Crazyflie2.1 nano-quadcopter.
\end{itemize}

The remainder of the paper is organized as follows: Section \ref{Preliminaries} describes the quadcopter dynamics model,  differential flatness, B-spline curves, CBFs and second-order cone constraints; Section \ref{sec:Constraint} provides sufficient conditions for continuous-time set inclusion constraint satisfaction using clamped B-spline curves; Section \ref{sec:traj} formulates a SOCP for safe trajectory planning; Section \ref{sec:track} describes a convex CBF-based QP for safe trajectory tracking; Section \ref{sec:exper} demonstrates the experiments and finally, Section \ref{sec:Conclusions} concludes the paper.

\section{Preliminaries } \label{Preliminaries}
\subsection{Quadcopter Dynamics \& Differential Flatness} \label{Dynamics}
The equations of motion of a quadcopter used in this work are given as follows \cite{min_snap}: 
\begin{subequations} \label{dyn_model}
\begin{align}
    \ddot{\vect{r}} &\!=\! T\vect{z}_B -g\vect{z}_W, \label{dyn_model_acc}\\
    \dot{\vect{\xi}} &\!=\! N^{-1}(\vect{\xi})\vect{\omega},\\
    N(\vect{\xi}) &\!=\! \mat{1 & 0 & -\sin\theta\\
                         0 & \cos\phi & \sin\phi\cos\theta\\
                         0 & -\sin\phi & \cos\phi\cos\theta},\\
    R_B^W &\!=\! \mat{
c\theta c\psi & s\phi s\theta c\psi - c\phi s\psi & s\phi s\psi + c\phi s\theta c\psi\\
c\theta s\psi & s\phi s\theta s\psi + c\phi c\psi & c\phi s\theta s\psi - s\phi c\psi\\
-s\theta & s\phi c\theta & c\phi c\theta}\\
&= \mat{\vect{x}_B & \vect{y}_B & \vect{z}_B},\nonumber
\end{align}
\end{subequations}
where
%$\vect{z}_W = \mat{0 & 0 & 1}^T$, and  
\begin{align*}
  \vect{z}_W =\mat{0\\0\\1},\quad\vect{r}=\mat{x\\y\\z},\quad  \vect{\xi}=\mat{\phi \\\theta \\\psi},\quad \vect{\omega} =\mat{p \\q \\r},
\end{align*}
are the world-frame's z-axis, the position vector, the Z-Y-X Euler angle vector (see Figure \ref{fig:Coord_Frames}), and the angular velocity vector, respectively, $T$ is the normalized thrust, and $s,c$ stand for $\sin$ and $\cos$ respectively. The considered state and input vectors for a quadcopter are expressed, respectively, as:
\begin{subequations} \label{state_input}
\begin{align}
  \vect{x} &= \mat{x&y&z&\phi&\theta&\psi&\dot{x}&\dot{y}&\dot{z}}^T,\\
  \vect{u} &= \mat{T&p&q&r}^T.
\end{align}
\end{subequations}

\begin{figure}
    \centering
    \includegraphics[width=7cm]{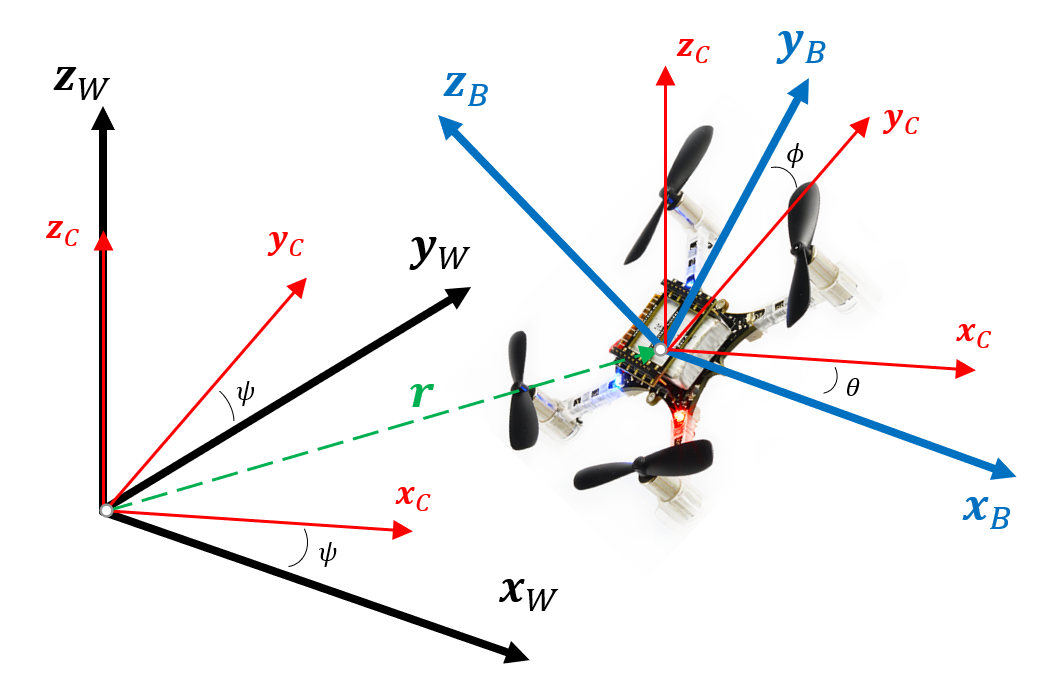}
    \caption{Inertial (or world) coordinate frame $W$ (black) and body coordinate frame $B$ (blue). The intermediate  coordinate frame $C$ (red) is also shown. Crazyflie2.1 figure from \cite{PX4}.}
    \label{fig:Coord_Frames}
\end{figure}

The quadcopter model shown in \eqref{dyn_model} is known to be differentially flat \cite{min_snap,fliess1995flatness}. 
By choosing flat outputs as:
$$
\vect{\sigma} = \mat{x & y & z & \psi}^T,
$$ 
the state and input can be expressed as a function of $\vect{\sigma}$ and its derivatives for some $\Phi$: 
%$    (\vect{x},\vect{u})=\Phi(\vect{\sigma},\dot{\vect{\sigma}},\ddot{\vect{\sigma}},\dddot{\vect{\sigma}})$, 
\begin{equation}
    (\vect{x},\vect{u})=\Phi(\vect{\sigma},\dot{\vect{\sigma}},\ddot{\vect{\sigma}},\dddot{\vect{\sigma}}),
    \label{flat_map}
\end{equation}
where the expressions of $\Phi$ are given as \cite{min_snap,zhou2014vector}:
\begin{subequations}
\begin{align}
    \phi &= \sin^{-1}\Big(\frac{\vect{z}_W^T\vect{y}_B}{\cos(\sin^{-1}(\vect{z}_W^T\vect{x}_B))} \Big),\\
    \theta &= -\sin^{-1}(\vect{z}_W^T\vect{x}_B),\\
    T &= \sqrt{\ddot{x}^2 + \ddot{y}^2 + (\ddot{z}+g)^2},\label{flat_T_map}\\
    p &= -\vect{y}_B \cdot \vect{h}_{\omega},\label{flat_p_map}\\
    q &= \vect{x}_B \cdot \vect{h}_{\omega},\label{flat_q_map}\\
    r &= \vect{z}_B \cdot \dot{\psi}\vect{z}_W,
\end{align}
\end{subequations}
with intermediate quantities:
\begin{align*}
    \vect{T} &= \mat{\ddot{x} & \ddot{y} & \ddot{z} + g }^T,\\
    \vect{y}_C &= \mat{-\sin\psi & \cos\psi & 0 }^T,\\
    \vect{z}_B &= \frac{\vect{T}}{\|\vect{T}\|_2},\\
    \vect{x}_B &= \frac{\vect{y}_C \times \vect{z}_B}{\|\vect{y}_C \times \vect{z}_B\|_2},\\
    \vect{y}_B &= \vect{z}_B\times\vect{x}_B,\\
    \vect{h}_{\omega} &= \frac{1}{\|\vect{T}\|_2}(\dddot{\vect{r}}-(\vect{z}_B \cdot \dddot{\vect{r}})\vect{z}_B).
\end{align*}
%where the construction of $\Phi   $ can be found in  \cite{min_snap,zhou2014vector}. 

Generating a trajectory for differentially flat systems reduces to finding a sufficiently smooth flat output trajectory \cite{murray1998real}. In the case of system \eqref{dyn_model}, the flat trajectory $\vect{\sigma}(t)$ needs to be at least thrice-differentiable.
%it suffices to find a thrice-differentiable flat trajectory $\vect{\sigma}(t)$. 
%\XX{The objective function in SOCP involves $r^{(4)}$?} \VF{We require $r^{(3)}$ to be continuous ($d=4$) for $\Phi$. The objective function \eqref{min_snap_obj} involves integrating $r^{(4)}$ which is discontinuous for ($d=4$) and continuous for ($d=5$). Since I approximate the integral with a discrete sum, $d=4$ is not a problem for the code.}

% We now expose the geometric meaning of two results from literature on quadcopter differential flatness \cite{effective_ang,raff}. This geometric interpretation will be key for the forthcoming results:

\subsection{B-Spline Curves} \label{B-Splines}
B-splines are widely used to parameterize trajectories because of their nice properties and ease of use. For example, the smoothness of B-splines makes the differentiability constraint of the flat trajectory $\vect{\sigma}(t)$ easy to satisfy, the convexity property allows one to explore state and input constraints in continuous time, and the locality property is useful for re-planning.

%\begin{definition}\cite{deboor}
Given a \emph{knot vector} $\vect{\tau} = (\tau_0, \ldots,\tau_v)^T$ satisfying $\tau_i \leq \tau_{i+1}$ where $i=0,\ldots,v-1$, a $d$-th degree \emph{B-spline basis} with  $d\in\Z_{> 0}$ is computed recursively as \cite{deboor}:
\begin{align*}
    \lambda_{i,0}(t) &= \begin{cases} 
      1, & \tau_i \leq t <\tau_{i+1}, \\
      0, & \text{otherwise}, \\
   \end{cases}\\
   \lambda_{i,d}(t)&=\frac{t-\tau_i}{\tau_{i+d}-\tau_i}\lambda_{i,d-1}(t)+ \frac{\tau_{i+d+1}-t}{\tau_{i+d+1}-\tau_{i+1}}\lambda_{i+1,d-1}(t).
\end{align*}
In this work, we consider the \emph{clamped, uniform B-spline basis}, which is defined over knot vectors satisfying:
\begin{subequations} \label{clamped_uniform}
\begin{align}
    \text{(clamped)} \quad &\tau_0=\ldots=\tau_d, \quad \tau_{v-d}=\ldots=\tau_{v},\label{clamped}\\
    \text{(uniform)}\quad &\tau_{d+1}-\tau_{d}=\ldots =\tau_{N+1}-\tau_{N},\label{uniform}
\end{align}
\end{subequations}
where $N = v-d-1$. A $d$-th degree \emph{B-spline curve} $\vect{s}(t)$ is a $m$-dimensional parametric curve  built by linearly combining \emph{control points} $\vect{P}_i\in\mathbb{R}^m (i=0,\ldots,N)$ and B-spline basis functions of the same degree. Noting $\vect{s}^{(0)}(t) = \vect{s}(t)$, we generate a B-spline curve and its $r$-th order derivative by:
\begin{equation}
    \vect{s}^{(r)}(t) = \sum_{i=0}^{N}\vect{P}_i\vect{b}_{r,i+1}^T\vect{\Lambda}_{d-r}(t) = PB_r\vect{\Lambda}_{d-r}(t),
    \label{b-spline_curve}
\end{equation}
where the control points are grouped into a matrix
\begin{equation} \label{defP}
    P = \mat{\vect{P}_0&\dots& \vect{P}_N} \in\R^{m\times (N+1)},
\end{equation}
the basis functions are grouped into a vector
\begin{equation}
    \vect{\Lambda}_{d-r}(t) = \left[\lambda_{0,d-r}(t),\dots ,\lambda_{N+r,d-r}(t)\right]^T\in\R^{N+r+1},
\end{equation}
and $\vect{b}_{r,j}^T$ is the $j$-th row of a time-invariant matrix $B_{r}$ whose construction is given in Appendix \ref{B_mat}. %(see \cite{fajar_thesis} for more detail). %\VF{I also think it is redundant to specify the dimensions of $P$ and $\vect{\Lambda}_{d-r}(t)$}.

\begin{definition}\label{virtual_control_points_old} The columns of $P^{(r)} \triangleq PB_{r}$ are called the \emph{$r$-th order virtual control points} (VCPs) of $\vect{s}^{(r)}(t)$ and denoted as $\vect{P}_i^{(r)}$ where $i=0,1,\ldots,N+r$, i.e., 
\begin{equation}\label{virtual_cp_eq}
    P^{(r)} = PB_{r} = \mat{\vect{P}_0^{(r)} & \dots & \vect{P}_{N+r}^{(r)}}.
\end{equation}
\end{definition}
Note that $P^{(0)} = P$ and that the first and last $r$ columns of $P^{(r)}$ are zero vectors because of the form of $B_{r}$. 

%Properties of B-splines are discussed in detail in \cite{deboor,fajar_thesis}. 
Some properties of B-splines that will be used in the following sections are listed below (see \cite{deboor,fajar_thesis} for more details). 
\begin{itemize}
    \item[P1)] \textit{Continuity}. Given a clamped knot vector satisfying \eqref{clamped},  $\vect{s}(t)$ is $C^{\infty}$ for any  $t\notin \vect{\tau}$ and $C^{d-1}$ for any $t\in\vect{\tau}$.
    \item[P2)] \textit{Convexity}. The B-Spline basis functions have the ``partition of unity'' property, meaning that: 
    %(i) $\lambda_{i,d}(t) \geq 0$; (ii) $\sum_{i=0}^{N}\lambda_{i,d}(t)= 1, \ \forall t\in[\tau_0,\tau_v)$.
    \begin{itemize}
        \item[(i)] $\lambda_{i,d}(t) \geq 0$;
        \item[(ii)] $\sum_{i=0}^{N}\lambda_{i,d}(t)= 1, \ \forall t\in[\tau_0,\tau_v)$.
    \end{itemize}
    \item[P3)] \textit{Local Support}. Any B-spline basis is only nonzero over a local set of knots: $\lambda_{i,d}(t)\neq 0$ iff $t\in[\tau_i,\tau_{i+d+1})$.
    \item[P4)] \textit{Strong Convexity}. Segments of the B-spline curve are contained within the convex hull of a limited set of control points. Combining each segment we obtain 
    %$ \vect{s}(t) \in \bigcup_{i=d}^{N}\text{Conv}\{\vect{P}_{i-d},\dots,\vect{P}_i\}.$
    \begin{equation*}
        \vect{s}(t) \in \bigcup_{i=d}^{N}\text{Conv}\{\vect{P}_{i-d},\dots,\vect{P}_i\}.
    \end{equation*}
\end{itemize} 
% \XX{I think the standard notation for convex hull is \text{Conv}.}

% \begin{itemize}
%     \item[P1)] \textit{Continuity}. Given a clamped and uniform knot vector satisfying \eqref{clamped_uniform},  $\vect{s}(t)$ is $C^{\infty}$ for any  $t\notin \vect{\tau}$ and $C^{d-1}$ for any $t\in\vect{\tau}$.
%     \item[P2)] \textit{Convexity}. The B-Spline basis functions have ``partition of unity'' property, meaning that: 
%     %(i) $\lambda_{i,d}(t) \geq 0$; (ii) $\sum_{i=0}^{N}\lambda_{i,d}(t)= 1, \ \forall t\in[\tau_0,\tau_v)$.
%     \begin{itemize}
%         \item[(i)] $\lambda_{i,d}(t) \geq 0$;
%         \item[(ii)] $\sum_{i=0}^{N}\lambda_{i,d}(t)= 1, \ \forall t\in[\tau_0,\tau_v)$.
%     \end{itemize}
%     \item[P3)] \textit{Local Support}. Any B-spline basis is only nonzero over a local set of knots: $\lambda_{i,d}(t)\neq 0$ iff $t\in[\tau_i,\tau_{i+d+1})$.
%     \item[P4)] \textit{Strong Convexity}. Segments of the B-spline curve are contained within the convex hull of a limited set of control points. Combining each segment we obtain 
%     %$ \vect{s}(t) \in \bigcup_{i=d}^{N}\text{Conv}\{\vect{P}_{i-d},\dots,\vect{P}_i\}.$
%     \begin{equation*}
%         \vect{s}(t) \in \bigcup_{i=d}^{N}\text{Conv Hull}\{\vect{P}_{i-d},\dots,\vect{P}_i\}.
%     \end{equation*}
% \end{itemize} 

\subsection{Control Barrier Functions}
% Consider an affine control system 
% \begin{eqnarray}
% \label{eqnsys}
% \dot{x} = f(x) + g(x) u
% \end{eqnarray}
% where $x \in \R^n$, $u \in U \subset \R^m$, $f:\R^n\rightarrow \R^n$ and $g:\R^n\rightarrow \R^m$ are locally Lipschitz continuous. A set $\mathcal{S}$ is called forward controlled invariant with respect to system \eqref{eqnsys} if for every $x_0 \in \mathcal{S}$, there exists a control signal $u(t)$ such that $x(t;t_0,x_0) \in \mathcal{S}$ for all $t\geq t_0$. %In this paper, it will be assumed that $U=\R^m$ (i.e., there is no constraint on the input).
%{\color{blue}It is assumed that $f(0)=0$.}

The concept of (zeroing)  control barrier functions (CBFs) was first proposed in \cite{Xu2015ADHS}, and was proven to be a powerful control design method to ensure the safety (in the sense of controlled invariance) of control affine systems \cite{ames2016control,xu2018correctness}. A provably safe control law is generated by solving a CBF-QP online \cite{Xu2015ADHS}. In this work, we will use time-varying CBFs with relative degree 2 and their control sharing property \cite{xu2018constrained}. %The definition of CBFs with relative degree $r$ is given as follows.

Consider a time-varying affine control system
\begin{align}\label{eqn:controlsys}
\dot{\vect{x}} = f(t,\vect{x}) + g(t,\vect{x})u,
\end{align}
where $\vect{x} \in \R^n$, $u \in U\subset \R$ and $f:\R\times \R^n \to \R^n$, $g:\R\times \R^n \to \R^n$ are piecewise continuous in $t$ and locally Lipschitz in $\vect{x}$. 
Given a sufficiently smooth time-varying function $h(t,\vect{x}):\R\times \R^n \to \R$, we define the modified Lie derivative of $h(t,\vect{x})$ along $f$ as $\bar L_f^ih:=(\frac{\pa }{\pa t}+L_f)^ih$  where $i$ is a non-negative integer. 

\begin{definition}\cite{xu2018constrained}\label{dfn:cbfhigh}
Given a control system \eqref{eqn:controlsys}, a $C^r$ function $h(t,\vect{x}): \R\times \R^n \to \R$ with a relative degree $r$ is called a CBF if  there exists a column vector ${\bf a}=(a_1,\ldots,a_r)^T\in\R^r$ such that for all $\vect{x}\in\R^n$, $t\geq 0$,
\begin{align}\label{ineq:ZCBF2}
	& \sup_{u \in U}  [L_g\bar L_f^{r-1}h(t,\vect{x})u +\bar L_f^rh(t,\vect{x})+{\vect{a}}^T\eta(t,\vect{x}) ] \geq 0,
\end{align}
where $\eta(t,\vect{x})=(\bar L_f^{r-1}h,\bar L_f^{r-2}h,\ldots,h)^T\in\R^r$, and the roots of $p_0^r(\la)=\la^r+a_1\la^{r-1}+...+a_{r-1}\la+a_r$ are negative reals $-\la_1,...,-\la_r<0$.
\end{definition}

Define a series of functions:%\VF{Redefining to avoid confusion with B-spline $\vect{s}^{(r)}(t)$}
\begin{align}\label{liftedh}
\nu_0(t,\vect{x})&=h(t,\vect{x}),\;\nu_{k}=\Big(\frac{\diff}{\diff t}+\la_k\Big)\nu_{k-1},\;1\le k\le r.
\end{align}
For any $t\geq 0,\vect{x}\in \R^n$, define the set of admissible inputs satisfying the CBF condition as $\mathcal{K}_h(t,\vect{x}):=\{u\in U|L_g\bar L_f^{r-1}h(t,\vect{x})u +\bar L_f^rh(t,\vect{x})+\vect{a}^T\eta(t,\vect{x}) \ge 0\}$. 

\begin{lemma}\cite{xu2018constrained}\label{thm:zcbf}
Given a control system \eqref{eqn:controlsys}, if 1) $h(t,\vect{x})$ with a relative degree $r$ is a CBF such that \eqref{ineq:ZCBF2} holds and the roots of $p_0^r(\la)=\la^r+a_1\la^{r-1}+...+a_{r-1}\la+a_r$ are  $-\la_1,...,-\la_r<0$, 2) $\nu_i$ defined in \eqref{liftedh} satisfy $\nu_i(0,\vect{x}_0)\ge 0$ for $i=0,1,...,r-1$, then any controller $u(t,\vect{x}) \in \mathcal{K}_h(t,\vect{x})$ that is Lipschitz in $\vect{x}$ will render $h(t,\vect{x})\ge 0$ for any $t\geq 0$.
\end{lemma}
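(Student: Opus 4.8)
The plan is to reduce the claim to a scalar backward induction along the chain of functions $\nu_0,\dots,\nu_r$ defined in \eqref{liftedh}, evaluated along the closed-loop trajectory. The crucial observation is that, after accounting for how $u$ enters the time derivatives of $h$ along \eqref{eqn:controlsys}, membership $u\in\mathcal{K}_h(t,\vect{x})$ is \emph{exactly} the statement $\nu_r(t,\vect{x})\ge 0$. Once this is established, the negativity of all roots $-\la_k$ lets us propagate nonnegativity from $\nu_r$ down to $\nu_0=h$ using the initial conditions in hypothesis 2).

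First I would record the algebraic identity underlying the definition of $\mathcal{K}_h$. Since the roots of $p_0^r$ are $-\la_1,\dots,-\la_r$, the operator factorizes as $p_0^r\!\left(\frac{\diff}{\diff t}\right)=\prod_{k=1}^r\!\left(\frac{\diff}{\diff t}+\la_k\right)$, so by \eqref{liftedh}, $\nu_r = h^{(r)}+a_1 h^{(r-1)}+\cdots+a_r h$, where $h^{(j)}$ denotes the $j$-th derivative of $t\mapsto h(t,\vect{x}(t))$ along \eqref{eqn:controlsys}. Because $h$ has relative degree $r$, one has $h^{(j)}=\bar L_f^j h$ for $j\le r-1$ (no dependence on $u$) and $h^{(r)}=\bar L_f^r h + L_g\bar L_f^{r-1}h\,u$. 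Substituting gives $\nu_r = L_g\bar L_f^{r-1}h\,u + \bar L_f^r h + \vect{a}^T\eta$, which is precisely the expression defining $\mathcal{K}_h$. This simultaneously shows that $\nu_0,\dots,\nu_{r-1}$ are independent of $u$, so hypothesis 2) is well-posed, and that any $u(t,\vect{x})\in\mathcal{K}_h(t,\vect{x})$ forces $\nu_r(t,\vect{x}(t))\ge 0$ along the closed loop.

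Next I would run the backward induction. Since the controller is Lipschitz in $\vect{x}$, on the maximal interval of existence the closed-loop solution $\vect{x}(\cdot)$ is absolutely continuous and each map $t\mapsto\nu_{k}(t,\vect{x}(t))$ is differentiable almost everywhere (as $f,g$ are only piecewise continuous in $t$), with $\dot\nu_{k-1}=\nu_k-\la_k\nu_{k-1}$ for $1\le k\le r$ by \eqref{liftedh}. Fix $k$ and assume $\nu_k(t,\vect{x}(t))\ge 0$ on the interval (the base case $k=r$ is the previous paragraph). Then $\frac{\diff}{\diff t}\big(e^{\la_k t}\nu_{k-1}\big)=e^{\la_k t}(\dot\nu_{k-1}+\la_k\nu_{k-1})=e^{\la_k t}\nu_k\ge 0$ a.e., hence $e^{\la_k t}\nu_{k-1}(t,\vect{x}(t))\ge \nu_{k-1}(0,\vect{x}_0)\ge 0$ by hypothesis 2), so $\nu_{k-1}(t,\vect{x}(t))\ge 0$. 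Iterating from $k=r$ down to $k=1$ yields $h(t,\vect{x}(t))=\nu_0(t,\vect{x}(t))\ge 0$, as claimed.

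The main technical care — more bookkeeping than genuine obstacle — is the low regularity in $t$: because $f$ and $g$ are merely piecewise continuous in $t$, the derivative identities and the integrating-factor step hold only almost everywhere, so the monotonicity/comparison argument must be stated for absolutely continuous functions; and the conclusion is to be read on the maximal interval of existence of the closed-loop solution, which forward completeness (or an a priori bound on $\vect{x}$) extends to all $t\ge 0$. The Lipschitz-in-$\vect{x}$ hypothesis on $u$ is precisely what guarantees a well-defined closed-loop solution for which these statements make sense.
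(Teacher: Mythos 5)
Your proof is correct: the identification of $\nu_r$ along the closed loop with the expression defining $\mathcal{K}_h$ (via the factorization $p_0^r(\tfrac{\diff}{\diff t})=\prod_k(\tfrac{\diff}{\diff t}+\la_k)$ and the relative-degree structure), followed by the backward integrating-factor induction from $\nu_r\ge 0$ down to $\nu_0=h\ge 0$ using the initial conditions $\nu_i(0,\vect{x}_0)\ge 0$, is exactly the cascade/comparison argument used for this result in the cited reference \cite{xu2018constrained}; the paper itself states the lemma without proof. Your caveats about almost-everywhere differentiability and the maximal interval of existence are appropriate and consistent with how the result is meant to be read.
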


Given a control system \eqref{eqn:controlsys} and $q (q\geq 2)$ CBFs $h_i(t,\vect{x}),i=1,...,q$, the \emph{control-sharing property} of CBFs was proposed in \cite{xu2018constrained} to ensure the feasibility of the CBF-QP that includes multiple CBF constraints. We refer the reader to Definition 2 in \cite{xu2018constrained} for more detail.
% \begin{definition}\cite{xu2018constrained}\label{dfn:mulcbfs}
% Consider control system \eqref{eqn:controlsys} and $q(q\ge 2)$ CBFs $h_i(t,\vect{x}): \R\times \R^n \to \R$, $i=1,...,q$, where $h_i$ has a well-defined (global, uniform) relative degree $r_i$, that is, $L_g\bar L_f^{r-1}h(t,\vect{x})\neq 0$, $L_g\bar L_f^{i}h(t,\vect{x})=0,\;i=0,1,...,r-2$, for any $t\in \R$,  $x\in\R^n$. Suppose that \eqref{ineq:ZCBF2} holds with $\eta_i(t,\vect{x})=(\bar L_f^{r_i-1}h_i,\bar L_f^{r_i-2}h_i,...,h_i)$ for some
% ${\bf a}_i=(a_1^i,\ldots,a_{r_i}^i)$ where the roots of $p_0^{r_i}(\lambda)$ are all negative. 
% The CBFs $h_1,...,h_q$ are said to have the \emph{control-sharing property} if for any $t\geq 0,\vect{x}\in \R^n$, there exists control $\vect{u}\in \R$ such that the following condition holds for any $i=1,...,q$: 
% \begin{align}
% &L_g\bar L_f^{r_i-1}h_i(t,\vect{x})u +\bar L_f^{r_i}h_i(t,\vect{x})+{\bf a}_i^T\eta_i(t,\vect{x})\ge 0. \label{sharecon}
% \end{align}
% \end{definition}

% Given a CBF $h$ of relative degree $r$, if  $s_k(x(0))\geq 0$ for $k=0,1,...,r$, where $s_k$ are given in \eqref{liftedh}, then any Lipschitz continuous controller $u(x)\in \{u\in U \mid L_g L_f^{r-1}h(x)u +L_f^rh(x)+{\bf a}'\eta(x)  \geq 0,x\in \mathcal{D}\}$ for every $x\in\mathcal{D}$ will guarantee the forward invariance of $\mathcal{C}$ \cite{xu2018constrained}.

\subsection{Second-order Cone Constraints} \label{SOC_OPT} 
A \emph{second-order cone program} (SOCP) is a type of convex optimization problem of the following form \cite{lobo1998applications,alizadeh2003second,boyd2004convex}: 
\begin{align} \label{SOCP_form}
    \min \quad & \vect{f}^T\vect{x}\\
    \text{s.t.}\quad &\|A_i\vect{x}+\vect{b}_i\|_2\leq \vect{c}_i^T\vect{x}+d_i, \quad i=1,\ldots,m,\nonumber
%    &F\vect{x} = \vect{g},\nonumber
\end{align}
where $\vect{x}\in\mathbb{R}^n$ is the optimization variable, and the problem parameters are $\vect{f}\in\R^n,A_i\in\mathbb{R}^{(n_i-1)\times n},\vect{b}_i\in\R^{n_i-1},\vect{c}_i\in\R^{n}$ 
% \VF{Why $n_i-1?$ I think just $n_i$ is sufficient.}\XX{See e.g., \cite{lobo1998applications}.} 
and $d_i\in\R$. Constraints of the form:
\begin{equation} \label{SOC}
    \|A\vect{x}+\vect{b}\|_2\leq \vect{c}^T\vect{x}+d
\end{equation}
are called \emph{second-order cone} (SOC) constraints  of dimension $n$. Many commonly-seen convex constraints can be formulated as SOC; for example, ellipsoidal constraints ($\vect{x}^TQ\vect{x} \leq r,\ Q\succeq0$) and polyhedral constraints ($A\vect{x}\leq \vect{b}$) \cite{lobo1998applications,alizadeh2003second,boyd2004convex}. SOCP encompasses many types of important convex programs as special cases, such as linear programs, convex QPs and convex quadratically constrained QPs. SOCP can be solved in polynomial time via the interior-point
method and off-the-shelf solvers such as MOSEK exist \cite{potra2000interior,mosek}.

% \subsection{Problem Statement}
% The problem that will be studied in this paper is the following:
% \XX{Finish it}

\section{Continuous-Time Set Inclusion of B-spline Curves \& Their Derivatives}\label{sec:Constraint}
In this section, continuous-time convex set inclusion of B-spline curves and their derivatives will be considered. These results provide an intuitive description of the strong convexity property of B-spline curves and their derivatives, which is the key to enforcing the continuous-time state and input constraint satisfaction of quadcopter trajectories.

% constraints. useful to convert the input and state constraints of quadcopters into SOC constraints in Section \ref{sec:traj}. 
% \begin{definition}[$r$-th order Virtual Control Points] \label{virtual_control_points}
% Given a B-spline curve $\vect{r}(t)$ defined as in \eqref{b-spline_curve} with $r=0$, we group its $r$-th order virtual control points into matrix:
% \begin{equation}\label{virtual_cp_eq}
%     P^{(r)} = PB_{r} = \mat{\vect{P}_0^{(r)} & \dots & \vect{P}_{N+r}^{(r)}}
% \end{equation}
% \end{definition}

%The following Lemma is inspired by Lemma 1 in \cite{Constrained_trajectory} and provides an intuitive description of the strong convexity properties of B-Spline curves and their derivatives.
%within the context of Definition \ref{virtual_control_points}. 
% This result will be key to guarantee continuous-time verification of constraints.
\begin{lemma} \label{smooth_strong_convexity}
The $r$-th derivative of a clamped B-spline curve defined as in \eqref{b-spline_curve} is contained within the convex hull of its $r$-th order virtual control points such that:
\begin{equation}
    \vect{s}^{(r)}(t) \in \text{Conv}\{\vect{P}_{0}^{(r)},\dots,\vect{P}_{N+r}^{(r)}\}, \quad t\in [\tau_0,\tau_v).
    \label{convex}
\end{equation} 
% \VF{To be precise, we need to specify time interval $t\in[\tau_0,\tau_v)$ right-side-open because of the recursive definition of B-spline bases (see above \eqref{clamped_uniform}).}
Furthermore, for $d\leq i\leq N$, %we have that:
\begin{equation}        
    \vect{s}^{(r)}(t) \in \text{Conv}\{\vect{P}_{i-d+r}^{(r)},\dots,\vect{P}_{i}^{(r)}\}, \quad  t\in[\tau_i,\tau_{i+1}).
    \label{strong_convex}
\end{equation}
By \eqref{strong_convex}, condition \eqref{convex} can be reduced to:
\begin{equation}
    \vect{s}^{(r)}(t) \in \text{Conv} \{\vect{P}_{r}^{(r)},\dots,\vect{P}_N^{(r)}\}, \quad t\in[\tau_0,\tau_v).
    \label{smooth_strong_convex}
\end{equation}
\end{lemma}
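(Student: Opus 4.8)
The plan is to exploit the fact that $\vect{s}^{(r)}$ is \emph{itself} a clamped B-spline curve, of degree $d-r$, whose control points are exactly the $r$-th order VCPs $\vect{P}_0^{(r)},\dots,\vect{P}_{N+r}^{(r)}$: by \eqref{b-spline_curve}, $\vect{s}^{(r)}(t)=PB_r\vect{\Lambda}_{d-r}(t)=\sum_{j=0}^{N+r}\vect{P}_j^{(r)}\lambda_{j,d-r}(t)$, which is precisely the iterated B-spline differentiation identity (see \cite{deboor} and the construction of $B_r$ in Appendix~\ref{B_mat}). I would first record that differentiating the clamped knot vector \eqref{clamped} lowers the end multiplicities from $d+1$ to $d-r+1$ while leaving the first and last breakpoints $\tau_0,\tau_v$ fixed, so the parameter domain of $\vect{s}^{(r)}$ is still $[\tau_0,\tau_v)$; equivalently $\lambda_{j,d-r}\equiv 0$ for $j\in\{0,\dots,r-1\}\cup\{N+1,\dots,N+r\}$ (their supports collapse to a repeated knot) and $\{\lambda_{j,d-r}\}_{j=r}^{N}$ is the genuine clamped degree-$(d-r)$ basis on $[\tau_0,\tau_v)$. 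Consequently properties P1--P4 apply verbatim to $\vect{s}^{(r)}$, and the lemma reduces to the (strong) convexity statement for this derivative curve.

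Granting this, \eqref{convex} is immediate from P2 applied to the degree-$(d-r)$ basis: $\lambda_{j,d-r}(t)\geq 0$ for every $j$ and $\sum_{j=0}^{N+r}\lambda_{j,d-r}(t)=1$ on $[\tau_0,\tau_v)$, so $\vect{s}^{(r)}(t)$ is a convex combination of $\{\vect{P}_j^{(r)}\}_{j=0}^{N+r}$. For \eqref{strong_convex} I would fix $d\le i\le N$ and $t\in[\tau_i,\tau_{i+1})$ and invoke the local-support property P3 on the degree-$(d-r)$ basis: $\lambda_{j,d-r}(t)\neq 0$ only if $t\in[\tau_j,\tau_{j+d-r+1})$, which for $t\in[\tau_i,\tau_{i+1})$ forces $i-d+r\le j\le i$, i.e. exactly $d-r+1$ active indices; since the remaining basis functions vanish at $t$, partition of unity restricted to these indices expresses $\vect{s}^{(r)}(t)$ as a convex combination of $\vect{P}_{i-d+r}^{(r)},\dots,\vect{P}_{i}^{(r)}$, which is \eqref{strong_convex} (this is just P4 for $\vect{s}^{(r)}$). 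Finally \eqref{smooth_strong_convex}: every $t\in[\tau_0,\tau_v)$ lies in a unique nondegenerate knot span $[\tau_i,\tau_{i+1})$ with $d\le i\le N$ (by the clamped uniform structure of $\vect{\tau}$), and for each such $i$ one has $\{i-d+r,\dots,i\}\subseteq\{r,\dots,N\}$ because $i-d+r\ge r$ and $i\le N$; hence $\text{Conv}\{\vect{P}_{i-d+r}^{(r)},\dots,\vect{P}_{i}^{(r)}\}\subseteq\text{Conv}\{\vect{P}_{r}^{(r)},\dots,\vect{P}_{N}^{(r)}\}$, and a union over $i$ yields \eqref{smooth_strong_convex}.

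I expect the only real difficulty to be bookkeeping rather than conceptual: verifying that the differentiated knot vector stays clamped with the correct end multiplicities (so that the parameter domain and the partition-of-unity interval $[\tau_0,\tau_v)$ are genuinely unchanged), pinning down which VCP indices are active on a given knot span, and checking the containment $\{i-d+r,\dots,i\}\subseteq\{r,\dots,N\}$. If one would rather not re-derive that the derivative of a B-spline is a B-spline, a shorter route is to cite \cite{deboor} together with Appendix~\ref{B_mat} for the B-spline representation of $\vect{s}^{(r)}$ and then invoke P2--P4 directly on it; I would take that route in the write-up.
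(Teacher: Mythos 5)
Your proposal is correct and follows essentially the same route as the paper's proof: write $\vect{s}^{(r)}(t)=\sum_j \lambda_{j,d-r}(t)\vect{P}_j^{(r)}$, invoke P2 (partition of unity) for \eqref{convex}, P3 (local support) for \eqref{strong_convex}, and take the union over nonempty knot spans for \eqref{smooth_strong_convex}. The extra bookkeeping you supply on the differentiated knot vector and the vanishing of the first and last $r$ terms is consistent with the paper's remark that the first and last $r$ columns of $P^{(r)}$ are zero, and your index computation $\{i-d+r,\dots,i\}\subseteq\{r,\dots,N\}$ checks out.
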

\begin{proof}
Applying 
%Definition \ref{virtual_control_points}
the definition of VCP to \eqref{b-spline_curve} we obtain:
\begin{equation}
    \vect{s}^{(r)}(t) = P^{(r)}\vect{\Lambda}_{d-r}(t) = \sum_{i=0}^{N+r} \lambda_{i,d-r}(t)\vect{P}^{(r)}_i.
    \label{lin_comb}
\end{equation}
Applying P2) of B-splines shown in Section \ref{B-Splines}, we determine that $\vect{s}^{(r)}(t)$ is a convex combination of $r$-th order VCPs, which is equivalent to \eqref{convex}. Using P3) of B-splines we have %$\sum_{j=i-d+r}^{i}\lambda_{j,d-r}(t)=1,  t\in[\tau_i,\tau_{i+1}), d\leq i \leq N.$ 
\begin{equation*}
    \sum_{j=i-d+r}^{i}\lambda_{j,d-r}(t)=1,\quad  t\in[\tau_i,\tau_{i+1}), \quad d\leq i \leq N.
\end{equation*}
Leveraging this result, \eqref{lin_comb} reduces to:
\begin{equation*}
    \vect{s}^{(r)}(t) = \sum_{j=i-d+r}^{i} \lambda_{j,d-r}(t)\vect{P}^{(r)}_j, \forall t\in [\tau_i,\tau_{i+1}), d\leq i\leq N,
\end{equation*}
which is equivalent to \eqref{strong_convex}. Taking the union of all nonempty knot segments in the knot vector $\vect{\tau}$ results in \eqref{smooth_strong_convex}. %This completes the proof.
\end{proof}

\begin{proposition} \label{inclusion_prop}
Given a convex set $\mathcal{S}$ and the $r$-th derivative of a clamped B-spline curve $\vect{s}^{(r)}(t)$ defined as in \eqref{b-spline_curve}, if
\begin{equation}
     %\mbox{i)}\quad \quad
     \vect{P}_{j}^{(r)} \in \mathcal{S}, \quad j=r,\ldots,N
    \label{inclusion_global}
\end{equation}
holds, then $\vect{s}^{(r)}(t) \in \mathcal{S}, \ t\in[\tau_0,\tau_v)$. If
\begin{equation}
    %\mbox{ii)}\quad\quad
    \vect{P}^{(r)}_j \in S, \quad j=i-d+r,\dots,i, \quad i\in\{d,\ldots,N\}
    \label{inclusion_local}
\end{equation}
holds, then $\vect{s}^{(r)}(t)\in \mathcal{S}, \ t\in[\tau_i,\tau_{i+1})$.
\end{proposition}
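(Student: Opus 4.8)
The plan is to derive the proposition directly from Lemma~\ref{smooth_strong_convexity}, using only the elementary fact that a convex set which contains finitely many points also contains their convex hull. No new machinery about B-splines is needed beyond the strong-convexity inclusions \eqref{strong_convex} and \eqref{smooth_strong_convex} already established.

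For the global claim, I would start from \eqref{smooth_strong_convex}, which gives $\vect{s}^{(r)}(t) \in \text{Conv}\{\vect{P}^{(r)}_{r},\dots,\vect{P}^{(r)}_{N}\}$ for all $t\in[\tau_0,\tau_v)$. Concretely, this means that for each such $t$ there exist nonnegative coefficients $\mu_r(t),\dots,\mu_N(t)$ summing to one with $\vect{s}^{(r)}(t)=\sum_{j=r}^{N}\mu_j(t)\vect{P}^{(r)}_j$; these are exactly the surviving entries of $\vect{\Lambda}_{d-r}(t)$ appearing in \eqref{lin_comb}. Under hypothesis \eqref{inclusion_global} every $\vect{P}^{(r)}_j$ with $j=r,\dots,N$ lies in $\mathcal{S}$, so convexity of $\mathcal{S}$ forces the convex combination $\vect{s}^{(r)}(t)$ into $\mathcal{S}$; since $t$ was arbitrary in $[\tau_0,\tau_v)$, this proves the first assertion.

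For the local claim, I would repeat the argument on a single nonempty knot interval: fix $i\in\{d,\dots,N\}$ and invoke \eqref{strong_convex}, which confines $\vect{s}^{(r)}(t)$ for $t\in[\tau_i,\tau_{i+1})$ to $\text{Conv}\{\vect{P}^{(r)}_{i-d+r},\dots,\vect{P}^{(r)}_{i}\}$. Hypothesis \eqref{inclusion_local} places precisely these $d-r+1$ virtual control points in $\mathcal{S}$, so the same convexity argument yields $\vect{s}^{(r)}(t)\in\mathcal{S}$ on that interval.

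There is no genuine obstacle here; the proposition is essentially a restatement of Lemma~\ref{smooth_strong_convexity} in the language of set inclusion. The only thing to be careful about is index bookkeeping: one must check that the index set $\{r,\dots,N\}$ in \eqref{inclusion_global} coincides with the virtual control points appearing in \eqref{smooth_strong_convex} (in particular that the first and last $r$ all-zero virtual control points, noted after Definition~\ref{virtual_control_points_old}, play no role), and likewise that $\{i-d+r,\dots,i\}$ matches the hull in \eqref{strong_convex}. It is also worth stating explicitly that the intervals are half-open, so that the partition-of-unity identities used in Lemma~\ref{smooth_strong_convexity} hold pointwise on them.
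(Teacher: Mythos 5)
Your proposal is correct and follows essentially the same route as the paper: both derive the global claim from \eqref{smooth_strong_convex} and the local claim from \eqref{strong_convex}, then apply the fact that a convex set containing the relevant virtual control points contains any convex combination of them. Your version merely spells out the convex-combination coefficients and the index bookkeeping more explicitly than the paper does.
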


% \begin{proposition} \label{inclusion_prop}
% Given a convex set $S$ and the $r$-th derivative of a clamped B-spline curve defined as in \eqref{b-spline_curve}, a sufficient condition for $\vect{s}^{(r)}(t) \in S$ for all
% \begin{itemize}
%     \item[i)] $t\in[\tau_0,\tau_v)$
% is that the following holds:
% \begin{equation}
%     \vect{P}_{j}^{(r)} \in S, \quad j=r,\ldots,N,
%     \label{inclusion_global}
% \end{equation}
% \item[ii)] $t\in[\tau_i,\tau_{i+1})$ with $i=d,\ldots,N$, is that the following holds:
% \begin{equation}
%     \vect{P}^{(r)}_j \in S, \quad j=i-d+r,\dots,i.
%     \label{inclusion_local}
% \end{equation}
% \end{itemize}
% \end{proposition}
\begin{proof}
     From \eqref{smooth_strong_convex} and the convexity property, it follows that containing $\vect{s}^{(r)}(t)$ in a convex set $\mathcal{S}$ for $t\in[\tau_0,\tau_v)$ can be achieved by containing $\vect{P}_j^{(r)}$ in $\mathcal{S}$ for $j=r,\ldots,N$. This results in \eqref{inclusion_global}. Furthermore, containing $\vect{s}^{(r)}(t)$ in $\mathcal{S}$ for a nonempty time interval $[\tau_i,\tau_{i+1})$ reduces to containing $d-r+1$ $r$th order VCPs in $\mathcal{S}$ (i.e., those that form the convex hull in \eqref{strong_convex}). This is equivalent with \eqref{inclusion_local}.
\end{proof}

\begin{remark}
Proposition \ref{inclusion_prop} above generalizes Proposition 1 in \cite{Constrained_trajectory} from an interval set inclusion to a more general convex set inclusion. This will be key when considering spherical and conic sets to ensure the safety of quadcopters as will be shown in the following sections.
\end{remark}

\section{Trajectory Planning with Continuous-Time Safety Guarantee} \label{sec:traj}
This section presents a SOCP framework to solve a safe trajectory planning problem that includes various state and input safety constraints, such that the reference trajectory  satisfies the safety constraints rigorously in the continuous-time sense. 
% This section investigates how state and input safety constraints can be converted into convex constraints in terms of a B-spline curve's control points, and how a SOCP can be formulated to generate a reference trajectory such that all the safety constraints are satisfied in continuous-time. 
%safe trajectories in continuous-time can be generated by solving a SOCP. %Proposition \ref{inclusion_prop} is used to generate trajectories that satisfy state and input constraints in continuous time. 
Since the flat output $\psi$ is irrelevant to the state and input constraints considered in this paper, we set it to zero and limit ourselves to finding a sufficiently smooth, position flat output trajectory defined as
$$
\vect{r}(t) = \mat{x(t) & y(t) & z(t)}^T.
$$ 

\subsection{Encoding Safety Specifications as SOC Constraints}
Consider the following state and input safety constraints that have to be satisfied rigorously in continuous-time:
\begin{subequations} \label{state_input_const}
\begin{align}
  \vect{x}(t) &\in\mathcal{X} = \mathcal{X}_{p} \cap \mathcal{X}_{v} \cap\mathcal{X}_{\xi},\\
  \vect{u}(t) &\in \mathcal{U} = \mathcal{U}_T\cap\mathcal{U}_{\omega},
\end{align}
\end{subequations}
where the state $\vect{x}(t)$ and input $\vect{u}(t)$ are defined in \eqref{state_input}, and $\mathcal{X}_{p},\mathcal{X}_{v},\mathcal{X}_{\xi},\mathcal{U}_T,\mathcal{U}_{\omega}$ are the constraint sets for the position, linear velocity, roll and pitch angles, total thrust, and angular velocity, respectively. For notation simplicity we first consider \eqref{state_input_const} over the entire time interval $t\in[\tau_0,\tau_v)$. The procedure will be to formulate each constraint in flat space and then use Proposition \ref{inclusion_prop} to derive finitely many sufficient conditions for \eqref{state_input_const}. In Section \ref{subavoidance}, we will discuss how to use Proposition \ref{inclusion_prop} to achieve obstacle avoidance by satisfying the constraints shown in \eqref{state_input_const} on sub-intervals instead. Apart from the continuous-time safety constraints, we will also consider waypoint constraints that have to be satisfied at certain discrete time instances.
\subsubsection{Position Constraints $\vect{x}\in\mathcal{X}_p$}
The position constraint set $\mathcal{X}_p$ encodes limited flight volumes as follows:
\begin{equation}\label{const_pos}
    \mathcal{X}_{p} = \{\vect{x}\in\mathbb{R}^9\mid\vect{r} \in \mathcal{K}_p\},
\end{equation}
where $\mathcal{K}_p$ is a given SOC.
% as in \eqref{SOC}. 
%\XX{For example, when $K_p$ is chosen as...Note that polytopes are a special case of SOC...}\VF{In Section \ref{SOC_OPT} I already mention that Polytopes and Ellipsoids are special cases of $K_p$.} 
Proposition \ref{inclusion_prop} immediately provides sufficient conditions for $\vect{x}(t)\in\mathcal{X}_p,\ t\in[\tau_0,\tau_v)$:
\begin{equation}\label{cp_pos}
    \vect{P}_{j} \in \mathcal{K}_p, \quad j=0,\dots,N.
\end{equation}
Note that we restrict ourselves to SOC sets to preserve the overall SOCP structure that will arise from the rest of the constraints. %Note also that polytopes are a special case of SOC.

\subsubsection{Linear Velocity Constraints $\vect{x}\in\mathcal{X}_v$} Limiting flight velocities can  reduce the risk of injury or equipment damage. Consider the linear velocity constraint set defined as:
\begin{equation}\label{const_vel}
    \mathcal{X}_{v} = \{\vect{x}\in\mathbb{R}^{9}:\|\dot{\vect{r}}\|_2\leq \overline{v}\},
\end{equation} 
% \VF{I think having $\vect{r}(t)$ the $t$ time dependency in the set definition is incorrect? 
% \begin{equation} 
% % \label{const_vel}
%     \mathcal{X}_{v} = \{\vect{x}\in\mathbb{R}^{9}\mid\|\dot{\vect{r}}\|_2\leq \overline{v}\},
% \end{equation}
% }
where $\overline{v}\geq0$ denotes the allowable bound of the maximal speed. Using Proposition 1, the constraint $\vect{x}(t)\in\mathcal{X}_v, \ t\in[\tau_0,\tau_v)$ can be formulated as a constraint on the $1$-st order VCPs: 
$$
\|\vect{P}_j^{(1)}\|_2 \leq \overline{v},\ j=1,\ldots,N,
$$ 
which can be expressed equivalently as a SOC constraint on the control points:
\begin{equation} \label{cp_vel}
    \bigg\|\sum_{i=0}^N\vect{P}_ib_{1,i+1,j+1}\bigg\|_2\leq \overline{v}, \quad j=1,\ldots,N,
    % |\vect{P}_j^{(1)}| \leq \overline{v},\quad j=1,\ldots,N.
\end{equation}
where $b_{r,i,j}$ is the $(i,j)$-th entry of matrix $B_r$.

% [copied] Note that the $j$-th $r$-th order VCP is a linear combination of the decision variables:
% \begin{equation}
%     tj)$-th entry of matrix $B_r$.

% \XX{Explain how \eqref{cp_vel} is a SOC constraint on $\vect{P}$. Similarly for all the following constraints for VCPs.}

\subsubsection{Angular Constraints $\vect{x}\in\mathcal{X}_{\xi}$}  
Providing bounds for the roll and pitch angles can enhance system stability and improve the accuracy of the dynamics model \eqref{dyn_model} by limiting aerodynamic effects. This is especially relevant when implementing controllers that leverage linearizations (e.g., LQR or linear MPC) or small-angle approximations. Consider the angular constraints set defined as follows:
\begin{equation} \label{const_ang}
    \mathcal{X}_{\xi}=\{\vect{x}\in\mathbb{R}^9:|\phi|,|\theta| \leq \epsilon\},
\end{equation}
where $\epsilon< \pi/2$ is a given bound for the roll and pitch angles. We begin by exposing the geometric meaning of a result from literature on quadcopter differential flatness. % \cite{effective_ang}: 
\begin{lemma}\label{epsilon_cone_lemma} 
%\VF{Changed this a bit to be in SOC format because it comes after SOC section}  
Given a positive scalar $0<\epsilon<\pi/2$, and the following SOC:
\begin{align}\label{ang_bounds_cone}
    \mathcal{K}_{\epsilon} &\triangleq \{\vect{p}\in\mathbb{R}^3:\|A_{\epsilon}\vect{p}\|_2\leq p_3+g\},\\
    A_{\epsilon} &=|\cot\epsilon|\mat{\vect{x}_W & \vect{y}_W & \vect{0}_{3\times 1}},\nonumber
\end{align}
if the acceleration signal $\ddot{\vect{r}}(t)\in\mathcal{K}_{\epsilon}$,
% satisfies
% \begin{equation}\label{ang_bounds_cone}
%     \ddot{x}^2(t)\cos^2\epsilon+\ddot{y}^2(t)\cos^2\epsilon-\sin^2\epsilon(\ddot{z}(t)+g)^2\leq 0,\;\forall t,
% \end{equation}
then $|\phi(t)|,|\theta(t)|\leq \epsilon$ regardless of the yaw angle $\psi(t)$.
\end{lemma}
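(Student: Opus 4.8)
The plan is to connect the acceleration vector $\ddot{\vect{r}}(t)$ to the body $z$-axis $\vect{z}_B$ through the flatness map, and then relate the tilt of $\vect{z}_B$ away from $\vect{z}_W$ to bounds on the roll and pitch angles. From \eqref{dyn_model_acc} and the intermediate quantities in the flatness expressions, $\vect{T}(t)=\ddot{\vect{r}}(t)+g\vect{z}_W=\mat{\ddot x & \ddot y & \ddot z+g}^T$ and $\vect{z}_B=\vect{T}/\|\vect{T}\|_2$. Writing $\vect{p}=\ddot{\vect{r}}$, the third component of $\vect{T}$ is exactly $p_3+g$ and the horizontal part of $\vect{T}$ is $\mat{\vect{x}_W & \vect{y}_W}^T\vect{p}$, so the cone membership $\vect{p}\in\mathcal{K}_\epsilon$ says precisely that $|\cot\epsilon|\cdot\|\vect{T}_{xy}\|_2\le T_z$, i.e. the angle between $\vect{T}$ (equivalently $\vect{z}_B$) and $\vect{z}_W$ is at most $\epsilon$. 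Concretely, $\tan(\angle(\vect{z}_B,\vect{z}_W)) = \|\vect{T}_{xy}\|_2/T_z \le \tan\epsilon$, using $T_z=p_3+g>0$ (which the cone constraint forces, since the right-hand side must be nonnegative).

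Next I would translate "the tilt angle of $\vect{z}_B$ is at most $\epsilon$" into the individual bounds $|\phi|,|\theta|\le\epsilon$. The key identity is that the third row of the rotation matrix $R_B^W$ in \eqref{dyn_model} is $\vect{z}_B = \mat{-s\theta & s\phi c\theta & c\phi c\theta}^T$, so $\vect{z}_W^T\vect{z}_B = c\phi c\theta$, and hence $\cos(\angle(\vect{z}_B,\vect{z}_W)) = \cos\phi\cos\theta$. From the flatness formulas, $\theta=-\sin^{-1}(\vect{z}_W^T\vect{x}_B)$, so $\sin\theta = -\vect{z}_W^T\vect{x}_B$ and $|\sin\theta|=|{z}_B\text{-component reasoning}|$; more directly, $|s\theta|$ equals the magnitude of the first component of $\vect{z}_B$. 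Since that component has absolute value at most $\|\vect{T}_{xy}\|_2/\|\vect{T}\|_2 = \sin(\angle(\vect{z}_B,\vect{z}_W)) \le \sin\epsilon$, we get $|\theta|\le\epsilon$. For $\phi$: from $c\phi c\theta = \vect{z}_W^T\vect{z}_B = \cos(\angle(\vect{z}_B,\vect{z}_W)) \ge \cos\epsilon$, and since $|\cos\theta|\le 1$, we get $\cos\phi \ge \cos\epsilon/|\cos\theta| \ge \cos\epsilon$, hence $|\phi|\le\epsilon$. The yaw $\psi$ never enters because $\vect{z}_B$ is independent of $\psi$ (the flatness construction rotates $\vect{x}_B,\vect{y}_B$ about $\vect{z}_B$ with $\psi$, leaving $\vect{z}_B$ fixed).

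The main obstacle I anticipate is making the geometry airtight: one must verify that $\vect{z}_B$ as produced by the flatness map \eqref{flat_map} genuinely coincides with the third column of $R_B^W$ under the Z-Y-X Euler convention used in \eqref{dyn_model} — i.e. that the flatness inversion is consistent with the stated rotation matrix — and that the degenerate cases are handled (e.g. $\vect{y}_C\times\vect{z}_B$ nonzero, which is guaranteed when $|\theta|<\pi/2$, and $\|\vect{T}\|_2\ne 0$, guaranteed by $T_z=p_3+g\ge |\cot\epsilon|\,\|\vect{T}_{xy}\|_2$ together with $\epsilon<\pi/2$ so $T_z>0$ unless $\vect{T}=\vect 0$, which cannot happen since $T_z>0$). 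A secondary subtlety is the case $\epsilon$ near $\pi/2$ where $\cot\epsilon\to 0$ and $A_\epsilon\to 0$, making $\mathcal{K}_\epsilon$ the half-space $p_3+g\ge 0$; the argument above still gives $|\phi|,|\theta|<\pi/2$, consistent with the claim. Once the identification of $\vect{z}_B$ is pinned down, the rest is the short trigonometric chain sketched above.
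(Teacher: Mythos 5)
Your overall strategy is sound and is genuinely different from the paper's: the paper expands the SOC into $\cot^2\epsilon\,(\ddot x^2+\ddot y^2)\leq(\ddot z+g)^2$ and then delegates the trigonometry entirely to Proposition 2 of the cited reference \cite{effective_ang}, whereas you attempt a self-contained geometric argument via the tilt of $\vect{z}_B$. That is a legitimate and arguably more informative route, and your treatment of $\phi$ (from $\cos\phi\cos\theta=\vect{z}_W^T\vect{z}_B\geq\cos\epsilon$ and $\cos\theta\leq 1$) is correct once $|\theta|\leq\epsilon<\pi/2$ is in hand.

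However, the step that delivers $|\theta|\leq\epsilon$ contains a concrete error. You assert that the third \emph{row} of $R_B^W$ is $\vect{z}_B=\mat{-s\theta & s\phi c\theta & c\phi c\theta}^T$ and that $|\sin\theta|$ equals the magnitude of the first component of $\vect{z}_B$. Neither is true: $\vect{z}_B$ is the third \emph{column}, namely $\mat{s\phi s\psi+c\phi s\theta c\psi & c\phi s\theta s\psi-s\phi c\psi & c\phi c\theta}^T$, whose first component is not $\pm\sin\theta$ (take $\psi=\pi/2$: it equals $\sin\phi$). The entry $-s\theta$ is the third component of $\vect{x}_B$, consistent with the flatness formula $\theta=-\sin^{-1}(\vect{z}_W^T\vect{x}_B)$. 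The repair is short but not free: with $\vect{z}_B=(z_1,z_2,z_3)$ and $\vect{y}_C=(-\sin\psi,\cos\psi,0)^T$, the flatness construction gives $\vect{y}_C\times\vect{z}_B=\big(z_3\cos\psi,\;z_3\sin\psi,\;-(z_1\cos\psi+z_2\sin\psi)\big)$, hence
\begin{equation*}
|\sin\theta|=\frac{|z_1\cos\psi+z_2\sin\psi|}{\sqrt{z_3^2+(z_1\cos\psi+z_2\sin\psi)^2}}\leq\frac{\sqrt{z_1^2+z_2^2}}{\sqrt{z_3^2+z_1^2+z_2^2}}=\sqrt{z_1^2+z_2^2}\leq\sin\epsilon,
\end{equation*}
using Cauchy--Schwarz, the monotonicity of $u\mapsto u/\sqrt{z_3^2+u^2}$, $\|\vect{z}_B\|_2=1$, and the cone bound on the horizontal part of $\vect{z}_B$. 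With that substitution (and your already-noted caveat that the cone forces $\ddot z+g>0$ whenever $\vect{T}\neq\vect{0}$, so $\vect{z}_B$ is well defined), the argument closes and the independence from $\psi$ is manifest, since the bound holds uniformly over $\psi$.
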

\begin{proof} 
Let $\ddot{\vect{r}}(t)\in\mathcal{K}_{\epsilon}$ and expand the SOC \eqref{ang_bounds_cone} to arrive at:
\begin{equation}
    \ddot{x}^2(t)\cot^{2}\epsilon + \ddot{y}^2(t)\cot^2\epsilon \leq (\ddot{z}(t)+g)^2.
\end{equation}
% which represents inclusion in a conic surface in $\ddot{\vect{r}}$ coordinates with apex at $(0,0,-g)$ opening along positive $\ddot{z}$.
% \begin{equation}
%     \ddot{x}^2(t)\cos^2\epsilon+\ddot{y}^2(t)\cos^2\epsilon-\sin^2\epsilon(\ddot{z}(t)+g)^2\leq 0.
% \end{equation}
Let $k_1(t)\triangleq \ddot{x}(t)$, $k_2(t)\triangleq \ddot{y}(t)$ and $k_3(t) \triangleq \ddot{z}(t)+g$. Dividing the inequality above by $\cot^2{\epsilon}\neq 0$ yields:
\begin{equation*}
    \frac{k_1^2(t)+k_2^2(t)}{k_3^2(t)}\leq \tan^2\epsilon.
\end{equation*}
Then, the conclusion follows from  Proposition 2 in \cite{effective_ang}.
\end{proof}
% \XX{Change the following into the form of a lemma and a proof. Also, in each case below, show in the proof which type of SOC the constraint derived is. }
Note that \eqref{ang_bounds_cone} describes the interior of a conic surface in $\ddot{\vect{r}}$ coordinates with apex at $(0,0,-g)$. We have the following result for angular constraints satisfaction.
\begin{lemma}
Consider B-spline curve $\vect{r}(t)$ defined as in \eqref{b-spline_curve}. If
\begin{align} \label{cp_ang}
    \bigg\|A_{\epsilon}\sum_{i=0}^N\vect{P}_ib_{2,i+1,j+1}\bigg\|_2 \leq g + \sum_{i=0}^N\vect{z}_W^T\vect{P}_ib_{2,i+1,j+1}
\end{align}
holds for $j=2,\ldots,N$ with $A_{\epsilon}$ given as in \eqref{ang_bounds_cone}, then $\vect{x}(t)\in\mathcal{X}_{\xi}, \forall  t\in[\tau_0,\tau_v)$.
\end{lemma}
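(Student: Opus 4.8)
The plan is to obtain the result as a direct corollary of Lemma \ref{epsilon_cone_lemma} and Proposition \ref{inclusion_prop}. First I would observe that, by Lemma \ref{epsilon_cone_lemma}, proving the angular bounds $|\phi(t)|,|\theta(t)|\le\epsilon$ — equivalently $\vect{x}(t)\in\mathcal{X}_{\xi}$ in the sense of \eqref{const_ang} — reduces entirely to showing that the acceleration signal satisfies $\ddot{\vect{r}}(t)\in\mathcal{K}_{\epsilon}$ for every $t\in[\tau_0,\tau_v)$, with $\mathcal{K}_{\epsilon}$ the second-order cone defined in \eqref{ang_bounds_cone}; the yaw angle $\psi(t)$ plays no role here, which is consistent with having set $\psi\equiv 0$.

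Next I would use two facts. First, $\mathcal{K}_{\epsilon}$ is a second-order cone and hence a convex set, so Proposition \ref{inclusion_prop} is applicable to it. Second, $\ddot{\vect{r}}(t)=\vect{r}^{(2)}(t)$ is precisely the second-order derivative of the clamped B-spline curve $\vect{r}(t)$ of \eqref{b-spline_curve}. Invoking the global set-inclusion condition \eqref{inclusion_global} of Proposition \ref{inclusion_prop} with $r=2$ and $\mathcal{S}=\mathcal{K}_{\epsilon}$, it therefore suffices to verify that the second-order virtual control points satisfy $\vect{P}_j^{(2)}\in\mathcal{K}_{\epsilon}$ for $j=2,\ldots,N$.

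The final step is a bookkeeping identification. By Definition \ref{virtual_control_points_old} and \eqref{virtual_cp_eq}, $\vect{P}_j^{(2)}$ is the $(j+1)$-th column of $P^{(2)}=PB_2$, so $\vect{P}_j^{(2)}=\sum_{i=0}^N\vect{P}_i\,b_{2,i+1,j+1}$. Substituting this expression into the cone-membership condition $\|A_{\epsilon}\vect{P}_j^{(2)}\|_2\le (\vect{P}_j^{(2)})_3+g$, and using that the third component of a vector equals $\vect{z}_W^T$ applied to it, reproduces exactly the hypothesis \eqref{cp_ang} for $j=2,\ldots,N$. Chaining the three steps then gives $\ddot{\vect{r}}(t)\in\mathcal{K}_{\epsilon}$ and hence $\vect{x}(t)\in\mathcal{X}_{\xi}$ for all $t\in[\tau_0,\tau_v)$.

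I do not expect any genuine analytic obstacle; the only care required is in the index matching — aligning the virtual-control-point index $j$ with the row/column indices of $B_2$, and checking that the range $j=2,\ldots,N$ delivered by Proposition \ref{inclusion_prop} coincides with the range in \eqref{cp_ang} (the first and last two columns of $P^{(2)}$ being zero, as noted after Definition \ref{virtual_control_points_old}, is consistent with this and does not impose additional constraints). If one wanted the interval-wise version for obstacle avoidance later, the same argument with the local condition \eqref{inclusion_local} would apply verbatim.
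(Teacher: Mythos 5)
Your proof is correct and follows essentially the same route as the paper's: identify the sums in \eqref{cp_ang} as the second-order virtual control points, apply Proposition \ref{inclusion_prop} with the convex set $\mathcal{K}_{\epsilon}$ to get $\ddot{\vect{r}}(t)\in\mathcal{K}_{\epsilon}$ on $[\tau_0,\tau_v)$, and conclude via Lemma \ref{epsilon_cone_lemma}. The only difference is presentational (you argue backward from the goal while the paper argues forward from the hypothesis), and your index bookkeeping matches the paper's.
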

% \begin{lemma}
% Sufficient conditions for $\vect{x}(t)\in\mathcal{X}_{\xi}, \ t\in[\tau_0,\tau_v)$ are:
% \begin{align} \label{cp_ang}
%     \bigg\|A_{\epsilon}\sum_{i=0}^N\vect{P}_ib_{2,i+1,j+1}\bigg\|_2 \leq g + \sum_{i=0}^N\vect{z}_W^T\vect{P}_ib_{2,i+1,j+1},\\
%   \mbox{with}\; A_{\epsilon}=\sqrt{\frac{1-\sin^2\epsilon}{\sin^2\epsilon}}\mat{1 & 0 & 0\\ 0 & 1 & 0 \\ 0 & 0 & 0}, \; j=2,\dots,N.\nonumber
% \end{align}
% \end{lemma}
% \VF{If I remove the sums, the formulation would be:
% \begin{align}
%     \bigg\|A_{\epsilon}P\vect{B}_{2,j}\bigg\|_2 \leq g + P\vect{B}_{2,j}, \quad j=2,\dots,N,\\
%     A_{\epsilon}=\sqrt{\frac{1-\sin^2\epsilon}{\sin^2\epsilon}}\mat{1 & 0 & 0\\ 0 & 1 & 0 \\ 0 & 0 & 0},\nonumber
% \end{align} Where $\vect{B}_{r,j}$ is the $j$-th column of $B_r$. However $P$ is not the decision variable exactly, but instead its columns $\vect{P}_i$.}

\begin{proof} Applying the definition of $2$nd order VCPs given in \eqref{virtual_cp_eq} to condition \eqref{cp_ang} yields:
\begin{equation} \label{vcp_ang}
\|A_{\epsilon}\vect{P}_j^{(2)}\|_2 \leq \vect{z}_W^T\vect{P}_j^{(2)}+g, \quad j=2,\dots,N.
\end{equation}
By Proposition \ref{inclusion_prop}, we have:
\begin{equation}\label{lem3ineq2}
    \|A_{\epsilon}\vect{r}^{(2)}(t)\|_2\leq \vect{z}_W^T\vect{r}^{(2)}(t) + g, \quad t\in[\tau_0,\tau_v).
\end{equation}
Since condition \eqref{lem3ineq2} is equivalent to $\ddot{\vect{r}}(t)\in\mathcal{K}_{\epsilon}$ with $\mathcal{K}_{\epsilon}$ given as in \eqref{ang_bounds_cone}, it follows from Lemma \ref{epsilon_cone_lemma} that $|\phi(t)|,|\theta(t)|\leq \epsilon$, $\forall t\in[\tau_0,\tau_v)$, which means that $\vect{x}(t)\in\mathcal{X}_{\xi}, \forall  t\in[\tau_0,\tau_v)$.
% \begin{equation}
%     \ddot{x}^2(t)\cos^2\epsilon+\ddot{y}^2(t)\cos^2\epsilon-\sin^2\epsilon(\ddot{z}(t)+g)^2\leq 0.
% \end{equation}
% Finally, we conclude from Lemma \ref{epsilon_cone_lemma} that the roll and pitch are bounded by this cone independently of the yaw angle, i.e., $|\phi(t)|,|\theta(t)|\leq \epsilon, \ t\in[\tau_0,\tau_v)$.
%\XX{It is not a direct result of Proposition 2 in \cite{effective_ang}. Give the detailed proof here to explain how the conclusion $|\phi(t)|,|\theta(t)|\leq \epsilon, \ t\in[\tau_0,\tau_v)$ can be derived.}\VF{I added a Lemma about this in the preliminaries because I use this cone bound also in a later section when I prove input saturation safeguard.}
% Finally, we recover from Proposition 2 in \cite{effective_ang} that the roll and pitch are bounded by this cone independently of the yaw angle to conclude $|\phi(t)|,|\theta(t)|\leq \epsilon, \ t\in[\tau_0,\tau_v)$.
\end{proof} 

% \begin{proof} \VF{Old backwards reasoning}
% In \cite{effective_ang}, it was shown that the roll and pitch angles are bounded independently of the yaw angle leveraging the differential flatness map. By this result, the constraints $|\phi|,|\theta| \leq \epsilon$ reduce to verifying:
% \begin{equation}\label{ang_bounds_cone}
%     (1-\sin^2\epsilon)\ddot{x}^2+(1-\sin^2\epsilon)\ddot{y}^2-\sin^2\epsilon(\ddot{z}+g)^2\leq 0.
% \end{equation}
% Note that \eqref{ang_bounds_cone} describes the region inside a conic surface in $\ddot{\vect{r}}$ coordinates with axis aligned with the $\ddot{z}$ coordinate and apex at $\mat{0&0&-g}^T$. By Proposition \ref{inclusion_prop},  \eqref{ang_bounds_cone} holds by enforcing the $2$-nd order VCPs to lie within the following  cone:
% \begin{equation} \label{vcp_ang}
% \|A_{\epsilon}\vect{P}_j^{(2)}\|_2 \leq \vect{z}_W^T\vect{P}_j^{(2)}+g, \quad j=2,\dots,N,
% \end{equation}
% with $A_\epsilon$ defined as in the Lemma. Applying the definition of $\vect{P}_j^{(2)}$ results in \eqref{cp_ang}.
% \end{proof}

\subsubsection{Thrust Constraints $\vect{u}\in\mathcal{U}_{T}$} 
Upper-bounding the total thrust is needed to prevent actuator saturation, while lower-bounding the total thrust is common in aerospace systems, e.g., to facilitate  soft-landing  \cite{accikmecse2011lossless}. Consider the thrust constraints  set as follows:
\begin{equation}\label{const_T}
    \mathcal{U}_T = \{\vect{u}\in\mathbb{R}^4\mid\underline{T}\leq T \leq \overline{T}\},
\end{equation}
where $\underline{T},\overline{T}$ are given constants satisfying $0\leq \underline{T}\leq g\leq \overline{T}$.
\begin{lemma}\label{lemthrust} 
Consider B-spline curve $\vect{r}(t)$ defined as in \eqref{b-spline_curve}. If
\begin{subequations}\label{cp_thrust}
\begin{align}
    \bigg\|\sum_{i=0}^N\vect{P}_ib_{2,i+1,j+1} + g\vect{z}_W\bigg\|_2\leq \overline{T}, \quad j=2,\ldots,N, \label{cp_Tmax}\\
    \sum_{i=0}^N\vect{z}_W^T\vect{P}_ib_{2,i+1,j+1}\geq \underline{T}-g, \quad j= 2,\ldots,N, \label{cp_Tmin}
\end{align}
\end{subequations}
hold, then $\vect{u}(t) \in \mathcal{U}_T, \; \forall t\in[\tau_0,\tau_v)$.
\end{lemma}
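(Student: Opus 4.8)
The plan is to use the flatness expression $T(t) = \|\ddot{\vect{r}}(t) + g\vect{z}_W\|_2$ from \eqref{flat_T_map}, rewrite each of the two bounds defining $\mathcal{U}_T$ as a \emph{convex} set-inclusion constraint on $\ddot{\vect{r}}(t) = \vect{r}^{(2)}(t)$, and then invoke Proposition \ref{inclusion_prop} with $r = 2$ to reduce each to finitely many conditions on the second-order virtual control points $\vect{P}_j^{(2)}$, which expand into control-point conditions via $\vect{P}_j^{(2)} = \sum_{i=0}^N \vect{P}_i b_{2,i+1,j+1}$ (Definition \ref{virtual_control_points_old}).

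For the upper bound, $T(t) \leq \overline{T}$ is equivalent to $\vect{r}^{(2)}(t) \in \mathcal{S}_{\overline{T}}$, where $\mathcal{S}_{\overline{T}} \triangleq \{\vect{p} \in \R^3 : \|\vect{p} + g\vect{z}_W\|_2 \leq \overline{T}\}$ is the (convex) ball of radius $\overline{T}$ centered at $-g\vect{z}_W$. Proposition \ref{inclusion_prop} applied to $\vect{s} = \vect{r}$, $r = 2$, $\mathcal{S} = \mathcal{S}_{\overline{T}}$ then gives that $\vect{P}_j^{(2)} \in \mathcal{S}_{\overline{T}}$ for $j = 2,\ldots,N$ --- equivalently \eqref{cp_Tmax} after the expansion above --- is sufficient for $\vect{r}^{(2)}(t) \in \mathcal{S}_{\overline{T}}$, hence for $T(t) \leq \overline{T}$, on $[\tau_0,\tau_v)$.

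The lower bound $T(t) \geq \underline{T}$ is the one nontrivial point: the set $\{\vect{p} : \|\vect{p} + g\vect{z}_W\|_2 \geq \underline{T}\}$ is nonconvex, so Proposition \ref{inclusion_prop} does not apply to it directly and we must pass to a convex inner approximation. Since $\|\vect{v}\|_2 \geq \vect{z}_W^T\vect{v}$ for all $\vect{v} \in \R^3$, the half-space $\mathcal{S}_{\underline{T}} \triangleq \{\vect{p} : \vect{z}_W^T(\vect{p} + g\vect{z}_W) \geq \underline{T}\} = \{\vect{p} : \vect{z}_W^T\vect{p} \geq \underline{T} - g\}$ is contained in it; in words, it is the sufficient condition $\ddot{z}(t) + g \geq \underline{T}$ on the vertical acceleration. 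Applying Proposition \ref{inclusion_prop} to the convex set $\mathcal{S}_{\underline{T}}$ yields the sufficient condition $\vect{z}_W^T\vect{P}_j^{(2)} \geq \underline{T} - g$ for $j = 2,\ldots,N$, which is exactly \eqref{cp_Tmin}. Together with the previous paragraph, \eqref{cp_thrust} implies $\underline{T} \leq T(t) \leq \overline{T}$, i.e. $\vect{u}(t) \in \mathcal{U}_T$, for all $t \in [\tau_0,\tau_v)$.

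I expect the only place that needs genuine thought to be this convex inner approximation of the lower thrust bound via the vertical-acceleration half-space; the standing assumption $0 \leq \underline{T} \leq g$ is what keeps it non-vacuous (the hover acceleration $\ddot{\vect{r}} = \vect{0}$ satisfies $\ddot{z} + g = g \geq \underline{T}$). The rest is routine bookkeeping combining the flatness map \eqref{flat_T_map}, Lemma \ref{smooth_strong_convexity}, Proposition \ref{inclusion_prop}, and the definition of the second-order VCPs.
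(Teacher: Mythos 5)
Your proof is correct and follows essentially the same route as the paper: apply Proposition \ref{inclusion_prop} with $r=2$ to push the VCP conditions to continuous-time constraints on $\ddot{\vect{r}}(t)$, identify the upper bound with inclusion in the ball of radius $\overline{T}$ centered at $-g\vect{z}_W$ via \eqref{flat_T_map}, and handle the nonconvex lower bound by the sufficient half-space condition $\ddot{z}(t)+g\geq\underline{T}$, which gives $\underline{T}\leq \ddot{z}+g\leq\|\ddot{\vect{r}}+g\vect{z}_W\|_2=T(t)$. Your explicit framing of the lower bound as a convex inner approximation is exactly the point the paper makes in the remark following the lemma.
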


% \begin{lemma}\label{lemthrust}
% Sufficient conditions for $\vect{u}(t) \in \mathcal{U}_T, \ t\in[\tau_0,\tau_v)$ are:
% \begin{subequations}\label{cp_thrust}
% \begin{align}
%     \bigg\|\sum_{i=0}^N\vect{P}_ib_{2,i+1,j+1} + g\vect{z}_W\bigg\|_2\leq \overline{T}, \quad j=2,\ldots,N, \label{cp_Tmax}\\
%     \sum_{i=0}^N\vect{z}_W^T\vect{P}_ib_{2,i+1,j_1}\geq \underline{T}-g, \quad j= 2,\ldots,N. \label{cp_Tmin}
% \end{align}
% \end{subequations}
% \end{lemma}

\begin{proof}
%\XX{No need to say ``Suppose  \eqref{cp_thrust} holds'' as it is the lemma condition.}\VF{I addressed this in future proofs.} 
Applying the definition of $2$nd order VCPs \eqref{virtual_cp_eq} to \eqref{cp_Tmax} yields:
\begin{equation} \label{vcp_Tmax}
    \|\vect{P}_j^{(2)}+g\vect{z}_W\|_2\leq \overline{T}, \quad j = 2,\dots,N.
\end{equation}
By Proposition \ref{inclusion_prop}, and expanding the norm shown in \eqref{vcp_Tmax}, we have:
\begin{equation} \label{T_sphere}
    \sqrt{\ddot{x}^2(t) + \ddot{y}^2(t)+\big(\ddot{z}(t)+g\big)^2}\leq \overline{T}, \quad t\in[\tau_0,\tau_v),
\end{equation}
which constrains the acceleration to lie within a sphere in $\ddot{\vect{r}}$ coordinates centered at $(0,0,-g)$. Note that by the flatness map $\eqref{flat_T_map}$, \eqref{T_sphere} is equivalent to $T(t)\leq \overline{T},\ t\in[\tau_0,\tau_v)$. We now apply the same initial steps to \eqref{cp_Tmin} to get
\begin{equation}\label{vcp_Tmin}
    \vect{P}_j^{(2)}\geq \underline{T}-g, \quad j=2,\ldots,N,
\end{equation}
and applying Proposition 1 to get
\begin{equation}
    \ddot{z}(t)\geq \underline{T}-g, \quad t\in[\tau_0,\tau_v),
\end{equation}
which lower-bounds the $z$-component of acceleration. Therefore,
\begin{equation}
    \underline{T}\leq \sqrt{\ddot{x}^2(t) + \ddot{y}^2(t)+\big(\ddot{z}(t)+g\big)^2}=T(t)
\end{equation}
holds over $t\in[\tau_0,\tau_v)$. This completes the proof.
\end{proof}

% \begin{proof}\VF{Old backwards reasoning} From the flatness map \eqref{flat_T_map}, 
% % $\Phi$ (see e.g., \cite{min_snap,zhou2014vector}), %\XX{where did we explain the explicit form of the flatness map?},
% the upper-bound part of \eqref{const_T} is equivalent to:
% \begin{equation} \label{T_sphere}
%     \ddot{x}^2 + \ddot{y}^2+\big(\ddot{z}+g\big)^2\leq \overline{T}^2,
% \end{equation}
% which describes a convex spherical region in $\ddot{\vect{r}}$ coordinates with the center shifted $g$ units in the negative $\ddot{z}$ direction. By Proposition \ref{inclusion_prop},  \eqref{T_sphere} can be written equivalently as:
% \begin{equation} \label{vcp_Tmax}
%     \|\vect{P}_j^{(2)}+g\vect{z}_W\|_2\leq \overline{T}, \quad j = 2,\dots,N.
% \end{equation}
% By \eqref{virtual_cp_eq}, condition \eqref{vcp_Tmax} is equivalent to \eqref{cp_Tmax}.
% Next, we consider the constraint $\underline{T}\leq \ddot{x}^2 + \ddot{y}^2+\big(\ddot{z}+g\big)^2$. A sufficient condition satisfying this inequality is $\ddot{z}(t)\geq \underline{T}-g$ for all $t$. By Proposition \ref{inclusion_prop}, this sufficient condition is guaranteed by a single half-space constraint on the $2$-nd order VCPs: 
% $$
% \vect{z}_W^T\vect{P}_{j}^{(2)}\geq \underline{T}-g, j = 2,\dots,N,$$ 
% which is equivalent to \eqref{cp_Tmin} by \eqref{virtual_cp_eq}. %This completes the proof.
% % \begin{equation}\label{vcp_Tmin}
% %     \vect{z}_W^T\vect{P}_{j}^{(2)}\geq \underline{T}-g, \quad j = 2,\dots,N.
% % \end{equation}
% \end{proof}

\begin{remark}
%Adding a lower bound to the inequality shown in \eqref{T_sphere} makes it nonconvex. 
When the inequality $\underline{T}\leq T(t)$ is expressed in terms of the acceleration $\ddot{\vect{r}}(t)$ by \eqref{flat_T_map}, it signifies exclusion from a spherical region. As a result, it is a nonconvex constraint in terms of the B-spline control points. 
% The constraint $\underline{T}\leq T \leq \overline{T}$ is nonconvex in terms of control points
%\XX{Is this what you meant?}\VF{I specified with a longer sentence.}
Such a constraint was converted to a mixed-integer constraint in \cite{Constrained_trajectory}, and its conservative relaxation  was also considered in \cite{mueller2013model}. The spherical region \eqref{T_sphere} is bigger than typical quadcopter operation requires: the lower half ($\ddot{z} < -g$) of sphere \eqref{T_sphere} is accessed only when the quadcopter is ``upside-down'' ($|\phi|\geq \pi/2$ or $|\theta| \geq \pi/2$) and this is only required in very aggressive flights. The proof of Lemma \ref{lemthrust} reveals that some  generally unused feasible region is  sacrificed by lower-bounding $\ddot{z}(t)$ which results in a lower-bound for $T(t)$. 
\end{remark}

\subsubsection{Angular Velocity Constraints $\vect{u}\in\mathcal{U}_{\omega}$} 
Angular velocities are the inputs to the quadcopter model considered and are usually subject to saturation. Limits in angular velocities are also desirable to ensure integrity of gyroscope data. Consider the angular velocity constraints set defined as follows:
\begin{equation}\label{const_dang}
    \mathcal{U}_{\omega} = \{\vect{u}\in\mathbb{R}^4:|p|,|q| \leq \overline{\omega}\},
\end{equation}
where $\overline{\omega}$ is a given bound for the angular velocities. Recall that $r(t)=0$ because of the assumption $\psi(t) = 0$. 
\begin{lemma}\label{lemangular}
Consider B-spline curve $\vect{r}(t)$ defined as in \eqref{b-spline_curve}. For any vector  $\vect{\zeta}=(\zeta_1,...,\zeta_{N-d+1})^T$ whose entries are positive constants, if the following conditions:
\begin{subequations} \label{cp_omega}
\begin{align}
    \sum_{i=0}^N\vect{z}_W^T\vect{P}_ib_{2,i+1,j+1} &\geq \zeta_{l-d+1}-g,\quad j = l-d+2,\ldots,l, \label{cp_omega_Tmin}\\
    \bigg\|\sum_{i=0}^N\vect{P}_ib_{3,i+1,j+1}\bigg\|_2 &\leq \overline{\omega}\ \zeta_{l-d+1},\quad j=l-d+3,\ldots,l,\label{cp_omega_jerk}
\end{align}
\end{subequations}
hold for $l=d,\ldots,N$, then $\vect{u}(t)\in\mathcal{U}_{\omega},\; \forall t\in[\tau_0,\tau_v)$.
\end{lemma}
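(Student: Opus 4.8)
The plan is to first reduce the angular-velocity bound $\max\{|p(t)|,|q(t)|\}\le\overline{\omega}$ to a pointwise inequality relating the jerk $\dddot{\vect{r}}$ to the thrust $T$, and then to enforce that inequality on every nonempty knot interval $[\tau_l,\tau_{l+1})$ by applying the local form \eqref{inclusion_local} of Proposition \ref{inclusion_prop}, the auxiliary constants $\zeta_{l-d+1}$ serving to lower-bound $T$ interval by interval. For the reduction, observe that $\{\vect{x}_B,\vect{y}_B,\vect{z}_B\}$ is orthonormal and, by its definition, $\vect{h}_{\omega}$ is orthogonal to $\vect{z}_B$; hence $\vect{h}_{\omega}=(\vect{x}_B\cdot\vect{h}_{\omega})\vect{x}_B+(\vect{y}_B\cdot\vect{h}_{\omega})\vect{y}_B$, and the flatness maps \eqref{flat_p_map}--\eqref{flat_q_map} give $p^2(t)+q^2(t)=\|\vect{h}_{\omega}(t)\|_2^2$. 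Since $\vect{h}_{\omega}$ equals $\dddot{\vect{r}}/\|\vect{T}\|_2$ with its $\vect{z}_B$-component removed, the orthogonal decomposition $\|\dddot{\vect{r}}\|_2^2=\|\dddot{\vect{r}}-(\vect{z}_B\cdot\dddot{\vect{r}})\vect{z}_B\|_2^2+(\vect{z}_B\cdot\dddot{\vect{r}})^2$ yields $\|\vect{h}_{\omega}(t)\|_2\le\|\dddot{\vect{r}}(t)\|_2/\|\vect{T}(t)\|_2$, and by \eqref{flat_T_map} we have $\|\vect{T}(t)\|_2=T(t)\ge\ddot z(t)+g$. Consequently it suffices to exhibit, on each $[\tau_l,\tau_{l+1})$, a positive constant $c$ with $\ddot z(t)+g\ge c$ (so that $T(t)\ge c$) and $\|\dddot{\vect{r}}(t)\|_2\le\overline{\omega}\,c$, because then $\max\{|p(t)|,|q(t)|\}\le\sqrt{p^2(t)+q^2(t)}=\|\vect{h}_{\omega}(t)\|_2\le\|\dddot{\vect{r}}(t)\|_2/T(t)\le\overline{\omega}$ on that interval.

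Fixing $l\in\{d,\ldots,N\}$ and taking $c=\zeta_{l-d+1}>0$, I would invoke \eqref{inclusion_local} twice on $[\tau_l,\tau_{l+1})$. With $r=2$ and the half-space $\mathcal{S}=\{\vect{p}\in\R^3:\vect{z}_W^T\vect{p}\ge\zeta_{l-d+1}-g\}$, condition \eqref{cp_omega_Tmin} states exactly that the relevant $2$nd-order VCPs $\vect{P}_{l-d+2}^{(2)},\dots,\vect{P}_l^{(2)}$ lie in $\mathcal{S}$, so $\ddot z(t)=\vect{z}_W^T\vect{r}^{(2)}(t)\ge\zeta_{l-d+1}-g$, i.e., $\ddot z(t)+g\ge\zeta_{l-d+1}$. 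With $r=3$ and the ball $\mathcal{S}=\{\vect{p}\in\R^3:\|\vect{p}\|_2\le\overline{\omega}\,\zeta_{l-d+1}\}$, condition \eqref{cp_omega_jerk} states that the relevant $3$rd-order VCPs $\vect{P}_{l-d+3}^{(3)},\dots,\vect{P}_l^{(3)}$ lie in $\mathcal{S}$, so $\|\dddot{\vect{r}}(t)\|_2=\|\vect{r}^{(3)}(t)\|_2\le\overline{\omega}\,\zeta_{l-d+1}$. By the first paragraph, $\max\{|p(t)|,|q(t)|\}\le\overline{\omega}$ on $[\tau_l,\tau_{l+1})$. Taking the union over $l=d,\ldots,N$ covers $[\tau_d,\tau_{N+1})=[\tau_0,\tau_v)$ by the clamped structure \eqref{clamped}; since $r(t)=\vect{z}_B\cdot(\dot\psi\vect{z}_W)=0\le\overline{\omega}$ under the standing assumption $\psi\equiv0$, we conclude $\vect{u}(t)\in\mathcal{U}_{\omega}$ for all $t\in[\tau_0,\tau_v)$.

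The step I expect to be the main obstacle is the geometric reduction in the first paragraph: recognizing that $p^2+q^2$ equals $\|\vect{h}_{\omega}\|_2^2$ exactly---not merely up to a bound---because $\vect{h}_{\omega}\in\mathrm{span}\{\vect{x}_B,\vect{y}_B\}$, and that discarding the $\vect{z}_B$-component of $\dddot{\vect{r}}$ is non-expansive, so that the angular-velocity constraint collapses to the B-spline-friendly inequality $\|\dddot{\vect{r}}\|_2\le\overline{\omega}(\ddot z+g)$. A secondary subtlety is that a single global lower bound on $T$ would be needlessly conservative, which is precisely why per-interval positive constants $\zeta_{l-d+1}$ are introduced and the local form \eqref{inclusion_local} of Proposition \ref{inclusion_prop} is used rather than its global counterpart \eqref{inclusion_global}; once that is set up, matching the index ranges in \eqref{cp_omega} to the VCPs that appear in the local hull \eqref{strong_convex} is routine bookkeeping.
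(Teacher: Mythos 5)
Your proposal is correct and follows essentially the same route as the paper: apply the local inclusion \eqref{inclusion_local} of Proposition \ref{inclusion_prop} to the $2$nd- and $3$rd-order VCP conditions to obtain $T(t)\geq\zeta_{l-d+1}$ and $\|\dddot{\vect{r}}(t)\|_2\leq\overline{\omega}\,\zeta_{l-d+1}$ on each $[\tau_l,\tau_{l+1})$, then bound $|p|,|q|\leq\|\vect{h}_{\omega}\|_2\leq\|\dddot{\vect{r}}\|_2/T\leq\overline{\omega}$ via the flatness maps and take the union over $l$. Your observation that $p^2+q^2=\|\vect{h}_{\omega}\|_2^2$ holds with equality is a slight sharpening of the paper's dot-product inequality, but it does not change the argument.
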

% $\underline{\vect{T}}\in\mathbb{R}^{N-d+1}$ satisfying:
% \begin{equation} \label{cp_omega_Tvec}
%     0\leq \underline{\vect{T}} = \{\underline{T}_k\}, \quad k = 1,\dots,N-d+1,
% \end{equation}\end{lemma}
\begin{proof}
% Suppose \eqref{cp_omega} holds for some $l\in\{d,d+1,\ldots,N\}$ and consider vector $\vect{\zeta}\in\mathbb{R}^{N-d+1}$ with positive entries. We can then apply the definition of $2$-nd order VCPs to \eqref{cp_omega_Tmin} and of $3$-rd order VCPs to \eqref{cp_omega_jerk} to obtain, respectively:
Applying the definitions of $2$nd order VCPs to \eqref{cp_omega_Tmin} and $3$rd order VCPs to \eqref{cp_omega_jerk} yields:
\begin{subequations}
\begin{align}
    \vect{z}_W^T\vect{P}_{j}^{(2)}\geq \zeta_{l-d+1}-g,  \quad j = l-d+2,\dots,l,\label{vcp_omega_Tmin}\\
    \|\vect{P}^{(3)}_j\|_2\leq \zeta_{l-d+1}\overline{\omega},\quad j = l-d+3,\dots,l.\label{vcp_omega_jerk}
\end{align}
\end{subequations}
Applying now \eqref{inclusion_local} from Proposition \ref{inclusion_prop} allows us to establish:
\begin{equation}\label{dang_bounds_lax_local}
T(t) \geq \zeta_{l-d+1},\; \|\dddot{\vect{r}}(t)\|_2 \leq \overline{\omega}\ \zeta_{l-d+1}, \quad t\in[\tau_l,\tau_{l+1}),
\end{equation}
which lower-bounds the thrust and upper-bounds the magnitude of the jerk over the $l$-th knot sub-interval. Recalling that the entries of $\vect{\zeta}$ are positive, we can combine inequalities to write:
\begin{equation*}
    \frac{\|\dddot{\vect{r}}(t)\|_2}{T(t)}\leq\frac{\|\dddot{\vect{r}}(t)\|_2}{\zeta_{l-d+1}}\leq \overline{\omega},\quad t\in[\tau_l,\tau_{l+1}). 
\end{equation*}
Now examine the flat maps \eqref{flat_p_map} and \eqref{flat_q_map} and take the absolute value of both sides to arrive at:
\begin{equation*}
    |p| = |-\vect{y}_B\cdot\vect{h}_{\omega}|, \quad
    |q| = |\vect{x}_B\cdot\vect{h}_{\omega}|.
\end{equation*}
Since $\vect{x}_B$ and $\vect{y}_B$ are unit vectors, we can write $|p|,|q|\leq \|\vect{h}_{\omega}\|_2$ by the definition of the dot product. Furthermore, examining $\vect{h}_{\omega}$ we note that the numerator is a projection of the jerk to a plane perpendicular to the $\ddot{z}$-axis which only reduces its norm, and that the denominator is exactly the thrust $T$. In particular, the following holds:
\begin{equation*}
    |p(t)|,|q(t)|\leq \|\vect{h}_{\omega}(t)\|_2\leq \frac{\|\dddot{\vect{r}}(t)\|_2}{T(t)}\leq \overline{\omega}, \quad  t\in[\tau_l,\tau_{l+1}).
\end{equation*}
Finally, taking $l=d,\ldots,N$ and letting $\vect{\tau}$ be a clamped knot vector satisfying \eqref{clamped} allows to establish $|p(t)|,|q(t)|\leq \overline{\omega}$ over the whole interval $[\tau_0,\tau_v)$. This completes the proof.
%\XX{Explain the detailed steps how the following conclusion can be derived, instead of just mentioning \cite{raff}.} \VF{I added the derivation for bounded angular velocities.}
% We now recover from \cite{raff} (Section IV-C) that the angular velocities $p,q$ are upper-bounded by the ratio of the jerk's magnitude and the thrust regardless of the yaw angle:
% \begin{equation}\label{lem5ineq}
%     |p(t)|,|q(t)|\leq \frac{\|\dddot{\vect{r}}(t)\|_2}{T(t)}\leq \overline{\omega},\quad t\in[\tau_l,\tau_{l+1}).
% \end{equation}
% Finally, the clamped knot vector \eqref{clamped} allows to establish \eqref{lem5ineq} over interval $[\tau_0,\tau_v)$ and for $l=d\ldots,N$. This completes the proof.\XX{Check the rewording.}
% Finally, taking $l=d\ldots,N$ and recalling that we consider a clamped knot vector \eqref{clamped} allows to establish the previous result over interval $[\tau_0,\tau_v)$ instead, concluding the proof.
\end{proof}
\subsubsection{Waypoint Constraints}
Waypoint constraints are commonly used for shaping the trajectory.  %Following the relaxed formulation of \cite{lax_wp_paper}, we 
Assume that there are $n_{wp}$ waypoints $\vect{p}_1^{wp},\ldots,\vect{p}_{n_{wp}}^{wp}\in\mathbb{R}^3$ near which the trajectory must pass at certain discrete times $t_i, (i=1,\ldots,n_{wp})$. Specifically, the following constrains are enforced for the position:
\begin{equation}
    \|\vect{r}(t_i)-\vect{p}^{wp}_i\|_2\leq d_{wp}, \quad i=1,\dots,n_{wp}
    \label{const_wp},
\end{equation}
where $d_{wp}\geq 0$ is the desired radius indicating the closeness of the trajectory to the waypoints. Note that the waypoint constraints are exact when $d_{wp}=0$. Recalling that $\vect{r}(t)$ is considered a B-spline curve, constraint \eqref{const_wp} is a SOC constraint in terms of its control points:
\begin{equation}\label{cp_wp}
    \bigg\|\sum_{j=0}^{N}\vect{P}_j\lambda_{j}(t_i) - \vect{p}_{i}^{wp}\bigg\|_2\leq d_{wp}, \quad i = 1,\dots,n_{wp}.
\end{equation}
%The above constraint may be added to \eqref{opt_u} to modify the shape of the trajectory. Note that when $d_{wp}=0$, the problem remains a QP.

\subsection{Safe Trajectory Planning as SOCP}
%\subsection{Minimum-snap Trajectory Without Safety Constraints}

Suppose that the initial (resp. final) conditions of the trajectory are fixed and given as $\vect{p}^{0}_r$ (resp. $\vect{p}^{f}_r$) where $r = 0,\dots,n_0$ (resp. $r=0,\dots,n_f$). For example, the initial (resp. final) position of the trajectory is denoted as $\vect{p}_0^0$ (resp. $\vect{p}_0^f$) for $r=0$, and the initial (resp. final) velocity is denoted as $\vect{p}^{0}_1$ (resp. $\vect{p}^{f}_1$) for $r=1$. 
%In general, we may have initial (resp. final) conditions $\vect{p}^{0}_r$ (resp. $\vect{p}^{f}_r$) for $r = 0,\dots,n_r$. We can encode them with:
These initial/final constraints can be expressed as:
\begin{equation}\label{extremes}
    \sum_{i=0}^N\vect{P}_i\vect{b}_{r,i+1}^T\vect{\Lambda}_{d-r}(t_m) = \vect{p}^m_{r}, \quad r = 0,\dots,n_m,
\end{equation}
where $m\in\{0,f\}$ represents initial and final labels, respectively, and $0\leq n_0,n_f\leq d$. Note that $n_0$ may be different from $n_f$ and that $t_0 = \tau_0$ and $t_f = \tau_v$.

%Minimum-snap trajectories are desirable for quadcopters because they naturally minimize the actuator effort and are easy to compute \cite{min_snap}. 
%and \XX{more reasons??}\VF{I don't know any other. Minimum jerk will minimize $\vect{u}$ but minimum snap will minimize $\vect{u}_{12} = \mat{T &M_x & M_y & M_z}$ the input to the 12-Dim model.} \cite{min_snap}. 
In this work we choose an objective function that minimizes $\int_{\tau_0}^{\tau_v}\|\vect{r}^{(4)}(t)\|^2_2 \mathrm{d}t$ where $\vect{r}^{(4)}(t)$ denotes the ``snap'' of the trajectory \cite{min_snap}. 
Taking $\vect{r}^{(r)}$ to be a B-spline curve constructed as in \eqref{b-spline_curve}, the (simple) objective function is given as 
%$J_s(\vect{P}_{0},\dots,\vect{P}_{N}) = \int_{t_0}^{t_f}\bigg\|\sum_{i=0}^N\vect{P}_i\vect{b}_{d,4,i+1}^T\vect{\Lambda}_{d-4}(t)\bigg\|_2^2 \mathrm{d}t,$ 
\begin{equation*}%\label{min_snap_obj}
    J_s(\vect{P}_{0},\dots,\vect{P}_{N}) = \int_{t_0}^{t_f}\bigg\|\sum_{i=0}^N\vect{P}_i\vect{b}_{d,4,i+1}^T\vect{\Lambda}_{d-4}(t)\bigg\|_2^2 \mathrm{d}t,
\end{equation*}
where the decision variables are the control points $\vect{P}_0,\ldots,\vect{P}_N$ of the B-spline curve $\vect{r}(t)$. %Other objective functions are also possible; e.g., the minimum path length as in \cite{Flat_trajectory_design}. 
By examining \eqref{dang_bounds_lax_local} we observe that it is desirable to make each entry $\zeta_k$ large to maximize the feasible region. Therefore, we  introduce $\vect{\zeta}$ into the objective function such that it is maximized. Specifically, the modified objective function is chosen to be:
\begin{equation} \label{obj_dang_bounds}
    J(\vect{P}_0,\dots,\vect{P}_{N},\vect{\zeta}) = J_s(\vect{P}_0,\dots,\vect{P}_{N})- \sum_{k=1}^{N-d+1}\zeta_{k}.
\end{equation}

%\XX{are $n_0,n_f$ different?} 
%\XX{Why we should put final position constraint here? Isn't that part of the waypoint constraint?}\VF{If we don't specify a final position for the trajectory, then we are just going to generate a trajectory to 0 to minimize the snap. This is sort of like the boundary value problem (initial and final position given).}

% Trajectory generation without safety constraints is now formulated as the following Quadratic Program (QP):
% \begin{align}    \label{opt_u}
%     \min_{\vect{P}_{0},\dots,\vect{P}_{N}} & \quad J_s(\vect{P}_0,\dots,\vect{P}_N) \\
%     \text{s.t.} &\quad  \eqref{extremes}.
%     \nonumber
% \end{align}

%Any of the constraints described in the previous section can be added to \eqref{opt_u}  

Summarizing results above, the safe trajectory planning problem can  now be expressed as the following  SOCP:
\begin{align} \label{FLAT-SOCP}
\tag{FLAT-SOCP}
    \min_{\vect{P}_{0},\dots,\vect{P}_{N},\vect{\zeta}} \quad &J(\vect{P}_0,\dots,\vect{P}_N,\vect{\zeta}) \\
    \text{s.t.} \quad & \eqref{cp_pos},\eqref{cp_vel},\eqref{cp_ang},\eqref{cp_thrust},\eqref{cp_omega},\eqref{cp_wp},\eqref{extremes}\;\text{hold},\nonumber
\end{align}
where ``FLAT'' means that the solutions parameterize the flat output trajectory $\vect{\sigma}(t) = (\vect{r}(t),0)$. % \XX{finish}\VF{Done I think.}
The trajectory in flat space is defined by control points generated from \eqref{FLAT-SOCP}, and the corresponding state/input trajectory satisfies all the safety constraints. This is summarized in the following theorem.
\begin{theorem}\label{thm1}
Suppose that \eqref{FLAT-SOCP} has a feasible solution $\vect{P}_0,\dots, \vect{P}_N$. Let $\vect{r}(t)$ be a B-spline curve defined by the control points $\vect{P}_0,\dots, \vect{P}_N$. Then, the quadcopter state $\vect{x}(t)$ and input $\vect{u}(t)$ trajectories, which are obtained from the flat map \eqref{flat_map} with $\vect{\sigma}(t) = (\vect{r}(t),0)$,
%(obtained from the flat output trajectory $\vect{\sigma}(t) = (\vect{r}(t),0)$, where $\vect{r}(t)$ is a B-spline curve defined by the control points $\vect{P}_0,\dots, \vect{P}_N$) 
satisfy the safety constraints \eqref{state_input_const} for all $t\in[\tau_0,\tau_v)$.%\XX{Check}\VF{Looks good. No need to use the solution $\vect{\zeta}$ because it is just an adaptive parameter for a constraint. And I assume no need to prove this because of all the previous Lemmas.}
\end{theorem}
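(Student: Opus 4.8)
The plan is to obtain Theorem~\ref{thm1} by chaining together the per-constraint lemmas of this section, since each block of SOC constraints in \eqref{FLAT-SOCP} was constructed precisely as a sufficient condition for one component of \eqref{state_input_const}. First I would record that a feasible solution of \eqref{FLAT-SOCP} makes \eqref{cp_pos}, \eqref{cp_vel}, \eqref{cp_ang}, \eqref{cp_thrust} and \eqref{cp_omega} hold for the control points $\vect{P}_0,\dots,\vect{P}_N$ defining $\vect{r}(t)$. Then I would apply, one at a time: Proposition~\ref{inclusion_prop} with $r=0$ and $\mathcal{S}=\mathcal{K}_p$ to get $\vect{r}(t)\in\mathcal{K}_p$, hence $\vect{x}(t)\in\mathcal{X}_p$; Proposition~\ref{inclusion_prop} with $r=1$ and the (convex) Euclidean ball of radius $\overline{v}$ to get $\|\dot{\vect{r}}(t)\|_2\leq\overline{v}$, hence $\vect{x}(t)\in\mathcal{X}_v$; the lemma that establishes \eqref{cp_ang} to get $|\phi(t)|,|\theta(t)|\leq\epsilon$, hence $\vect{x}(t)\in\mathcal{X}_\xi$; Lemma~\ref{lemthrust} to get $\underline{T}\leq T(t)\leq\overline{T}$, hence $\vect{u}(t)\in\mathcal{U}_T$; and Lemma~\ref{lemangular}, applied with the vector $\vect{\zeta}$ returned by the solver (whose entries are nonnegative since the left side of \eqref{cp_omega_jerk} is a norm, and the degenerate case $\zeta_k=0$ forces $\dddot{\vect r}\equiv0$ on that sub-interval so that $p=q=0$ anyway), to get $|p(t)|,|q(t)|\leq\overline{\omega}$, hence $\vect{u}(t)\in\mathcal{U}_\omega$. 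Intersecting the three state memberships gives $\vect{x}(t)\in\mathcal{X}_p\cap\mathcal{X}_v\cap\mathcal{X}_\xi=\mathcal{X}$ and intersecting the two input memberships gives $\vect{u}(t)\in\mathcal{U}_T\cap\mathcal{U}_\omega=\mathcal{U}$, for every $t\in[\tau_0,\tau_v)$, which is exactly \eqref{state_input_const}.

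Before chaining the lemmas, I would verify that the flat map $\Phi$ in \eqref{flat_map} is well-posed along $\vect{r}(t)$ — this is the step that actually ties the flat-space optimization back to the state/input-space constraints. Two points are needed. First, $\Phi$ involves derivatives of $\vect{\sigma}=(\vect{r},0)$ only up to third order, so $\vect{r}(t)$ must be thrice differentiable; this holds on every open knot interval for any degree, and at the interior knots by property~P1) provided $d\geq4$, which is implicit in penalizing the snap $\vect{r}^{(4)}$ in the objective \eqref{obj_dang_bounds}. Second, $\Phi$ contains divisions and inverse trigonometric evaluations that degenerate only when $\|\vect{T}(t)\|_2=T(t)=0$ or when $\vect{z}_B(t)$ approaches the horizontal (so that $\vect{y}_C\times\vect{z}_B$ vanishes or $|\theta|\to\pi/2$); both are excluded on the feasible set, because \eqref{cp_Tmin} forces $T(t)\geq\underline{T}>0$ and \eqref{cp_ang} forces $|\theta(t)|\leq\epsilon<\pi/2$.

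I do not anticipate a genuine obstacle here: the argument is essentially bookkeeping, and the only care points are (i) matching each lemma's hypotheses exactly to the corresponding constraint block of \eqref{FLAT-SOCP} (including the auxiliary variable $\vect{\zeta}$ in Lemma~\ref{lemangular}), and (ii) the well-posedness check just described. The boundary conditions \eqref{extremes} and waypoint constraints \eqref{cp_wp} do not appear in \eqref{state_input_const} and therefore play no role in this argument; they can be mentioned only for completeness.
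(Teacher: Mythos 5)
Your proposal is correct and follows exactly the route the paper intends: Theorem~\ref{thm1} is stated as a summary of the preceding development, and its proof is precisely the chaining of Proposition~\ref{inclusion_prop} (for \eqref{cp_pos}, \eqref{cp_vel}) with the angle, thrust, and angular-velocity lemmas, which is what you do. Your added checks---well-posedness of the flat map $\Phi$ along $\vect{r}(t)$ and the sign of the auxiliary variable $\vect{\zeta}$ required by Lemma~\ref{lemangular}---are details the paper leaves implicit, and including them only strengthens the argument.
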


Problem \eqref{FLAT-SOCP} can be relaxed to a QP if one replaces the convex regions described by the cone constraints in \eqref{FLAT-SOCP} by their polytopic inner approximations. For example, for the SOC constraint \eqref{cp_Tmax}, if we find a polytopic inner approximation as follows: 
$$
\{\ddot{\vect{r}}\in\mathbb{R}^3:A_T\ddot{\vect{r}}\leq \vect{b}_{T}\}\subset\{\ddot{\vect{r}}\in\mathbb{R}^3:\eqref{T_sphere}\;\mbox{holds}\},
$$
then we can replace the SOC constraint \eqref{cp_Tmax} with the linear constraint 
$$
A_T\vect{P}_j^{(2)}\leq \vect{b}_T,\quad  j=2,\dots,N,
$$ 
in \eqref{FLAT-SOCP}. Other replacements can be done in a similar way for all SOC constraints.

\begin{remark}
The optimization formulation shown in \eqref{FLAT-SOCP} is based on B-spline parameterization of the flat trajectory, which is more convenient than the parameterization using piece-wise polynomials as in  \cite{min_snap}. By property P1), the smoothness requirement from the flatness map is satisfied automatically by choosing $d\geq4$, without requiring additional smoothness constraints as in \cite{min_snap} or objective function reformulation as in \cite{polynomial}. 
\end{remark}

\begin{remark}
The feasible region of \eqref{FLAT-SOCP} may be expanded at the expense of computational burden by increasing $N$ (number of control points), which must always respect $v = N+d+1$.
\end{remark}

\begin{remark}
In \cite{Flat_trajectory_design}, the problem of ensuring continuous-time safety for thrust and Euler angles is considered via solving a nonlinear optimization problem, for which only local minimum can be found.  In contrast, \eqref{FLAT-SOCP} is a convex program whose optimal solution can be found in polynomial time.
% Furthermore, a 6-dimension quadcopter model is considered in \cite{Flat_trajectory_design}, while a 9-dimension quadcopter model is considered in this paper. %is higher fidelity (9 states instead of 6) than that considered in . %\XX{Quotation marks are `` '', not `` ``}%\XX{Need rephrasing. ``In \cite{Flat_trajectory_design}, ?? problem is also considered. However, ....''} \VF{Rephrased.}
%\XX{Do we have a comparison with \cite{Flat_trajectory_design} and  other works along this line? If not, add it here.} \VF{I added some remarks about this.}
\end{remark}

\begin{remark} \label{Local_cst_rmk}
%The presented constraints were satisfied for all $t\in[\tau_0,\tau_v)$. 
Theorem \ref{thm1} ensures that the trajectory generated respects the safety constraints \eqref{state_input_const} for all $t\in[\tau_0,\tau_v)$. 
However, Proposition \ref{inclusion_prop} provides a framework for satisfying the constraints for $t\in[t_1,t_2]\subseteq[\tau_0,\tau_v)$ as long as we can find two knots $\tau_i, \tau_j$ such that $[t_1,t_2]\subseteq [\tau_i,\tau_j)$. This type of local constraint satisfaction may be of interest in applications where safety is needed only at certain times as well as in obstacle avoidance which will be shown in the next subsection.  %which will be discussed in Section \ref{subavoidance}.
\end{remark}
% \begin{remark}
% The feasible region of \eqref{FLAT-SOCP} may be expanded at the expense of computational burden by increasing $N$ (number of control points). Note that the parameter $N$ must always respect $v = N+d+1$.
% \end{remark}

\begin{remark}
Constraint \eqref{cp_omega_Tmin} is compatible with \eqref{cp_Tmax} and \eqref{cp_Tmin}. When $\underline{T}$ is provided, it follows that $\underline{T} \leq \zeta_k \leq \overline{T}, k = 1,\dots,N-d+1$. In fact, the present formulation may lower-bound $T(t)$ more tightly than required to expand the feasible region for \eqref{dang_bounds_lax_local}, but never so much that it violates the upper bound. 
%\VF{Comment out the rest of this remark if space is needed}
Additionally, while constraint \eqref{cp_omega} increases optimality by expanding the feasible region, it comes at the expense of computational cost. A possible compromise is by replacing the vector $\vect{\zeta}$ with a scalar $\zeta_\omega$ in the objective function \eqref{obj_dang_bounds} and replacing constraint \eqref{cp_omega} with the following global constraint: 
\begin{align*}
    \sum_{i=0}^N\vect{z}_W^T\vect{P}_ib_{2,i+1,j+1}&\geq \zeta_\omega-g, \quad &j= 2,\dots,N,\\
    \bigg\|\sum_{i=0}^N\vect{P}_ib_{3,i+1,j+1}\bigg\|_2 &\leq \overline{\omega}\ \zeta_{\omega}, \quad &j=3,\dots,N.
\end{align*}

\end{remark}
% \begin{remark}
%   The SOCP problem \eqref{FLAT-SOCP} can be relaxed to a quadratic programming by finding polytopic approximations to the convex regions described by the cone constraints. For example, finding polytope $\{\ddot{\vect{r}}\in\mathbb{R}^3:A_T\ddot{\vect{r}}\leq \vect{b}_{T}\}\subset\{\ddot{\vect{r}}\in\mathbb{R}^3:\eqref{T_sphere}\}$ and replacing \eqref{cp_Tmax} with:
%   \begin{equation*}
%     A_T\vect{P}_j^{(2)}\leq \vect{b}_T, \quad j=2,\dots,N.
%   \end{equation*}
% \end{remark}

% \XX{A subsection about how to incorporate obstacle avoidance into our method above, e.g., \cite{stoical2016obstacle}. The problem will be a mixed-integer programming, or something else?}\VF{I Added the two formulations for obs-avoidance.}

\subsection{Obstacle Avoidance with Continuous-time Guarantees via SOCP}\label{subavoidance}

% We present two approaches to guarantee continuous-time obstacle avoidance in \eqref{FLAT-SOCP}. The first method (direct) results in Mixed-Integer constraints and is not suitable for online replanning. The second method (indirect) allows the problem to remain convex by placing the burden of parsing the configuration space on a high-level path finding algorithm.

%\subsubsection{Direct Obstacle Avoidance}

The safe flight space is usually non-convex which makes the obstacle avoidance problem difficult \cite{tang2021real}. Obstacle avoidance with continuous-time safety guarantees was studied in \cite{stoical2016obstacle} where each obstacle is expressed as a polytope and the trajectory planning problem is formulated as a mixed-integer QP, which is an NP-hard problem and renders the online trajectory re-planning computationally infeasible. In this subsection, we show that, by leveraging the locality property of B-splines, \eqref{FLAT-SOCP} can be modified to achieve obstacle avoidance with a rigorous continuous-time safety guarantee.

Suppose that the safe flight space $\mathcal{F} \in \mathbb{R}^3$ is a set defined as $\mathcal{F} \triangleq \mathbb{R}^3\setminus\left(\mathcal{O}_1\cup...\cup\mathcal{O}_{n_o}\right)$ where $\mathcal{O}_i\in \mathbb{R}^3$ denotes the overapproximation of the $i$-th obstacle. Although $\mathcal{F}$ is generally nonconvex, mild assumptions about the distribution of the obstacles allow  us to determine the existance of convex subsets that, together, form a ``safe corridor'' in $\mathcal{F}$ for the trajectory (see, for example, \cite{gao2018online}). The union of convex subsets can approximate $\mathcal{F}$ arbitrarily well. In practice, these convex subsets are easily found with sensors such as stereo cameras and lidars \cite{hornung13auro}. The time-optimality of trajectories generated using ``safe corridors'' remains an active research topic  \cite{tang2019time,sun2020fast}. 

% \XX{discuss a bit more about ``safe corridor'', like its property and construction, if possible.}

% \XX{The proposition can be more precise?}
\begin{proposition} \label{convex_sets_prop}
Suppose that there exist $n_s$ overlapping convex sets $\{\mathcal{S}_1,\ldots,\mathcal{S}_{n_s}\}$ inside the safe flight space, i.e., $\mathcal{S}_l\subset \mathcal{F},\ l=1,\ldots,n_s,$ satisfying $\mathcal{S}_i\cap \mathcal{S}_{i+1}\neq \emptyset, \ i = 1,\ldots,n_s-1$ and consider B-spline curve $\vect{r}(t)$ defined as in \eqref{b-spline_curve} with $N=n_s+d-1$. If the following conditions hold:
\begin{equation}\label{eqobstc}
    \vect{P}_{j-1} \in \mathcal{S}_{l},\quad j = l,\ldots,l+d, \quad l = 1,\ldots,n_s,
\end{equation}
then  $\vect{r}(t)\in\mathcal{F}$  for all $t\in[\tau_0,\tau_v)$ and $\vect{r}(t)$ is at least $C^{d-1},\  \forall t\in[\tau_0,\tau_v)$.
\end{proposition}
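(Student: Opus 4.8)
The plan is to reduce the continuous-time statement $\vect{r}(t)\in\mathcal{F}$ to finitely many convex-set inclusions, one per nonempty knot sub-interval, and then dispatch each of them directly with the local set-inclusion result \eqref{inclusion_local} of Proposition \ref{inclusion_prop} (applied with $r=0$). The enabling observation is an index count: since $N=n_s+d-1$ and $\vect{\tau}$ is clamped and uniform, the only nonempty knot sub-intervals are $[\tau_i,\tau_{i+1})$ for $i=d,\ldots,N$, and there are exactly $n_s$ of them; reindex them by $l\triangleq i-d+1\in\{1,\ldots,n_s\}$, so that the $l$-th sub-interval is $[\tau_{l+d-1},\tau_{l+d})$.

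First I would fix $l\in\{1,\ldots,n_s\}$ and apply \eqref{inclusion_local} of Proposition \ref{inclusion_prop} with $r=0$, the convex set taken to be $\mathcal{S}_l$, and the knot index taken to be $i=l+d-1\in\{d,\ldots,N\}$. For these choices \eqref{inclusion_local} requires $\vect{P}_j\in\mathcal{S}_l$ for $j=i-d,\ldots,i$, i.e.\ $\vect{P}_{l-1},\vect{P}_l,\ldots,\vect{P}_{l+d-1}\in\mathcal{S}_l$. After the harmless reindexing $k=j-1$, this is exactly the block of constraints in hypothesis \eqref{eqobstc} for the set $\mathcal{S}_l$, namely $\vect{P}_{j-1}\in\mathcal{S}_l$ for $j=l,\ldots,l+d$. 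Hence Proposition \ref{inclusion_prop} yields $\vect{r}(t)\in\mathcal{S}_l$ for all $t\in[\tau_{l+d-1},\tau_{l+d})$, and since $\mathcal{S}_l\subset\mathcal{F}$ we get $\vect{r}(t)\in\mathcal{F}$ on that sub-interval. (Equivalently, one could bypass Proposition \ref{inclusion_prop} and invoke \eqref{strong_convex} of Lemma \ref{smooth_strong_convexity} directly, using convexity of $\mathcal{S}_l$ to contain the convex hull of the $d+1$ control points in $\mathcal{S}_l$.)

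Next I would take the union over $l=1,\ldots,n_s$, i.e.\ over $i=d,\ldots,N$. Because the knot vector is clamped and uniform, $\bigcup_{i=d}^{N}[\tau_i,\tau_{i+1})=[\tau_d,\tau_{N+1})=[\tau_0,\tau_v)$, which upgrades the per-interval inclusions to $\vect{r}(t)\in\mathcal{F}$ for all $t\in[\tau_0,\tau_v)$. The smoothness claim is immediate from property P1): a clamped B-spline curve is $C^{\infty}$ at any $t\notin\vect{\tau}$ and $C^{d-1}$ at any $t\in\vect{\tau}$, hence at least $C^{d-1}$ on $[\tau_0,\tau_v)$.

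The main obstacle is entirely the index bookkeeping: one must check that the shift $l=i-d+1$ matches the convex-hull index block $\{i-d,\ldots,i\}$ with the hypothesis block $\{j-1:j=l,\ldots,l+d\}$, and that the endpoints behave, namely that $l=1$ pins $\vect{P}_0,\ldots,\vect{P}_d\in\mathcal{S}_1$, that $l=n_s$ pins $\vect{P}_{n_s-1},\ldots,\vect{P}_{N}\in\mathcal{S}_{n_s}$ (using $N=n_s+d-1$), so that \eqref{eqobstc} constrains precisely $\vect{P}_0,\ldots,\vect{P}_N$ with no index escaping $[0,N]$. I would also note that the overlap hypothesis $\mathcal{S}_i\cap\mathcal{S}_{i+1}\neq\emptyset$ is not invoked in the implication itself; rather it is a necessary feasibility condition for \eqref{eqobstc}, since the constraints for $\mathcal{S}_l$ and $\mathcal{S}_{l+1}$ jointly force the $d$ shared control points $\vect{P}_l,\ldots,\vect{P}_{l+d-1}$ to lie in $\mathcal{S}_l\cap\mathcal{S}_{l+1}$.
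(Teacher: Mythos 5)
Your proof is correct and follows essentially the same route as the paper's: apply the local inclusion condition \eqref{inclusion_local} of Proposition \ref{inclusion_prop} (with $r=0$) to each group of $d+1$ control points to place the $l$-th segment in $\mathcal{S}_l\subset\mathcal{F}$, then union over the $n_s$ nonempty knot sub-intervals and invoke P1) for the $C^{d-1}$ claim. Your explicit index bookkeeping and the observation that the overlap hypothesis $\mathcal{S}_l\cap\mathcal{S}_{l+1}\neq\emptyset$ serves as a feasibility condition (since the $d$ shared control points are forced into the intersection) rather than being used in the implication are both consistent with, and slightly more precise than, the paper's remarks.
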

\begin{proof} Applying Proposition \ref{inclusion_prop} to the inclusion constraint of the $l$-th  group of control points results in $\vect{r}(t)\in \mathcal{S}_l, \ t\in[\tau_{l+d-1},\tau_{l+d})$, which means that the $l$-th segment of the B-spline curve $\vect{r}(t)$ is contained within $\mathcal{S}_l$. Taking now $l=1,\ldots,n_s$ and recalling that we consider a clamped knot vector \eqref{clamped} results in $\vect{r}(t)\in\mathcal{F},\ t\in[\tau_0,\tau_v)$. Further, note that $\mathcal{S}_l\cap \mathcal{S}_{l+1} \neq \emptyset$ allows $\vect{r}(t)$ to be continuous despite the segment-wise constraints. 
\end{proof}
% \begin{proposition} \label{convex_sets_prop}
% Suppose that there exist  overlapping convex sets $\{S_0,S_1,...,S_{N-d}\}$ inside the safe flight space, i.e., $S_l\subset \mathcal{F},\ l=0,\ldots,N-d,$ satisfying $S_i\cap S_{i+1}\neq \emptyset, \ i = 0,\ldots,N-d-1$. If the following condition hold:
% \begin{align}\label{eqobstc}
%     \vect{P}_j \in S_{l},\quad j = l,\ldots,l+d, \quad l = 0,\ldots,N-d.
% \end{align}
% then  $\vect{r}(t)\in\mathcal{F}$ for all $t\geq t_0$.
% \end{proposition}
% \XX{Can we change the number of convex sets to an arbitrary number $n$? Letting it to be $N-d$ is weird. Also, it is not common to write ``sufficient conditions for XX is XX'' in a theorem; using ``if...then'' is more common. Why use $\vect{s}(t)$ not $\vect{r}(t)$?}
% Sufficient conditions for $\vect{s}(t)\in\mathcal{F}$ are $\vect{s}(t)\in S_i, \ t\in[\tau_{i+d},\tau_{i+d+1}),\ i = 0,\ldots,N-d,$
% which states that the $i$-th segment of the B-spline curve should be contained within $S_i$. Note that $S_i\cap S_{i+1} \neq \emptyset$ allows $\vect{s}(t)$ to be continuous despite the segment-wise constraints. Then, applying Proposition \ref{inclusion_prop} for each local constraint results in the conditions of Proposition \ref{convex_sets_prop}.

% \XX{Your proof is always written in an ``reverse order'', which is not good. The proof is not a place to show your thinking process, but to let the reader understand more easily with a good flow. Don't refer to Proposition 2 in the proof of Prop. 2. You even used $s(t)$ in the proof. Doublecheck the proof above to make sure it is correct!}
%

Convex sets $\mathcal{S}_1,\ldots,\mathcal{S}_{n_s}$ in Proposition \ref{convex_sets_prop} play a significant role in the properties of the resulting B-spline curve (such as the magnitude of its derivatives). Finding these sets is considered the task of a high-level planner, and is out of the scope of this work. The reader is referred to \cite{tang2019time,sun2020fast} for more detail. Note that the constraints shown in  \eqref{eqobstc}  can be readily added to \eqref{FLAT-SOCP} without compromising the problem structure if the convex sets $\{\mathcal{S}_l\}$ are restricted to SOC form, as will be demonstrated with experiments in Section \ref{sec:exper}.

% \begin{remark}
% In Proposition \ref{convex_sets_prop}, the sequence of sets $S_l, \ l=0,\ldots,N-d$ plays a significant role in the properties of the resulting B-spline curve (such as the magnitude of its derivatives). The optimal finding of such sets is out of the scope of this work and the reader is referred to \cite{tang2019time,sun2020fast} for more information. 
% \end{remark}

\begin{remark}
In Section \ref{sec:track} we will establish safety guarantees for bounded reference position trajectory tracking in the infinity-norm sense. This bound at the tracking level can be incorporated when finding the sets $\{\mathcal{S}_l\}$ so that obstacle avoidance is guaranteed also during tracking. Alternatively, one can appropriately enlarge the size of obstacles $\{\mathcal{O}_i\}$.
\end{remark}

\section{Trajectory Tracking with Continuous-Time Safety Guarantee} \label{sec:track}
In this section, we propose a trajectory tracking method for quadcopters with position safety guarantees in continuous-time. The idea is to use CBFs as a safety filter to guarantee boundedness between the real trajectory and the nominal safe trajectory $\vect{r}^{\text{ref}}(t)$ generated from \eqref{FLAT-SOCP}. The safe tracking controller is obtained by solving a convex QP online, whose feasibility will be ensured by the control-sharing property of multiple CBFs. %We make use of a simplified dynamics model and provide feasibility guarantees of the safety filter.
\subsection{6-Dim Quadcopter Model \& Feedback Linearization}
We assume now that the angular velocity dynamics are regulated by some high-bandwidth controller and consider the reduced state and input vectors:%XX{replace $\vect{z}$ with another symbol.}
\begin{subequations}
\begin{align}
    \vect{z}&= \mat{x & y & z & \dot{x} & \dot{y} & \dot{z}}^T,\\
    \vect{v} &= \mat{T & \phi & \theta &  \psi}^T,\label{reduced_input}
\end{align}
\end{subequations}
subject to \eqref{dyn_model_acc}. Then, we can consider a set of virtual inputs $\vect{\mu} =(\mu_1,\mu_2,\mu_3)^T\triangleq (\ddot{x},\ddot{y},\ddot{z})^T$ found as in \eqref{dyn_model_acc}:
\begin{equation} \label{def_Psi}
    \vect{\mu} = \Psi(\vect{v}) \triangleq T\vect{z}_B-g\vect{z}_W.
\end{equation}
%\XX{Is $\Psi(\vect{v})$ in the equation above defined for the first time in this paper?}\VF{Map $\Psi$ defines the transformation of the reduced input $\vect{v}$ into virtual inputs $\vect{\mu}$. But it's not really new, $\Psi$ is exactly \eqref{dyn_model_acc} and this transformation is also used in \cite{FMPC,cbf_pos} as cited at the end of the paragraph.} \XX{Then mention where $\Psi$ is defined.} \VF{Defined in \eqref{def_Psi}} 
Note that the map is invertible if we recover the yaw angle such that  $\vect{v} = \Psi^{-1}(\vect{\mu},\psi)$ where  $\Psi^{-1}$ is given as: %\XX{Are the expressions correct?} \VF{Yes, they are just more explicit forms of parts of the map \eqref{flat_map}. Notice that \eqref{flat_map} also depends on $\psi$ because it is a flat output.}
\begin{subequations}
\begin{align}
    T &= \sqrt{\mu_1^2 + \mu_2^2 + (\mu_3+g)^2} = \frac{\mu_3+g}{\cos\phi \cos\theta},\label{inverpsi1}\\
    \phi &= \arctan\Big(\frac{\mu_1\sin\psi-\mu_2\cos\psi}{(\mu_3+g)\cos\theta}\Big),\label{inverpsi2}\\
    \theta&= \arctan\Big(\frac{\mu_1 \cos\psi + \mu_2 \sin\psi}{\mu_3+g}\Big).\label{inverpsi3}
\end{align}
\end{subequations}
% \XX{What does this mean? Why $\psi$ appears in the function domain of $\Psi^{-1}$? no $\psi$ appears in $\vect{\mu} = \Psi(\vect{v})$} \VF{Notice that $\vect{v}\in\mathbb{R}^4$ but $\vect{\mu}\in\mathbb{R}^3$. Then, the transformation $\vect{\mu} =\Psi(\vect{v})$ (where $\Psi$ is unrelated to $\psi$ loses some information ($\psi$) that must be recovered (fourth element of $\vect{v}$ i.e. $v_4$) to invert the map as $\Psi^{-1}$. The process is: Nominal controller gets $\vect{v}$. We then store $\psi=v_4$ and transform to virtual inputs $\vect{\mu} = \Psi(\vect{v})$. We can now solve \eqref{CBF_QP} and use the stored $\psi$ to get the safe control $\vect{v}_s = \Psi^{-1}(\vect{\mu}^*,\psi)$}\XX{It does not make sense. What makes sense to me is $\vect{v} = (\Psi^{-1}(\vect{\mu}),\psi)$ where $\psi$ is defined in (??)}\VF{Here is the expression for $\Psi^{-1}$: \begin{align}
%     T &= \frac{\mu_3+g}{c\phi c\theta},\\
%     \phi &= \arctan(\frac{\mu_1s\psi-\mu_2c\psi}{(\mu_3+g)c\theta}),\\
%     \theta&= \arctan(\frac{\mu_1 c\psi + \mu_2 s\psi}{\mu_3+g}).
% \end{align}
%  You can see that we cannot compute $\phi$ and $\theta$ without having $\psi$. The value of $\psi$ is the one provided by the nominal controller $\pi(\vect{z})$. If you want, instead of $\Psi^{-1}$ since it is not as simple as just the inverse, we can call it a different function altogether: $\Theta(\vect{\mu},\pi(\vect{z}))$ or $\Theta(\vect{\mu},\psi)$}. 
With the virtual inputs, the quadcopter dynamics becomes a double-integrator \cite{FMPC,cbf_pos}: 
\begin{equation} \label{lin_sys}
    \dot{\vect{z}}(t) = f(\vect{z}(t)) + g(\vect{z}(t)) \vect{\mu}(t) =  A\vect{z}(t) + B\vect{\mu}(t)
\end{equation}
where 
%$A = \mat{\vect{0}_3 & I_3\\ \vect{0}_3 & \vect{0}_3}, \quad B=\mat{\vect{0}_3\\ I_3}.$ 
\begin{equation*}
    A = \mat{\vect{0}_3 & I_3\\ \vect{0}_3 & \vect{0}_3}, \quad B=\mat{\vect{0}_3\\ I_3}.
\end{equation*}
Assume that a reference state $\vect{x}^{\text{ref}}(t)$ and a reference input $\vect{u}^{\text{ref}}(t)$ are generated as in Sec. \ref{sec:traj} and note that we can immediately extract $\vect{z}^{\text{ref}}(t)$ and $\vect{v}^{\text{ref}}(t)$ from them. The dynamic feasibility of the reference trajectory will be necessary to establish the forthcoming results. Assume also a \emph{nominal controller} $\vect{v} = \pi(\vect{z})$ is given, which is potentially unsafe. In the following subsection, a CBF-QP-based tracking controller will be designed such that the real trajectory lies within a prescribed tube of the nominal trajectory while the tracking control is close to the nominal controller as much as possible.  
%We will use the CBF-QP-based method to filter the nominal controller such that the real trajectory is in a prescribed tube of the nominal trajectory while the tracking control is close to the nomial controller as much as possible.  % but will be modified to be safe in the following section.
% \XX{Where is $\vect{u}^{\text{ref}}(t)$ used? how is $\vect{v} $ obtained and where is it used? }

\subsection{Safe Trajectory Tracking via CBF-QP Controller}
Consider the following time-varying safe set:
\begin{equation}\label{eqS}
    \mathcal{S}_h(t) = \{\vect{z}\in\mathbb{R}^6: \|\vect{r}-\vect{r}^{\text{ref}}(t) \|_{\infty} \leq \delta\}, \quad t\in[\tau_0, \tau_v),
\end{equation}
%  $$
% S = \{\vect{z}\mid \begin{pmatrix}-\delta_{\underline{x}}\\-\delta_{\underline{y}}\\-\delta_{\underline{z}}\end{pmatrix}\leq \vect{r}(t)-\vect{r}^r(t)\leq \begin{pmatrix}\delta_{\bar{x}}\\\delta_{\bar{y}}\\\delta_{\bar{z}}\end{pmatrix}\}
% $$ 
where $\delta>0$ represents the maximum allowable deviation in each axis. That is, the trajectory is considered as safe if it is within a \emph{safe tube} of radius $\delta$ around the nominal trajectory. Note that it is straightforward to generalize the following results to having different maximum bounds for each axis. 

%In order to ensure $\vect{z}(t)\in\mathcal{S}(t),\ t\in[\tau_0,\tau_v)$, we define candidate CBFs:
%$ h(\vect{z},t)\geq \vect{0}$
% \begin{align} \label{CBF}
%     \vect{h}(\vect{z},t) &= \mat{h_{\overline{x}} & h_{\overline{y}}& h_{\overline{z}} & h_{\underline{x}} &
%     h_{\underline{y}} & h_{\underline{z}}}^T\nonumber\\
%     &=\delta\vect{1}_6 + \mat{(\vect{r}^{\text{ref}}(t)-\vect{r})^T & (\vect{r}-\vect{r}^{\text{ref}}(t))^T}^T,
% \end{align}
To ensure $\vect{z}(t)\in\mathcal{S}_h(t),\; \forall t\in[\tau_0,\tau_v)$, we define the following 6 candidate CBFs for $q\in\{x,y,z\}$:
\begin{equation} \label{CBF}
   %\vect{h}_q(\vect{z},t) = 
   \mat{h_{\overline{q}}\\h_{\underline{q}}} \triangleq\mat{\delta+ q^{\text{ref}}(t)-q(t)\\\delta+q(t)-q^{\text{ref}}(t)}.%, \; q\in\{x,y,z\}.
\end{equation}
Note that all functions $h_{\overline{q}},h_{\underline{q}},\ q\in\{x,y,z\}$ have relative degree 2. % and are $\mathcal{C}^2$ continuous.
%for $d_h \geq 0$ representing the width of the tube around the reference trajectory within which the quadcopter must remain. 
% Note that each CBF in $\vect{h}(\vect{z},t)$ has relative degree $2$.
According to Definition \ref{dfn:cbfhigh}, we choose constants $a_1,a_2$ such that  the roots of $\la^2+a_1\la+a_2$ are reals and negative, and the CBF conditions of relative degree 2 are satisfied for $h_{\overline{q}},h_{\underline{q}},\forall q\in\{x,y,z\}$.  For example,  the CBF condition for $h_{\overline{x}}$ is
\begin{align*}
L_g\bar L_fh_{\overline{x}}\vect{\mu} +\bar L_f^2h_{\overline{x}}+a_1\bar L_fh_{\overline{x}}+a_2h_{\overline{x}} \geq 0,    
\end{align*}
which is equivalent to
\begin{align}\label{cbfx}
\phi_{\overline{x}}^1\vect{\mu}+\phi_{\overline{x}}^2\leq 0,
\end{align}
where
\begin{align*}
    \phi_{\overline{x}}^1&=\vect{x}_W^T,\\
    \phi_{\overline{x}}^2&=-\ddot{x}^{\text{ref}}(t) + a_1(\dot{x}-\dot{x}^{\text{ref}}(t))+a_2(x -x^{\text{ref}}(t)-\delta).
\end{align*}
Putting 6 CBF conditions (for $h_{\overline{x}},h_{\underline{x}},h_{\overline{y}},h_{\underline{y}},h_{\overline{z}},h_{\underline{z}}$), which are all linear constraints on $\vect{\mu}$ for fixed $\vect{z}$, into a QP, and choosing the objective function such that  $\|\Psi(\pi(\vect{z})) - \vect{\mu}\|_2^2$ is minimized, where $\Psi(\pi(\vect{z}))$ yields the nominal virtual inputs, we have the following CBF-QP:
\begin{align} \label{CBF_QP}
\tag{CBF-QP}
    \min_{\vect{\mu}} \quad & \|\Psi(\pi(\vect{z})) - \vect{\mu}\|_2^2\\
    \text{s.t.} \quad &  \phi_{\overline{q}}^1\vect{\mu}+\phi_{\overline{q}}^2\leq 0,\nonumber\\ 
     &\phi_{\underline{q}}^1\vect{\mu}+\phi_{\underline{q}}^2\leq 0,\quad q\in\{x,y,z\},\nonumber
    % \quad & \ddot{h}_i(\vect{z},t,\vect{\mu}) + \vect{k}^T\vect{\eta}_i(\vect{z},t) \geq 0, \quad i\in\{\overline{x},\overline{y},\overline{z},\underline{x},\underline{y},\underline{z}\},\nonumber\\ &\ddot{h}_i(\vect{z},t,\vect{\mu}) = L_f^2h_i(\vect{z}) + L_gL_fh_i(\vect{z})\vect{\mu} + \frac{\partial^2 h_i}{\partial t^2}(t), \nonumber
\end{align}
where the virtual input $\vect{\mu}$ is the decision variable, 
\begin{subequations}
\begin{align}
    \phi_{\overline{q}}^1&=\vect{q}_W^T,\label{phiup1}\\
    \phi_{\overline{q}}^2&=-\ddot{q}^{\text{ref}}(t) + a_1(\dot{q}-\dot{q}^{\text{ref}}(t))+a_2(q-q^{\text{ref}}(t)-\delta),\label{phiup2}\\
    \phi_{\underline{q}}^1&=-\vect{q}^T_W,\label{philow1}\\
    \phi_{\underline{q}}^2&=\ddot{q}^{\text{ref}}(t) + a_1(\dot{q}^{\text{ref}}(t)-\dot{q})+a_2(q^{\text{ref}}(t)-q-\delta),\label{philow2}
\end{align}
\end{subequations}

% \begin{align} \label{CBF_QP}
%     \min_{\vect{\mu}} \quad & \|\Psi(\pi(\vect{z})) - \vect{\mu}\|_2^2\\
%     \text{s.t.} \quad &  \ddot{e}_q(\vect{\mu},t)+a_1\dot{e}_q(\vect{z},t) + a_2e_q(\vect{z},t)\leq a_2 \delta,\nonumber\\ 
%      &\ddot{e}_q(\vect{\mu},t)+a_1\dot{e}_q(\vect{z},t) + a_2e_q(\vect{z},t)\geq -a_2 \delta,\nonumber\\
%     &\quad q\in\{x,y,z\},\nonumber
%     % \quad & \ddot{h}_i(\vect{z},t,\vect{\mu}) + \vect{k}^T\vect{\eta}_i(\vect{z},t) \geq 0, \quad i\in\{\overline{x},\overline{y},\overline{z},\underline{x},\underline{y},\underline{z}\},\nonumber\\ &\ddot{h}_i(\vect{z},t,\vect{\mu}) = L_f^2h_i(\vect{z}) + L_gL_fh_i(\vect{z})\vect{\mu} + \frac{\partial^2 h_i}{\partial t^2}(t), \nonumber
% \end{align}
%where $e_q(\cdot,t) = q(t)-q^{\text{ref}}(t)$, $\Psi(\pi(\vect{z}))$ is ??, 
with $\vect{q}_W$ representing the appropriate world-frame's axis (see Figure \ref{fig:Coord_Frames}). Note that by the dynamic feasibility of the reference trajectory, we can extract $\ddot{q}^{\text{ref}}(t)$ from:
\begin{equation}
    \dot{\vect{z}}^{\text{ref}}(t) = A\vect{z}^{\text{ref}}(t) + B\Psi(\vect{v}^{\text{ref}}(t)), \quad t\in[\tau_0,\tau_v). 
\end{equation}
%and $k_1,k_2 \in \mathbb{R}$ are safety parameters whose choice is described in the following proposition. 
% designed as in \cite{exp_cbf}.
% $\vect{\eta}_i(\vect{z},t) =\mat{h_i(\vect{z},t) & \dot{h}_i(\vect{z},t)}^T \in \mathbb{R}^2$, and Lie derivatives $L_fh_i(\vect{z}) = (\partial h_i/\partial \vect{z})f(\vect{z})$.

The \eqref{CBF_QP} is convex and can be solved in polynomial-time by QP solvers such as MOSEK \cite{mosek} or OSQP \cite{osqp}. Letting $\vect{\mu}^*(t)$ be the solution of \eqref{CBF_QP} for any $(t,\vect{z})\in[\tau_0,\tau_v)\times\mathbb{R}^6$, the safe input to the quadcopter is then  given as 
\begin{equation}\label{safe_input}
    \vect{v}_s(t) = \Psi^{-1}(\vect{\mu}^*(t),\psi(t)),
\end{equation}
where $\psi(t)$ is recovered from $\vect{v}(t) = \pi(\vect{z}(t))$.%\XX{\eqref{safe_input} does not make sense as in the previous page.}
%\XX{$\psi(t)$ is not set to be zero as in the previous section?} \VF{The reference $\psi^{ref}(t)=0$ from the previous section (although this is not required for any result in this paper because the bounds we provide are $\psi$-independent). We allow the nominal controller to specify $\psi(t)\neq0$ which is more general, but this input is unchecked and bypasses the \eqref{CBF_QP} because there is no safety specification/requirement for it. It is completely decoupled. We simply need to use $\psi(t)$ to recover the input $\vect{v}$ from the virtual input $\vect{\mu}$.}

%As QP \eqref{CBF_QP} has multiple CBF constraints, an important question is the feasibility of  \eqref{CBF_QP}. 
The following result shows sufficient conditions that ensure the forward invariance of the set $\mathcal{S}_h(t)$, and feasibility of the \eqref{CBF_QP} with multiple CBF constraints.
% \XX{Remember that $\vect{h}(\vect{z},t)$ is a vector. The constraints in \eqref{CBF_QP} is confusing and needs to be clarified. We can write CBFs for $h_i$ where $i=\{\bar x,...\}$.}

\begin{theorem} \label{CBF_QP_Feasibility}
Consider the system given in \eqref{lin_sys}. If constants $a_1,a_2>0$ are chosen such that  the roots of $\la^2+a_1\la+a_2$ are negative reals $-\lambda_1,-\lambda_2<0$, 
%i.e., the roots are reals and negative, 
%,  the functions $h_{\overline{q}}, h_{\underline{q}}$, $ q\in\{x,y,z\}$ \eqref{CBF} are all CBFs,
%\VF{Is there a way to guarantee that they are CBFs? as in Def 1 of your paper? instead of assuming they are.}, 
%and $k_1,k_2\in\mathbb{R}$ are such that the roots of quadratic polynomial $p(\lambda)=\lambda^2+k_1\lambda+k_2$ are $-\lambda_1,-\lambda_2<0$  and real, and $k_2>0$, 
then \eqref{CBF_QP} is  feasible for all $(t,\vect{z})\in[\tau_0,\tau_v)\times\mathbb{R}^6$. Furthermore, if 
$\|\vect{r}(\tau_0)-\vect{r}^{\text{ref}}(\tau_0) \|_{\infty} \leq \delta$, $\|\dot{\vect{r}}(\tau_0)-\dot{\vect{r}}^{\text{ref}}(\tau_0)+\lambda_1(\vect{r}(\tau_0)-\vect{r}^{\text{ref}}(\tau_0)) \|_{\infty} \leq \lambda_1\delta$,
%$|q(\tau_0)-q^{\text{ref}}(\tau_0)|<\delta$, $|\dot{q}(\tau_0)-\dot{q}^{\text{ref}}(\tau_0)+\lambda_1q(\tau_0)-\lambda_1q^{\text{ref}}(\tau_0)|< \delta\lambda_1,\ \forall  q\in\{x,y,z\}$, 
and $\vect{\mu}^*(t)$ from \eqref{CBF_QP} is locally Lipchitz in $\vect{z}$, then the input $\vect{v}_s(t)$  will render $\|\vect{r}(t)-\vect{r}^{\text{ref}}(t) \|_{\infty} \leq \delta$ for all $t \in [\tau_0,\tau_v)$. Therefore, $\vect{z}(t)\in\mathcal{S}_h(t)$ for all $t\in[\tau_0,\tau_v)$.
% If the pair ($h_{\overline{q}}$, $h_{\underline{q}}$) satisfies the conditions of Theorem 2 \XX{Specific} in \cite{xu2018constrained} for all $q\in$\{x, y, z\}, then the cbf functions grouped in $\vect{h}$ are control-sharing barrier functions. Furthermore, if $\vect{z}(0)\in S(0)$ and $\vect{\dot z}(0)\in S(0)$, then $\vect{v}_s(t)$ will make $\mathcal{S}(t)$ control invariant when $t \in [\tau_0,\tau_v)$.
\end{theorem}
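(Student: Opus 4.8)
The plan is to split the argument into two parts: (i) feasibility of \eqref{CBF_QP} at every $(t,\vect z)$, and (ii) forward invariance of $\mathcal{S}_h(t)$ via Lemma \ref{thm:zcbf} together with the control-sharing property of the six CBFs. For part (i), I would observe that the six CBF constraints in \eqref{CBF_QP} couple only through the world-frame axes $\vect q_W$, which are mutually orthogonal, so the constraint on $\vect\mu$ coming from the pair $(h_{\overline q},h_{\underline q})$ involves \emph{only} the component $\mu_q = \vect q_W^T\vect\mu$. Explicitly, adding $\phi_{\overline q}^1\vect\mu+\phi_{\overline q}^2\le 0$ and $\phi_{\underline q}^1\vect\mu+\phi_{\underline q}^2\le 0$ and using $\phi_{\overline q}^1 = -\phi_{\underline q}^1 = \vect q_W^T$ gives $\phi_{\overline q}^2 + \phi_{\underline q}^2 = -2a_2\delta \le 0$, so the two half-line constraints on $\mu_q$ are compatible and carve out a nonempty interval; doing this for $q\in\{x,y,z\}$ shows the feasible set is a nonempty box in $\vect\mu$-space. (This is exactly the control-sharing property of Definition 2 in \cite{xu2018constrained} verified by hand for these particular CBFs.) Hence \eqref{CBF_QP} is feasible for all $(t,\vect z)\in[\tau_0,\tau_v)\times\R^6$.

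For part (ii), I would apply Lemma \ref{thm:zcbf} to each of the six CBFs separately. Each $h_{\overline q}, h_{\underline q}$ has relative degree $2$ for the double-integrator \eqref{lin_sys}, and the choice of $a_1,a_2>0$ with real negative roots $-\lambda_1,-\lambda_2$ satisfies the spectral hypothesis of Definition \ref{dfn:cbfhigh}. Since $\vect\mu^*(t)$ solves \eqref{CBF_QP}, it lies in $\mathcal{K}_{h_{\overline q}}(t,\vect z)\cap\mathcal{K}_{h_{\underline q}}(t,\vect z)$ for all $q$, and by assumption it is locally Lipschitz in $\vect z$; thus the controller $\vect v_s(t)=\Psi^{-1}(\vect\mu^*(t),\psi(t))$ realizes, through \eqref{def_Psi}, exactly the virtual input $\vect\mu^*(t)$, so condition $u(t,\vect x)\in\mathcal{K}_h$ of Lemma \ref{thm:zcbf} holds for every one of the six barriers. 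It remains to check the initial-condition hypotheses $\nu_0(\tau_0,\vect z_0)\ge 0$, $\nu_1(\tau_0,\vect z_0)\ge 0$ of \eqref{liftedh} for each barrier. For $h_{\overline q}$, $\nu_0 = \delta + q^{\text{ref}}-q$ and $\nu_1 = \dot\nu_0+\lambda_1\nu_0 = \delta\lambda_1 + (\dot q^{\text{ref}}-\dot q) + \lambda_1(q^{\text{ref}}-q)$, with the analogous (sign-flipped) expressions for $h_{\underline q}$; the two stated hypotheses $\|\vect r(\tau_0)-\vect r^{\text{ref}}(\tau_0)\|_\infty\le\delta$ and $\|\dot{\vect r}(\tau_0)-\dot{\vect r}^{\text{ref}}(\tau_0)+\lambda_1(\vect r(\tau_0)-\vect r^{\text{ref}}(\tau_0))\|_\infty\le\lambda_1\delta$ are precisely the $\infty$-norm packaging of $\nu_0(\tau_0)\ge 0$ and $\nu_1(\tau_0)\ge 0$ for all six barriers simultaneously. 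Lemma \ref{thm:zcbf} then yields $h_{\overline q}(t,\vect z(t))\ge 0$ and $h_{\underline q}(t,\vect z(t))\ge 0$, i.e. $|q(t)-q^{\text{ref}}(t)|\le\delta$, for all $q\in\{x,y,z\}$ and all $t\in[\tau_0,\tau_v)$, which is $\|\vect r(t)-\vect r^{\text{ref}}(t)\|_\infty\le\delta$ and hence $\vect z(t)\in\mathcal{S}_h(t)$.

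The step I expect to be the main obstacle — or at least the one requiring the most care — is making the reduction from the abstract control-sharing/CBF machinery of \cite{xu2018constrained} to the concrete double-integrator here fully rigorous: one must confirm that the closed-loop vector field $\vect z\mapsto A\vect z + B\vect\mu^*(t,\vect z)$ is (locally Lipschitz, hence) forward-complete on $[\tau_0,\tau_v)$ so that a solution exists and Lemma \ref{thm:zcbf} genuinely applies, and that the feedback-linearizing map $\Psi^{-1}$ is well-defined along the trajectory (in particular $\mu_3(t)+g>0$, equivalently $T(t)>0$, so that $\phi,\theta$ in \eqref{inverpsi2}--\eqref{inverpsi3} are defined). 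I would dispatch the former via the local Lipschitz assumption on $\vect\mu^*$ plus the box-constraint bound on $\vect\mu^*$ (which gives a linear growth bound, precluding finite-time escape), and note the latter holds whenever the reference thrust plus the bounded safety correction keeps $T$ positive, which is automatic for any physically reasonable $\delta$ and reference. The remaining computations — expanding $\bar L_f^2 h$, verifying \eqref{cbfx}, and writing out $\nu_0,\nu_1$ — are routine and I would present them compactly.
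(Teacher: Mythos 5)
Your proposal is correct and follows essentially the same route as the paper: feasibility via the control-sharing computation $\phi_{\overline q}^2+\phi_{\underline q}^2=-2a_2\delta<0$ showing each axis contributes a nonempty interval constraint on its own component of $\vect{\mu}$ (the paper cites condition (15) of \cite{xu2018constrained} in the proof and gives your explicit box-nonemptiness version in the remark immediately after), and invariance by translating the two $\infty$-norm initial conditions into $\nu_0(\tau_0)\ge 0$, $\nu_1(\tau_0)\ge 0$ for all six barriers and invoking Lemma \ref{thm:zcbf}. Your added remarks on forward-completeness of the closed loop and well-definedness of $\Psi^{-1}$ go slightly beyond what the paper checks, but do not change the argument.
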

\begin{proof}
The candidate CBFs defined in \eqref{CBF} aim to ensure $q^{\text{ref}}(t)-\delta \triangleq \underline q(t)\leq q(t)\leq \overline q(t)\triangleq q^{\text{ref}}+\delta$, for $q\in \{x,y,z\}$. Recalling that $\vect{\mu} =[\mu_1,\mu_2,\mu_3]^T$, the system model shown in \eqref{lin_sys} is equivalent to $\ddot x=\mu_1$, $\ddot y=\mu_2$, $\ddot z=\mu_3$. Thus, the axes are decoupled which enable us to consider each axis individually. It is easy to see that for $q\in \{x,y,z\}$, $\overline q-\underline q=2\delta$, $\dot{\overline q}-\dot{\underline q}=0$, and $\ddot{\overline q}-\ddot{\underline q}=0$. Since $a_1,a_2>0$, we have $\ddot{\overline q}-\ddot{\underline q}+a_1(\dot{\overline q}-\dot{\underline q})+a_2(\overline q-\underline q)=2a_2\delta>0$. Therefore, condition (15) in Theorem 2 of \cite{xu2018constrained} is satisfied, which implies that $h_{\overline{q}},h_{\underline{q}}$ are control sharing barrier functions. It is easy to see that  functions $h_{\overline{x}},h_{\underline{x}},h_{\overline{y}},h_{\underline{y}},h_{\overline{z}},h_{\underline{z}}$ are all control sharing barrier functions, which means that \eqref{CBF_QP} is  feasible for all $(t,\vect{z})\in[\tau_0,\tau_v)\times\mathbb{R}^6$. Since $\|\vect{r}(\tau_0)-\vect{r}^{\text{ref}}(\tau_0) \|_{\infty} \leq \delta$, $\|\dot{\vect{r}}(\tau_0)-\dot{\vect{r}}^{\text{ref}}(\tau_0)+\lambda_1(\vect{r}(\tau_0)-\vect{r}^{\text{ref}}(\tau_0)) \|_{\infty} \leq \lambda_1\delta$, we have $|q(\tau_0)-q^{\text{ref}}(\tau_0)|\leq\delta$, 
$|\dot{q}(\tau_0)-\dot{q}^{\text{ref}}(\tau_0)+\lambda_1q(\tau_0)-\lambda_1q^{\text{ref}}(\tau_0)|\leq \lambda_1\delta,\ \forall  q\in\{x,y,z\}$, which is equivalent to $h_{\overline{q}}(\tau_0)\geq 0$, $h_{\underline{q}}(\tau_0)\geq0$ and  $\dot h_{\overline{q}}(\tau_0)+\lambda_1 h_{\overline{q}}(\tau_0)\geq0$, $\dot h_{\underline{q}}(\tau_0)+\lambda_1h_{\underline{q}}(\tau_0)\geq0$ for $q\in\{x,y,z\}$. Therefore,  $h_{\overline{q}}(t)\geq 0$, $h_{\underline{q}}(t)\geq 0$ for all $t \in [\tau_0,\tau_v)$ and $q\in\{x,y,z\}$ by \cite{xu2018constrained}, which implies that  $\|\vect{r}(t)-\vect{r}^{\text{ref}}(t) \|_{\infty} \leq \delta$ holds for $t \in [\tau_0,\tau_v)$. The final conclusion follows by the definition of set $\mathcal{S}_h(t)$.
\end{proof}

% \begin{proof}\XX{Make the proof more clear.}
%   Suppose the pair $(h_{\overline{q}}$, $h_{\underline{q}})$ satisfies the conditions of Theorem 2 in \cite{xu2018constrained}. Then, $h_{\overline{q}}$ and $h_{\underline{q}}$ are CSBFs and allow to define a nonempty set of admissible controls $\mathcal{K}_q$ as in \cite{xu2018constrained}. Consider the decoupled double integrator system \eqref{lin_sys}:
%   \begin{equation}
%       \dot{\vect{q}} = \mat{0 & 1\\ 0 & 0}\vect{q} + \mat{0\\1}\ddot{q},\quad \vect{q} = \mat{q & \dot{q}}^T.
%   \end{equation}
%   It can be shown that the set of admissible controls $\mathcal{K}_q$ imposes restrictions only on input $\ddot{q}$. Therefore, we must have that $\mathcal{K}_x\cap\mathcal{K}_y\cap\mathcal{K}_z\neq \emptyset$ concluding the proof.
% \end{proof}

% \begin{remark}
% In \eqref{CBF_QP}, the input bounds for the decision variable $\vect{\mu}$ is not considered
% \end{remark}

\begin{remark}
The feasibility of \eqref{CBF_QP} can be also seen from the non-emptiness of its admissible input set. 
For $q\in\{x,y,z\}$, define the respective admissible input sets corresponding to CBFs $h_{\overline{q}}$ and $h_{\underline{q}}$ as $\mathcal{K}_{\overline{q}}=\{\vect{\mu}\mid \phi_{\overline{q}}^1\vect{\mu}+\phi_{\overline{q}}^2\leq 0\},\mathcal{K}_{\underline{q}}=\{\vect{\mu}\mid \phi_{\underline{q}}^1\vect{\mu}+\phi_{\underline{q}}^2\leq 0\}$, respectively.  
Note that $-\phi_{\overline{q}}^{2}\geq\phi_{\underline{q}}^2$ holds $\forall q\in\{x,y,z\}$ because $-\phi_{\overline{q}}^2 - \phi_{\underline{q}}^2=2a_2\delta>0$. Since $\mathcal{K}_{x}\triangleq\mathcal{K}_{\overline{x}}\cap \mathcal{K}_{\underline{x}}=\{\vect{\mu}\mid \phi_{\underline{x}}^2\leq \mu_1\leq -\phi_{\overline{x}}^2\}$, we have $\mathcal{K}_{x}\neq\emptyset$. Similarly, 
$\mathcal{K}_{y}\triangleq\mathcal{K}_{\overline{y}}\cup \mathcal{K}_{\underline{y}}=\{\vect{\mu}\mid \phi_{\underline{y}}^2\leq \mu_2\leq -\phi_{\overline{y}}^2\}\neq\emptyset$, and
$\mathcal{K}_{z}\triangleq\mathcal{K}_{\overline{z}}\cup \mathcal{K}_{\underline{z}}=\{\vect{\mu}\mid \phi_{\underline{z}}^2\leq \mu_3\leq -\phi_{\overline{z}}^2\}\neq\emptyset$. It is easy to see that $\mathcal{K}_{x}\cap\mathcal{K}_{y}\cap\mathcal{K}_{z}\neq\emptyset$ as
 $\mathcal{K}_{x},\mathcal{K}_{y},\mathcal{K}_{z}$ impose constraints on $\mu_1,\mu_2,\mu_3$, respectively. Therefore, the admissible input set  of \eqref{CBF_QP} is always non-empty, which guarantees the feasibility of \eqref{CBF_QP}.
\end{remark}

\begin{remark} \label{sdcbf}
The safety guarantee provided by Theorem \ref{CBF_QP_Feasibility} is predicated on the assumption that the control input generated by the \eqref{CBF_QP} can be updated continuously. When a sampled-data controller is implemented in practice, it is still possible to provide the same safety guarantee in continuous time, if we make a slight modification to the CBF conditions of \eqref{CBF_QP}. We refer the readers to our recent work \cite{zhang21interval} that proposes  %computationally efficient algorithms that considers 
sampled-data CBFs suitable for real-time computation, and demonstrates the related algorithms using experiments in real hardware.
%However, implementing the CBF-QP \eqref{CBF_QP} in real hardware is necessarily done in discrete-time. The degradation of the CBF safety guarantees due to discrete-time implementation is an active field of research and addressed in recent works such as \cite{breeden2021control,singletary2020control}.
\end{remark}

For the trajectory obtained from \eqref{CBF_QP}, we can also quantify the upper bounds of $\|\dot{\vect{r}}(t) -\dot{\vect{r}}^{\text{ref}}(t)\|_{\infty}$ and $\|\vect{\mu} - \ddot{\vect{r}}^{\text{ref}}(t)\|_{\infty}$ as shown in the following two corollaries.

\begin{corollary} \label{bounded_vel_corollary}
If the conditions of Theorem \ref{CBF_QP_Feasibility} are satisfied and $\|\dot{\vect{r}}(\tau_0) -\dot{\vect{r}}^{\text{ref}}(\tau_0)\|_{\infty} \leq \frac{2\delta a_2}{a_1}$, then
\begin{equation} \label{cbf_vel_bounds}
    \|\dot{\vect{r}}(t) -\dot{\vect{r}}^{\text{ref}}(t)\|_{\infty} \leq \frac{2\delta a_2}{a_1}
\end{equation}
holds for any $t\in[\tau_0,\tau_v)$. 
\end{corollary}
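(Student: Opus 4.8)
The plan is to reduce the $\infty$-norm velocity bound to three scalar bounds, one per axis, and establish each by a first-order comparison argument layered on top of Theorem \ref{CBF_QP_Feasibility}. Since the feedback-linearized model \eqref{lin_sys} is fully decoupled ($\ddot x=\mu_1,\ \ddot y=\mu_2,\ \ddot z=\mu_3$), it suffices to prove $|\dot q(t)-\dot q^{\text{ref}}(t)|\le\frac{2\delta a_2}{a_1}$ for each $q\in\{x,y,z\}$ separately. Writing $e_q\triangleq q-q^{\text{ref}}$, I would first record the two facts that hold along the closed-loop trajectory under the hypotheses of Theorem \ref{CBF_QP_Feasibility} (which are inherited here): (i) the position barrier bound $|e_q(t)|\le\delta$ for all $t\in[\tau_0,\tau_v)$; and (ii) the two CBF rows of \eqref{CBF_QP} evaluated at the optimizer $\vect{\mu}^*(t)$, which, using $\vect{q}_W^T\vect{\mu}=\mu_q=\ddot q$, become in error coordinates $-a_2\delta\le \ddot e_q+a_1\dot e_q+a_2 e_q\le a_2\delta$.

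Next I would introduce the two auxiliary ``velocity-error'' functions $g_{\overline q}\triangleq\frac{2\delta a_2}{a_1}-\dot e_q$ and $g_{\underline q}\triangleq\frac{2\delta a_2}{a_1}+\dot e_q$, so that \eqref{cbf_vel_bounds} is equivalent to $g_{\overline q}(t)\ge0$ and $g_{\underline q}(t)\ge0$ for all $t$. Differentiating $g_{\overline q}$, substituting the upper CBF inequality from (ii) and the identity $a_1\dot e_q=2\delta a_2-a_1 g_{\overline q}$, and collecting terms gives $\dot g_{\overline q}=-\ddot e_q\ge a_2(\delta+e_q)-a_1 g_{\overline q}$; the crucial point is that the cross term $a_2(\delta+e_q)$ is nonnegative by fact (i) ($e_q\ge-\delta$), so it can be dropped to obtain the linear differential inequality $\dot g_{\overline q}\ge -a_1 g_{\overline q}$. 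A symmetric computation with the lower CBF inequality and $e_q\le\delta$ yields $\dot g_{\underline q}\ge -a_1 g_{\underline q}$.

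Finally, since $\vect{\mu}^*(t)$ is locally Lipschitz in $\vect{z}$ and $\vect{z}(\cdot)$ solves a locally Lipschitz ODE, $g_{\overline q}$ and $g_{\underline q}$ are absolutely continuous, and the differential inequalities above give $\frac{d}{dt}\big(e^{a_1 t}g_{\overline q}(t)\big)\ge0$ and $\frac{d}{dt}\big(e^{a_1 t}g_{\underline q}(t)\big)\ge0$; hence, by the comparison lemma, $g_{\overline q}(t)\ge g_{\overline q}(\tau_0)e^{-a_1(t-\tau_0)}$ and likewise for $g_{\underline q}$. The added hypothesis $\|\dot{\vect{r}}(\tau_0)-\dot{\vect{r}}^{\text{ref}}(\tau_0)\|_\infty\le\frac{2\delta a_2}{a_1}$ is exactly $g_{\overline q}(\tau_0)\ge0$ and $g_{\underline q}(\tau_0)\ge0$, so both remain nonnegative on $[\tau_0,\tau_v)$, which is \eqref{cbf_vel_bounds}. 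The main obstacle I anticipate is precisely the sign-indefinite cross term produced by differentiating $g_{\overline q}$ (resp. $g_{\underline q}$): it has to be neutralized using the already-proved position barrier bound $|e_q|\le\delta$ from Theorem \ref{CBF_QP_Feasibility} at exactly the right step, and it is this coupling between the position-level and velocity-level invariants that makes the sharper constant $2\delta a_2/a_1$ attainable rather than the weaker bound obtainable from the first-order invariant $\nu_1\ge0$ alone; a secondary, minor point is verifying enough regularity of $\vect{\mu}^*(t)$ along trajectories for the comparison lemma to apply.
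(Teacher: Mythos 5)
Your proof is correct and follows essentially the same route as the paper: your $g_{\overline q},g_{\underline q}$ are exactly the paper's auxiliary functions $\hat h_{\overline q},\hat h_{\underline q}$, the key step of absorbing the position term via $|q-q^{\text{ref}}|\le\delta$ to weaken the CBF rows to $\ddot q \lessgtr \ddot q^{\text{ref}}+a_1(\dot q^{\text{ref}}-\dot q)\pm 2a_2\delta$ is the same, and your comparison-lemma finish is just the explicit form of the first-order CBF invariance argument the paper invokes.
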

\begin{proof}
Any feasible solution $\vect{\mu} = (\ddot{x},\ddot{y},\ddot{z})^T$ of \eqref{CBF_QP} satisfies:
\begin{subequations}
\begin{align}
    \ddot{q}&\leq \ddot{q}^{\text{ref}}+a_1(\dot{q}^{\text{ref}}-\dot{q})+a_2(q^{\text{ref}}-q+\delta),\label{veleq1}\\
    \ddot{q}&\geq \ddot{q}^{\text{ref}} + a_1(\dot{q}^{\text{ref}}-\dot{q}) + a_2(q^{\text{ref}} -q - \delta),\label{veleq2}
\end{align}
\end{subequations}
for $q\in\{x,y,z\}$. Since $|q-q^{\text{ref}}|\leq \delta$, \eqref{veleq1}-\eqref{veleq2} are equivalent to:
\begin{subequations}
\begin{align}
    \ddot{q}&\leq \ddot{q}^{\text{ref}}+a_1(\dot{q}^{\text{ref}}-\dot{q})+2a_2\delta,\label{veleq3}\\
    \ddot{q}&\geq \ddot{q}^{\text{ref}} + a_1(\dot{q}^{\text{ref}}-\dot{q}) - 2a_2\delta.\label{veleq4}
\end{align}
\end{subequations}
Define candidate CBFs $\hat{h}_{\overline{q}}$ and $\hat{h}_{\underline{q}}$ as follows:
\begin{align*}
\hat{h}_{\overline{q}}(t)&\triangleq \dot{q}^{\text{ref}}(t)-\dot{q}(t)+ \frac{2\delta a_2}{a_1},\\ \hat{h}_{\underline{q}}(t)&\triangleq \dot{q}(t) - \dot{q}^{\text{ref}}(t) + \frac{2\delta a_2}{a_1}.
\end{align*}
% we have:
% \begin{align*}
%     \ddot{q}&\leq \ddot{q}^{\text{ref}}+a_1(\dot{q}^{\text{ref}}-\dot{q})+2\delta a_2,\\
%     \ddot{q}&\geq \ddot{q}^{\text{ref}} + a_1(\dot{q}^{\text{ref}}-\dot{q}) - 2\delta a_2.
% \end{align*}
% Now 
Then \eqref{veleq3}-\eqref{veleq4} are equivalent to 
$$
\dot{\hat{h}}_{\overline{q}}(t)+a_1\hat{h}_{\overline{q}}(t)\geq 0,\quad \dot{\hat{h}}_{\underline{q}}(t)+a_1\hat{h}_{\underline{q}}(t)\geq 0,
$$
for $q\in\{x,y,z\}$ and any $t\in[\tau_0,\tau_v)$. Therefore,  $\hat{h}_{\overline{q}}$ and $\hat{h}_{\underline{q}}$ satisfy the CBF condition. Furthermore, 
since $\|\dot{\vect{r}}(\tau_0) -\dot{\vect{r}}^{\text{ref}}(\tau_0)\|_{\infty} \leq \frac{2\delta a_2}{a_1}$, we have $\hat{h}_{\overline{q}}(\tau_0)\geq 0$ and $\hat{h}_{\underline{q}}(\tau_0)\geq 0$ for $q\in\{x,y,z\}$. Thus, $\hat{h}_{\overline{q}}(t)\geq 0$ and $\hat{h}_{\underline{q}}(t)\geq 0$ for $t\in[\tau_0,\tau_v)$ by \cite{ames2016control}, which implies that $|\dot{q}^{\text{ref}}(t)-\dot{q}(t)|\leq \frac{2\delta a_2}{a_1}$ for $q\in\{x,y,z\}$ and $t\in[\tau_0,\tau_v)$. The conclusion \eqref{cbf_vel_bounds} follows immediately.
%$\bar{L}_f\bar{h}_{\overline{q}} + L_g\bar{h}_{\overline{q}} \geq -a_1\bar{h}_{\overline{q}}$ and $\bar{L}_f\bar{h}_{\underline{q}} + L_g\bar{h}_{\underline{q}}\geq -a_1\bar{h}_{\underline{q}}$. 
%we can rewrite the inequalities above in the form:
% \begin{align*}
%     \bar{L}_f\bar{h}_{\overline{q}} + L_g\bar{h}_{\overline{q}} &\geq -a_1\bar{h}_{\overline{q}}, \\
%     \bar{L}_f\bar{h}_{\underline{q}} + L_g\bar{h}_{\underline{q}}&\geq -a_1\bar{h}_{\underline{q}}.
% \end{align*}
% where
% \begin{align*}
%     \bar{h}_{\overline{q}}&\triangleq \dot{q}^{\text{ref}}(t)-\dot{q}+ \frac{2\delta a_2}{a_1},\\
%     \bar{h}_{\underline{q}}&\triangleq \dot{q} - \dot{q}^{\text{ref}}(t) + \frac{2\delta a_2}{a_1}.
% \end{align*}
%Finally, we note that $\bar{h}_{\overline{q}}$ and $\bar{h}_{\underline{q}}$ are CBFs by Definition \ref{dfn:cbfhigh} and they certify \eqref{cbf_vel_bounds}.
\end{proof}

%Consider the following corollary bounding the feasible region of \eqref{CBF_QP}:
\begin{corollary} \label{cbf_qp_feas_set_bound_cor}
If the conditions of Theorem \ref{CBF_QP_Feasibility} are satisfied, then the feasible solution $\vect{\mu}$ of \eqref{CBF_QP} is bounded by:
\begin{equation}\label{cbf_qp_feas_set_bound}
    \|\vect{\mu} - \ddot{\vect{r}}^{\text{ref}}(t)\|_{\infty}\leq 4\delta a_2,
\end{equation}
for any $t\in[\tau_0,\tau_v)$.
\end{corollary}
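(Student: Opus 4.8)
The plan is to bound each component of a feasible $\vect{\mu}=(\ddot x,\ddot y,\ddot z)^T$ of \eqref{CBF_QP} around the corresponding reference acceleration $\ddot{q}^{\text{ref}}(t)$, using only the two linear inequality constraints of \eqref{CBF_QP} acting on that component, together with the position-tube bound already proved in Theorem \ref{CBF_QP_Feasibility} and the velocity-tube bound of Corollary \ref{bounded_vel_corollary}. Since the three axes are decoupled in \eqref{lin_sys}, the $\infty$-norm estimate \eqref{cbf_qp_feas_set_bound} then follows by maximizing the per-axis bound over $q\in\{x,y,z\}$.

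Concretely, I would first rewrite the pair of CBF constraints on the $q$-component, i.e.\ $\phi_{\overline{q}}^1\vect{\mu}+\phi_{\overline{q}}^2\le 0$ and $\phi_{\underline{q}}^1\vect{\mu}+\phi_{\underline{q}}^2\le 0$ with the $\phi$'s given by \eqref{phiup1}--\eqref{philow2}, as the two-sided inequality \eqref{veleq1}--\eqref{veleq2} on $\ddot q$. Subtracting $\ddot{q}^{\text{ref}}(t)$ throughout and applying the triangle inequality gives
\begin{equation*}
\big|\ddot q-\ddot{q}^{\text{ref}}(t)\big|\;\le\;a_1\big|\dot{q}^{\text{ref}}(t)-\dot q(t)\big|+a_2\big|q^{\text{ref}}(t)-q(t)\big|+a_2\delta .
\end{equation*}
Next I would substitute the two a priori tube bounds that hold for all $t\in[\tau_0,\tau_v)$: $|q(t)-q^{\text{ref}}(t)|\le\delta$ from Theorem \ref{CBF_QP_Feasibility}, and $|\dot q(t)-\dot{q}^{\text{ref}}(t)|\le \tfrac{2\delta a_2}{a_1}$ from Corollary \ref{bounded_vel_corollary}. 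This yields $\big|\ddot q-\ddot{q}^{\text{ref}}(t)\big|\le a_1\cdot\tfrac{2\delta a_2}{a_1}+a_2\delta+a_2\delta=4\delta a_2$ for each $q\in\{x,y,z\}$, and hence $\|\vect{\mu}-\ddot{\vect{r}}^{\text{ref}}(t)\|_\infty\le 4\delta a_2$.

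There is no real obstacle here: the whole derivation is a short chaining of previously established estimates, and the constant $4$ simply decomposes as $2$ coming from the velocity term (via the $\tfrac{2\delta a_2}{a_1}$ bound) plus $1+1$ from the two $a_2\delta$ position terms. The only points that need care are keeping the signs consistent when taking absolute values of the sandwich inequality, and noting that invoking Corollary \ref{bounded_vel_corollary} also imports its hypothesis $\|\dot{\vect{r}}(\tau_0)-\dot{\vect{r}}^{\text{ref}}(\tau_0)\|_\infty\le \tfrac{2\delta a_2}{a_1}$ on the initial velocity error; I would either include this hypothesis in the statement of the corollary or make explicit that it is inherited from the assumed tracking setup.
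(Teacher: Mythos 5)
Your proof is correct and follows essentially the same route as the paper's: both sandwich $\ddot q-\ddot q^{\text{ref}}$ between the two CBF constraints \eqref{veleq1}--\eqref{veleq2} and then substitute the position-tube bound from Theorem \ref{CBF_QP_Feasibility} and the velocity-tube bound from Corollary \ref{bounded_vel_corollary}, giving $2\delta a_2+\delta a_2+\delta a_2=4\delta a_2$ per axis (your version just spells out the triangle-inequality step that the paper leaves implicit). Your closing caveat is also well taken: the paper's statement silently inherits Corollary \ref{bounded_vel_corollary}'s additional hypothesis on the initial velocity error, so making that dependence explicit is a genuine, if minor, improvement.
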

\begin{proof}
Recall that $\|\vect{r}(t)-\vect{r}^{\text{ref}}(t)\|_{\infty}\leq \delta$ by Theorem \ref{CBF_QP_Feasibility} and $\|\dot{\vect{r}}(t) -\dot{\vect{r}}^{\text{ref}}(t)\|_{\infty} \leq 2\delta a_2/a_1$ by Corollary \ref{bounded_vel_corollary}.
These bounds allow us to upper and lower bound the feasible set of \eqref{CBF_QP} by:
\begin{equation*}
    \ddot{q}^{\text{ref}}(t) - 4 \delta a_2\leq \ddot{q}\leq \ddot{q}^{\text{ref}}(t)+4\delta a_2,
\end{equation*}
where $q\in\{x,y,z\}$. Noticing that $\vect{\mu} = (\ddot{x},\ddot{y},\ddot{z})^T$ concludes the proof.
\end{proof}

%By ??, Safety filter \eqref{CBF_QP} guarantees the forward-invariance of set $S$ when $\vect{z}(0)\in S$. 
%Note that the term $\partial^2 h /\partial t^2$ requires finding $\ddot{\vect{r}}^r(t)$. Given the feasibility of the reference trajectory, we can extract this vector from $\dot{\vect{x}}^r(t) = f(\vect{x}^r(t))+g(\vect{x}^r(t))\vect{u}^r(t)$. 
%In order to guarantee the feasibility of \eqref{CBF_QP}, we provide conditions to ensure that candidate CBFs \eqref{CBF} are control-sharing barrier functions \cite{xu2018constrained}.

\subsection{Input Constraints Satisfaction for the CBF-QP Controller}
%\VF{New section}.
%Quadcopters are typically subject to input saturations due to limited actuator effort. If input saturations are not taken into account when designing a controller, the system behavior may become unpredictable. Recent works such as \cite{Flat_trajectory_design} address this issue in the control design process. \XX{Why we talk about ``input saturations are not taken into account''? We already considered it in the previous section. This is not the real motivation for this subsection. The rational is how to  generate a conservative reference trajectory such that when implemented in (CBF-QP) the input saturation will still be respected.}
%In this section, we provide conditions on the trajectory so that controllers filtered by \eqref{CBF_QP} are safe with respect to input saturation. 
%\XX{I changed this paragraph. Please check.} 
The thrust $T$, the roll angle $\phi$, and the pitch angle $\theta$ are related to the decision variable $\vect{\mu}$
% ,which contains the virtual inputs of the double-integrator quadcopter dynamic model, 
in \eqref{CBF_QP} through the relation shown in \eqref{safe_input}. In this subsection, we will consider how the safety constraints on $T,\phi,\theta$ considered in Section \ref{sec:traj} can be respected by the CBF-QP-based tracking controller that filters the nominal controller $\pi(\vect{z})$.
%during tracking with a controller consisting of $\eqref{CBF_QP}$ filtering the nominal controller $\pi(\vect{z})$. 
% the CBF-QP tracking controller, which is filtered by \eqref{CBF_QP} from any nominal  controller. 
Specifically,  we aim to guarantee that $\vect{v}_s(t)\in\mathcal{V}$ where $\vect{v}_s(t)$ is given in \eqref{safe_input} and% \VF{Separated the sets for the legend of Figure \ref{fig:satguard}}: 
\begin{subequations}
\begin{align}
    \mathcal{V}&\triangleq \mathcal{V}_{T}\cap \mathcal{V}_{\xi}, \label{eqV}\\
 \mathcal{V}_{T} &\triangleq  \{\vect{v}\in\mathbb{R}^4\mid 0\leq T\leq \overline{T}\},\\
    \mathcal{V}_{\xi}&\triangleq \{\vect{v}\in\mathbb{R}^4: |\phi|,|\theta|\leq \epsilon<\pi/2\},
\end{align}
\end{subequations}
with $\epsilon$ the upper bound for $|\phi|$ and $|\theta|$, and $\overline{T}$ the upper bound for $T$. Note that the constraint $\vect{v}_s(t)\in\mathcal{V}$ is important to ensure actuator saturation will not happen.
\begin{proposition} \label{input_safeguard_prop}
Consider the system given in \eqref{lin_sys} and suppose that the conditions of Theorem \ref{CBF_QP_Feasibility} are satisfied. If trajectories of the state $\vect{z}^{\text{ref}}(t)$ and the input $\vect{v}^{\text{ref}}(t)$ are dynamically feasible by satisfying \eqref{dyn_model_acc}  and 
the following conditions hold for all $t\in[\tau_0,\tau_v)$:
\begin{subequations} \label{sat_safeguard_cst}
\begin{align}
0\leq T^{\text{ref}}(t) &\leq \overline{T} - 4\sqrt{3}\delta a_2,\label{satprop_T}\\
\|A_{\epsilon}\ddot{\vect{r}}^{\text{ref}}(t)\|_2&\leq \ddot{z}^{\text{ref}}(t) +g -4\delta a_2(1+|\cot\epsilon|\sqrt{2}),\label{satprop_eul}
\end{align}
\end{subequations}
where $A_{\epsilon} =|\cot\epsilon|\mat{\vect{x}_W & \vect{y}_W & \vect{0}_{3\times 1}}$, then $\vect{v}_s(t)\in\mathcal{V}$ with $\vect{v}_s(t)$ given in \eqref{safe_input} and $\mathcal{V}$ given in \eqref{eqV}.
\end{proposition}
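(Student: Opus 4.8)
The plan is to treat the safe tracking input $\vect{v}_s(t)=\Psi^{-1}(\vect{\mu}^*(t),\psi(t))$ from \eqref{safe_input} as a bounded perturbation of the (dynamically feasible) reference, and to control the size of that perturbation using Corollary~\ref{cbf_qp_feas_set_bound_cor}. Since the hypotheses of Theorem~\ref{CBF_QP_Feasibility} hold, \eqref{CBF_QP} is feasible, so its optimizer $\vect{\mu}^*(t)$ exists for every $t\in[\tau_0,\tau_v)$, and Corollary~\ref{cbf_qp_feas_set_bound_cor} gives $\|\vect{\mu}^*(t)-\ddot{\vect{r}}^{\text{ref}}(t)\|_{\infty}\leq 4\delta a_2$. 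Writing $\vect{e}(t)\triangleq \vect{\mu}^*(t)-\ddot{\vect{r}}^{\text{ref}}(t)=(e_1,e_2,e_3)^T$, this yields the three consequences $|e_i|\leq 4\delta a_2$, $\|\vect{e}(t)\|_2\leq 4\sqrt{3}\,\delta a_2$, and $\sqrt{e_1^2+e_2^2}\leq 4\sqrt{2}\,\delta a_2$; everything else is elementary algebra on the inversion map.

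For membership in $\mathcal{V}_T$, I would first invoke the dynamic feasibility hypothesis $\dot{\vect{z}}^{\text{ref}}=A\vect{z}^{\text{ref}}+B\Psi(\vect{v}^{\text{ref}})$ and \eqref{def_Psi} to write $\ddot{\vect{r}}^{\text{ref}}(t)+g\vect{z}_W=T^{\text{ref}}(t)\vect{z}_B^{\text{ref}}(t)$, whence $\|\ddot{\vect{r}}^{\text{ref}}(t)+g\vect{z}_W\|_2=T^{\text{ref}}(t)$ since $\vect{z}_B^{\text{ref}}$ is a unit vector and $T^{\text{ref}}\geq 0$. Because the thrust generated by $\vect{v}_s(t)$ is $T(t)=\|\vect{\mu}^*(t)+g\vect{z}_W\|_2$ by \eqref{inverpsi1}, the triangle inequality gives $T(t)\leq T^{\text{ref}}(t)+\|\vect{e}(t)\|_2\leq T^{\text{ref}}(t)+4\sqrt{3}\,\delta a_2\leq \overline{T}$ by \eqref{satprop_T}; the lower bound $T(t)\geq 0$ is automatic as $T$ is a norm. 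Hence $\vect{v}_s(t)\in\mathcal{V}_T$.

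For membership in $\mathcal{V}_{\xi}$, by Lemma~\ref{epsilon_cone_lemma} it suffices to verify $\vect{\mu}^*(t)\in\mathcal{K}_{\epsilon}$, i.e.\ $\|A_{\epsilon}\vect{\mu}^*(t)\|_2\leq \mu_3^*(t)+g$, because the $\phi,\theta$ entries of $\vect{v}_s(t)=\Psi^{-1}(\vect{\mu}^*(t),\psi(t))$ are exactly those the flat map assigns to the acceleration $\vect{\mu}^*(t)$. Since the last column of $A_{\epsilon}$ is zero, $\|A_{\epsilon}\vect{e}(t)\|_2=|\cot\epsilon|\sqrt{e_1^2+e_2^2}\leq 4\sqrt{2}\,\delta a_2|\cot\epsilon|$, so the triangle inequality and \eqref{satprop_eul} give $\|A_{\epsilon}\vect{\mu}^*(t)\|_2\leq \|A_{\epsilon}\ddot{\vect{r}}^{\text{ref}}(t)\|_2+4\sqrt{2}\,\delta a_2|\cot\epsilon|\leq \ddot{z}^{\text{ref}}(t)+g-4\delta a_2$; and as $\mu_3^*(t)=\ddot{z}^{\text{ref}}(t)+e_3\geq \ddot{z}^{\text{ref}}(t)-4\delta a_2$, the right side is $\leq \mu_3^*(t)+g$. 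This establishes $\vect{\mu}^*(t)\in\mathcal{K}_{\epsilon}$, hence $|\phi(t)|,|\theta(t)|\leq\epsilon$ by Lemma~\ref{epsilon_cone_lemma}, i.e.\ $\vect{v}_s(t)\in\mathcal{V}_{\xi}$. Combining the two parts yields $\vect{v}_s(t)\in\mathcal{V}_T\cap\mathcal{V}_{\xi}=\mathcal{V}$ for all $t\in[\tau_0,\tau_v)$.

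The only points that need care, rather than posing a real obstacle, are: (i) that the identity $T^{\text{ref}}=\|\ddot{\vect{r}}^{\text{ref}}+g\vect{z}_W\|_2$ genuinely requires the dynamic feasibility hypothesis, since otherwise $\ddot{\vect{r}}^{\text{ref}}$ and $\vect{v}^{\text{ref}}$ need not be consistent through \eqref{dyn_model_acc}; and (ii) that the margins $4\sqrt{3}\,\delta a_2$ in \eqref{satprop_T} and $4\delta a_2(1+|\cot\epsilon|\sqrt{2})$ in \eqref{satprop_eul} are precisely the worst-case contributions of $\|\vect{e}\|_2$ and of $|e_3|$ together with $\|A_{\epsilon}\vect{e}\|_2$, so the constants telescope exactly. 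The argument is therefore a perturbation estimate around the reference trajectory, built on Corollary~\ref{cbf_qp_feas_set_bound_cor} and the conic characterization of the angle bounds in Lemma~\ref{epsilon_cone_lemma}.
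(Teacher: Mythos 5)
Your proposal is correct and follows essentially the same route as the paper: bound $\vect{\mu}^*-\ddot{\vect{r}}^{\text{ref}}$ via Corollary \ref{cbf_qp_feas_set_bound_cor} and norm equivalence, use the triangle inequality with \eqref{satprop_T} to get $T(t)\le\overline{T}$, and show $\vect{\mu}^*(t)\in\mathcal{K}_\epsilon$ via \eqref{satprop_eul} and $\mu_3^*\ge\ddot{z}^{\text{ref}}-4\delta a_2$ before invoking Lemma \ref{epsilon_cone_lemma}. The only cosmetic difference is that you use the forward triangle inequality where the paper phrases the same estimate as a reverse triangle inequality, and you make the (trivially true, and worth stating) lower bound $T(t)\ge 0$ and the role of dynamic feasibility explicit.
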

\begin{proof} 
Let $\vect{\mu}^*(t)$ be the solution of \eqref{CBF_QP}. Note that by norm equivalence, \eqref{cbf_qp_feas_set_bound} implies:
\begin{equation}
    \|\vect{\mu}^*(t) - \ddot{\vect{r}}^{\text{ref}}(t)\|_2\leq \sqrt{3}\|\vect{\mu}^*(t)-\ddot{\vect{r}}^{\text{ref}}(t)\|_\infty\leq4\sqrt{3}\delta a_2,\label{eqnprp3eq1}
\end{equation}
for any $t\in[\tau_0,\tau_v)$. 

First, consider the thrust element of $\vect{v}_s(t)$ given as $T(t) = \|\vect{\mu}^*(t)+g\vect{z}_W\|_2$ by the map $\Psi^{-1}$. By $\eqref{satprop_T}$ we have
\begin{equation}
    T^{\text{ref}}(t)+4\sqrt{3}\delta a_2 = \|\ddot{\vect{r}}^{\text{ref}}(t)+g\vect{z}_W\|_2+4\sqrt{3}\delta a_2 \leq \overline{T}.\label{eqnprp3eq2}
\end{equation}
%Let $\tilde{T}(t) \triangleq T(t) - T_{sat}$ and notice that by the reverse triangle inequality and the above relations:
% \begin{align*}
%     \tilde{T}(t) &=\|\vect{\mu}^*(t)+g\vect{z}_W\|_2-T_{sat}\\
%     &\leq \|\vect{\mu}^*(t)+g\vect{z}_W\|_2-\|\ddot{\vect{r}}^{\text{ref}}(t)+g\vect{z}_W\|_2-4\sqrt{3}\delta a_2\\
%     % &\leq \|\vect{\mu}^*(t)\|_2 + \|g\vect{z}_W\|_2-\|\ddot{\vect{r}}^{\text{ref}}(t)\|_2 - \|g\vect{z}_W\|_2- \gamma\\
%     % &\leq \|\vect{\mu}^*(t)\|_2 -\|\ddot{\vect{r}}^{\text{ref}}(t)\|_2 - \gamma\\
%     &\leq \|\vect{\mu}^*(t)-\ddot{\vect{r}}^{\text{ref}}(t)\|_2-4\sqrt{3}\delta a_2 \\
%     % &\leq \|\vect{\mu}-\ddot{\vect{r}}^{\text{ref}}(t)\|_\infty - 4\delta a_2\\
%     &\leq 0.
% \end{align*}
Therefore, 
\begin{align*}
T(t) - \overline{T} &=\|\vect{\mu}^*(t)+g\vect{z}_W\|_2-\overline{T}\\
    &\leq \|\vect{\mu}^*(t)+g\vect{z}_W\|_2-\|\ddot{\vect{r}}^{\text{ref}}(t)+g\vect{z}_W\|_2-4\sqrt{3}\delta a_2\\
    % &\leq \|\vect{\mu}^*(t)\|_2 + \|g\vect{z}_W\|_2-\|\ddot{\vect{r}}^{\text{ref}}(t)\|_2 - \|g\vect{z}_W\|_2- \gamma\\
    % &\leq \|\vect{\mu}^*(t)\|_2 -\|\ddot{\vect{r}}^{\text{ref}}(t)\|_2 - \gamma\\
    &\leq \|\vect{\mu}^*(t)-\ddot{\vect{r}}^{\text{ref}}(t)\|_2-4\sqrt{3}\delta a_2 \\
    % &\leq \|\vect{\mu}-\ddot{\vect{r}}^{\text{ref}}(t)\|_\infty - 4\delta a_2\\
    &\leq 0,
\end{align*}
where the first inequality is from \eqref{eqnprp3eq2}, the second inequality is from the reverse triangle inequality, and the third inequality is from \eqref{eqnprp3eq1}. Thus, $T(t)\leq \overline{T}$ for $t\in[\tau_0,\tau_v)$. Note that $\ddot{\vect{r}}^{\text{ref}}(t)$ can be extracted from $\dot{\vect{z}}^{\text{ref}}(t)$ which is given by dynamic feasibility of the reference trajectory. 

%\XX{I stop here. The notations have not been changed for the rest of the proof. Please check!}\VF{I changed the notation below. Maybe there is a cleaner way to show that $\sqrt{2}$ is sufficient (instead of $\sqrt{3}$) in this case without defining $\ddot{\vect{r}}_{2D}^{\text{ref}}(t).$}
Next, we examine the roll $\phi(t)$ and pitch $\theta(t)$ elements of $\vect{v}_s(t)$. We drop the time-dependence of terms for convenience. Let $\mathcal{K}_{\epsilon}$ be given as in \eqref{ang_bounds_cone}. 
% Define a set
% \begin{equation*}
%     \mathcal{K}_{\epsilon_v}\triangleq \{\vect{\mu}\in\mathbb{R}^3:\|A_{\epsilon_v}\vect{\mu}\|_2\leq \mu_3+g\}.
% \end{equation*}
Note that
\begin{align}
\lv\mat{\mu^*_1\\\mu^*_2}\rv_2&\leq \lv\mat{\mu^*_1\\\mu^*_2}-\mat{\ddot x^{\text{ref}}\\\ddot y^{\text{ref}}}\rv_{2}+\lv\mat{\ddot x^{\text{ref}}\\\ddot y^{\text{ref}}}\rv_2\nonumber\\
&\leq 4\sqrt{2}\delta a_2+\lv\mat{\ddot x^{\text{ref}}\\\ddot y^{\text{ref}}}\rv_2\label{ineqAep}
\end{align}
where the first inequality is from the 
triangle inequality and the second inequality is from norm equivalence and Corollary \ref{cbf_qp_feas_set_bound_cor}.
% and projected signals $\vect{\mu}^*_{2D}(t)\triangleq(\mu^*_1(t),\mu^*_2(t))$ and $\ddot{\vect{r}}^{\text{ref}}_{2D}\triangleq (\ddot{x}^{\text{ref}}(t),\ddot{y}^{\text{ref}}(t))$. Then, by Corollary \ref{cbf_qp_feas_set_bound_cor}, $\|\vect{\mu}^*_{2D}(t)-\ddot{\vect{r}}^{\text{ref}}_{2D}(t)\|_{\infty}\leq 4\delta a_2$, and by norm equivalence and the triangle inequality:
% \begin{equation*}
%     4\sqrt{2}\delta a_2 \geq \|\vect{\mu}^*_{2D}(t)-\ddot{\vect{r}}^{\text{ref}}_{2D}(t)\|_{2} \geq \|\vect{\mu}^*_{2D}(t)\|_2 - \|\ddot{\vect{r}}^{\text{ref}}_{2D}(t)\|_{2}.
% \end{equation*}
Therefore,
\begin{align*}
\|A_{\epsilon}\vect{\mu}^*\|_2 =|\cot\epsilon|\lv\mat{\mu^*_1\\\mu^*_2}\rv_2
&\leq 
4|\cot\epsilon|\sqrt{2}\delta a_2+ \|A_{\epsilon}\ddot{\vect{r}}^{\text{ref}}\|_2  \\
&\leq\ddot{z}^{\text{ref}} -4\delta a_2 +g \\
&\leq \mu_3^*+g
\end{align*}
where the first inequality is from \eqref{ineqAep} and the definition of $A_{\epsilon}$, the second inequality is from \eqref{satprop_eul}, and the third inequality is from the fact that $\ddot{z}^{\text{ref}} \leq \mu_3^*+4\delta a_2$, which can be derived from  $|\mu_3^* - \ddot{z}^{\text{ref}}|\leq 4\delta a_2$ by Corollary \ref{cbf_qp_feas_set_bound_cor}. 
% We can now multiply on both sides by $|\cot\epsilon_v|\geq 0$ and drop the time-dependence of $\vect{\mu}^*(t)$ and $\vect{\mu}^*_{2D}(t)$ for convenience to obtain:
% \begin{align*}
%     4|\cot\epsilon_v|\sqrt{2}\delta a_2 &\geq \||\cot\epsilon_v|\vect{\mu}^*_{2D}\|_2 - \||\cot\epsilon_v|\ddot{\vect{r}}^{\text{ref}}_{2D}(t)\|_2\\
%     &\geq\|A_{\epsilon_v}\vect{\mu}^*\|_2 - \|A_{\epsilon_v}\ddot{\vect{r}}^{\text{ref}}(t)\|_2,
%     % &\geq \|A_{\epsilon_v}\vect{\mu}^*\|_2 -\ddot{z}^{\text{ref}}(t)-g+4\delta a_2(1+|\cot\epsilon_v|\sqrt{2}),
% \end{align*}
% % where the last inequality is obtained from \eqref{satprop_eul}.
% where the last inequality follows from the definition of $A_{\epsilon_v}$. 
% We can now use \eqref{satprop_eul} to write:
% \begin{equation*}
%     \|A_{\epsilon_v}\vect{\mu}^*\|_2\leq \ddot{z}^{\text{ref}}(t) +g -4\delta a_2.
% \end{equation*}
% By Corollary \ref{cbf_qp_feas_set_bound_cor},  $|\mu_3^* - \ddot{z}^{\text{ref}}|\leq 4\delta a_2$, which  implies that  $\ddot{z}^{\text{ref}} \leq \mu_3^*+4\delta a_2$. Therefore, 
% \begin{equation*}
%     \|A_{\epsilon_v}\vect{\mu}^*\|_2\leq \mu_3^*+g \implies \vect{\mu}^*(t)\in\mathcal{K}_{\epsilon_v},
% \end{equation*}
% for any $t\in[\tau_0,\tau_v)$. 
Thus, $\mu^*(t)\in\mathcal{K}_{\epsilon}$ and, by Lemma \ref{epsilon_cone_lemma}, 
%\XX{why conditions of Lemma \ref{epsilon_cone_lemma} are satisfied?}\VF{Changed the Lemma to account for this} 
we have that $|\phi(t)|,|\theta(t)|\leq \epsilon$ for all $t\in[\tau_0,\tau_v)$. This completes the proof.
\end{proof}

\begin{remark}
The conditions \eqref{satprop_T}-\eqref{satprop_eul}  for the reference trajectory can be easily guaranteed in continuous-time with the method presented in Section \ref{sec:traj}, and the resulting \eqref{FLAT-SOCP} remains convex. Intuitively, conditions \eqref{satprop_T}-\eqref{satprop_eul} shrink the cone \eqref{ang_bounds_cone} and the sphere \eqref{T_sphere} constraints on the trajectory acceleration (see Figure \ref{fig:satguard}) enough so that all feasible solutions of \eqref{CBF_QP} satisfy the desired safety constraints when transformed through the map $\Psi^{-1}$. This means that the transformed optimal solution $\vect{v}_s(t) = \Psi^{-1}(\vect{\mu}^*(t),\psi) \in \mathcal{V}$ for any $\psi$ and any $t\in[\tau_0,\tau_v)$.
\begin{figure}[htp]
    \centering
    \includegraphics[width=8cm]{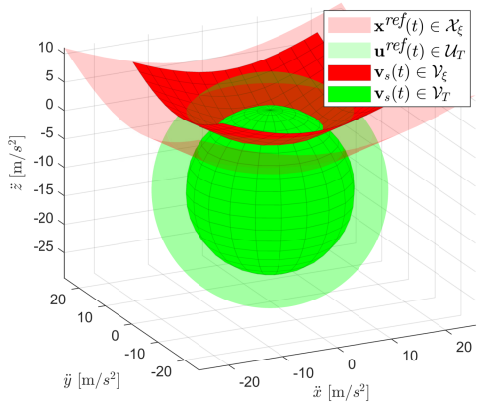}
    \caption{
    %Effect of conditions \eqref{satprop_T}-\eqref{satprop_eul}  on the feasible region of \eqref{FLAT-SOCP}. 
    Conditions \eqref{satprop_T}-\eqref{satprop_eul}  shrink the cone that bounds the Euler angles ($\vect{x}^{\text{ref}}(t)\in\mathcal{X}_{\xi}$) and the sphere that bounds the thrust ($\vect{u}^{\text{ref}}(t)\in\mathcal{U}_{T}$) to guarantee that safe inputs $\vect{v}_{s}(t)\in\mathcal{V}$. This figure shows the shrinkage with typical bounds $\epsilon = 1.0472$, $\overline{T}=2g$ and the same safety parameters  $\delta=0.1$, $a_2=8$ that we use in Examples \ref{example:vcp}, \ref{example:slowzone} and \ref{example:forest} in Sec. \ref{sec:exper}}
    \label{fig:satguard}
\end{figure}
\end{remark}

\begin{remark}\label{remark12}
Instead of  shrinking the cone  and the sphere  constraints for \eqref{FLAT-SOCP}, 
%One might argue that the conditions of Proposition \ref{input_safeguard_prop} are too conservative for certain  $\delta$, $a_2$ parameter values. An
an alternative method to ensuring $\vect{v}_s(t)\in\mathcal{V}$ is to convert the CBF-based QP into a SOCP by adding two additional SOC constraints as follows:
\begin{align*} %\label{CBF-SOCP}
% \tag{CBF-SOCP}
    \min_{\vect{\mu}} \quad & \|\Psi(\pi(\vect{z})) - \vect{\mu}\|_2^2\\
    \text{s.t.} \quad &  \phi_{\overline{q}}^1\vect{\mu}+\phi_{\overline{q}}^2\leq 0,\nonumber\\ 
     &\phi_{\underline{q}}^1\vect{\mu}+\phi_{\underline{q}}^2\leq 0,\quad q\in\{x,y,z\},\nonumber\\
     &\|\vect{\mu}+g\vect{z}_W\|_2\leq \overline{T},\\
     &\|A_{\epsilon}\vect{\mu}\|_2\leq \mu_3+g,
\end{align*}
with $A_{\epsilon}$ given in Proposition \ref{input_safeguard_prop}. Note that the optimization shown above is still a convex program, but its feasibility is no longer guaranteed.
%which can be solved efficiently using convex solvers such as MOSEK, but its feasibility is no longer guaranteed.
%, while this formulation lifts \eqref{CBF_QP} to SOCP and we can no longer guarantee its feasibility, convexity is maintained.

% Proposition \ref{input_safeguard_prop} maintains the feasibility guarantees of \eqref{CBF_QP} provided by Theorem \ref{CBF_QP_Feasibility} by placing the tracking safety constraints in \eqref{FLAT-SOCP} instead.

% While the conditions of Proposition \ref{input_safeguard_prop} guarantee input safeguarding for all solutions of \eqref{CBF_QP}, and maintain the feasibility of the problem, they require designing conservative trajectories. An alternative way to safeguard the solutions of \eqref{CBF_QP} with respect to saturation is to introduce constraints in virtual space that, while lifting the problem to SOCP and potentially compromising its feasibility, still provide input saturation safeguarding in the reduced input space and retain convexity. Namely, we can add SOC constraints:
% \begin{align*}
%     \|\vect{\mu}+g\vect{z}_W\|_2\leq T_{sat},\\
%     \|A^{sat}_{\epsilon}\vect{\mu}\|_2\leq \mu_3+g,
% \end{align*}
% with $A_{\epsilon}^{sat}$ given as in Proposition \ref{input_safeguard_prop}.
\end{remark}

%%%%%%%%%%%%%%%%%%%%%%%%%%%%%%%%%%%
\section{Experiments}\label{sec:exper}
%\VF{There are no simulations in this section.}
We demonstrate the effectiveness and versatility of the proposed framework using three experimental examples. Each example highlights certain different aspects of the framework. For all examples, \eqref{FLAT-SOCP} is solved in MATLAB with Yalmip \cite{Lofberg2004} and solver MOSEK \cite{mosek} at a rate of 30-100 Hz depending on its complexity, and \eqref{CBF_QP} is solved in Python3 with OSQP \cite{osqp} at a rate of 100 Hz. The tracking data is obtained from an OptiTrack motion capture system and a Crazyflie2.1 nano quadcopter. We provide videos of all presented examples in \cite{victor21video}.

% \XX{In the caption of each figure, specify which example it is for.}

\begin{example}\label{example:vcp}

The first example demonstrates rigorous satisfaction of the safety constraints in continuous-time by the reference trajectory obtained from \eqref{FLAT-SOCP} and the bounded tracking performance provided by the CBF-QP controller. % \subsection*{Example 1: Bounded Thrust, Euler Angles and Tracking}
First, we solve \eqref{FLAT-SOCP} to generate a reference trajectory satisfying \eqref{state_input_const} where the following safety constraints are required to be respected for all $t\in[\tau_0,\tau_v)$:
\begin{align*}
    \|\dot{\vect{r}}^{\text{ref}}(t)\|&\leq \overline{v},\\
    |\phi^{\text{ref}}(t)|,|\theta^{\text{ref}}(t)|&\leq \epsilon,\\
    \underline{T}\leq T^{\text{ref}}(t)&\leq \overline{T},\\
    |p^{\text{ref}}(t)|,|q^{\text{ref}}(t)|&\leq \overline{\omega}.
\end{align*}
The parameters of the trajectory and the safety cosntraints are given in Table \ref{tab:sample_trajectory_1}, and the parameters of the waypoints are given in Table \ref{tab:wp_1}. The initial/final conditions are chosen as $\vect{p}^m_0=\cdots =\vect{p}^m_4= (0,0,0)^T, m\in\{0,f\}$.   
%For illustration, we highlight the geometry of the constraints that act on the $2$nd order VCPs to guarantee bounded thrust, roll and pitch. 
%, though other constraints are also enforced. 
%Then, we demonstrate that \eqref{CBF_QP} endows a poorly-designed nominal controller with bounded tracking. 
%\XX{Show or refer what constraints are considered in this example.} \VF{I added reference to \eqref{state_input_const} which are all state/input constraints.} \XX{If we add more constraints, then we have difficulty showing the constraints are satisfied rigorously? Is that the reason only three constraints are considered?} 
%\VF{This trajectory has more than 3 constraints, all constraints for state and input (no obs. avoidance) are active. The three constraints I show geometrically are the most illustrative and are shown for that purpose only.} 
With these parameters, we solve \eqref{FLAT-SOCP}  and show the resulting position trajectory $\vect{r}^{\text{ref}}(t)$ in Figure \ref{fig:vcp_st_r0} along with the waypoints and control points. The geometry of the constraints enforced for the $2$nd order VCPs $\vect{P}_j^{(2)}$ is shown in Figure \ref{fig:vcp_st_r2}:
\begin{itemize}
    \item Inclusion in the conic surface (red) given by $\mathcal{K}_{\epsilon}$ defined in \eqref{ang_bounds_cone} to guarantee $\vect{x}^{\text{ref}}(t)\in \mathcal{X}_{\xi}$.
    \item Inclusion in the sphere (green) given by \eqref{vcp_Tmax} to guarantee upper-bounded thrust $T(t)\leq \overline{T}$.
    \item Restriction above hyperplane (cyan) given by \eqref{vcp_Tmin} to guarantee lower-bounded thrust $\underline{T}\leq T(t)$.
\end{itemize}
The safety constraint satisfaction is verified in Figure \ref{fig:vcp_st_cst} by transforming the flat output trajectory $\vect{\sigma}^{\text{ref}}(t)=(\vect{r}^{\text{ref}}(t),0) $ via \eqref{flat_map} to the state $\vect{x}^{\text{ref}}(t)$ and input $\vect{u}^{\text{ref}}(t)$ trajectories. From  Figure \ref{fig:vcp_st_cst}, it can be observed that all considered safety constraints are satisfied in continuous time. It can be also seen that the values of the vector $\vect{\zeta}=(\zeta_1,...,\zeta_{36})^T$, which is illustrated by the light red line, lower-bound the thrust over local segments as defined in \eqref{cp_omega}. %\XX{Explain the light red line.}

\begin{table} [bt]
\renewcommand{\arraystretch}{1.3}
\caption{Trajectory parameters for Example \ref{example:vcp}}
\label{tab:sample_trajectory_1}
\centering
\begin{tabular}{|c|c||c|c||c|c|}
\hline
$\tau_0$ [s] & 0 &  $n_0, n_f$ [-] & 4 & $\epsilon$ [deg] & 1.75 \\
\hline
$\tau_v$ [s] & 30 & $n_{wp}$ & 8 &  $\underline{T}$ $[m/s^2]$ & 9.7 \\
\hline
N [-] & 40 & $d_{wp}$ [m]& 0.05 & $\overline{T}$  $[m/s^2]$ & 9.9 \\
\hline
d [-] & 5  & $\overline{v}$ $[m/s]$ & 0.5 & $\overline{\omega}$ [deg/s] & 1.5\\
\hline
\end{tabular}
\end{table}

\begin{table} [bt]
\renewcommand{\arraystretch}{1.3}
\caption{Trajectory waypoints for Example \ref{example:vcp}}
\label{tab:wp_1}
\centering
\begin{tabular}{|c|c c c c c c c c|}
\hline
$i$ & 1 & 2 & 3 & 4 & 5 & 6 & 7 & 8\\
\hline
& -0.15 & -0.75 & 0.65 & 0.65 & -0.5 & -0.6 & 0.4 & 0.25\\
$\vect{p}_i^{wp}$ & 0.25 & 0.6 & -0.65 & 0.5 & 0.5 & -0.6 & -0.4 & 0.25\\
& 0.25 & 0.5 & 0.25 & 0.25 & 0.75 & 0.5 & 0.4 & 0.25\\
\hline
$t_i$ & 4.5 & 7.8 & 12.6 & 15.3 & 18 & 21  & 24 & 27\\
\hline
\end{tabular}
\end{table}

\begin{figure}[htp]
    \centering
    \includegraphics[width=8cm]{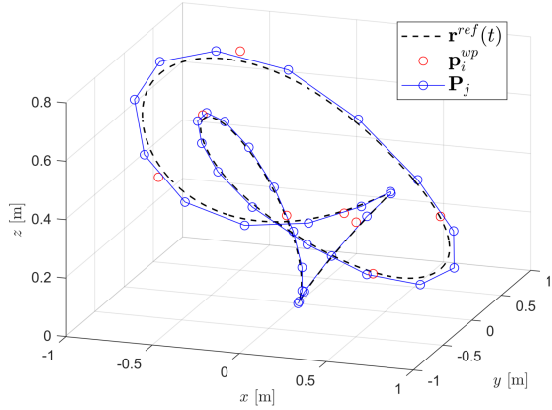}
    \caption{The position trajectory generated in Example \ref{example:vcp} with the parameters in Tables \ref{tab:sample_trajectory_1} and \ref{tab:wp_1}. We show the position trajectory (dashed black) along with the waypoints (red circles) and the control points (connected, blue circles).}
    \label{fig:vcp_st_r0}
\end{figure}

\begin{figure}[htp]
    \centering
    \includegraphics[width=8cm]{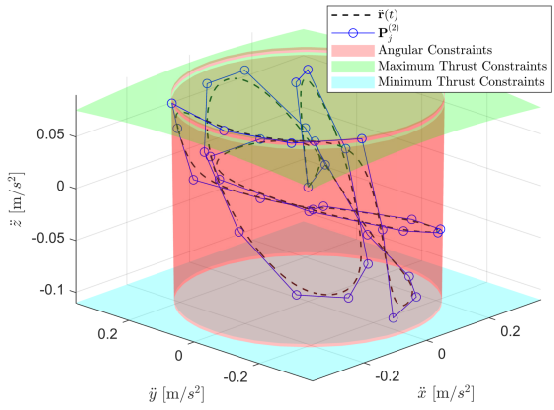}
    \caption{
    %\XX{Do we have other constraints in  FLAT-SOCP? }\VF{Added sentence at end of caption.} 
    Visualization of constraints enforced for the $2$nd order VCPs $\vect{P}_j^{(2)}$ in Example \ref{example:vcp}. Constraint \eqref{cp_ang} represents inclusion within the red cone. Constraint \eqref{cp_Tmax} is the inclusion within the green sphere (only cusp is shown). Constraint \eqref{cp_Tmin} forces the VCPs to lie above the cyan plane (global bound). Note that the local counterpart \eqref{cp_omega_Tmin} is not depicted here. Note also that these are not the only constraints added to \eqref{FLAT-SOCP} in Example \ref{example:vcp}.}
    \label{fig:vcp_st_r2}
\end{figure}

\begin{figure}[htp]
    \centering
    \subfigure[State constraints. The velocity respects $\|\dot{\vect{r}}(t)\|\leq 0.5$ m/s and the Euler angles respect $|\phi(t)|,|\theta(t)|\leq 1.75$ deg for all $t\in[0,30)$.]{
    \includegraphics[width=8cm]{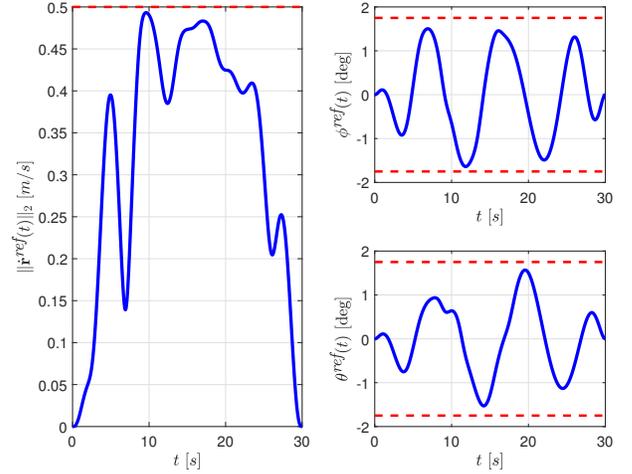}
    }
    \subfigure[Input constraints. The thrust respects $9.7\;  \text{m/$s^2$}\leq T(t) \leq 9.9\;  \text{m/$s^2$}$ and the angular velocities respect $|p(t)|,|q(t)|\leq 1.5$ deg/s. The light red line indicates the values of the vector $\vect{\zeta}=(\zeta_1,\ldots,\zeta_{36})^T$ lower-bounding the thrust over local segments as shown in \eqref{cp_omega}.]{
    \includegraphics[width=8cm]{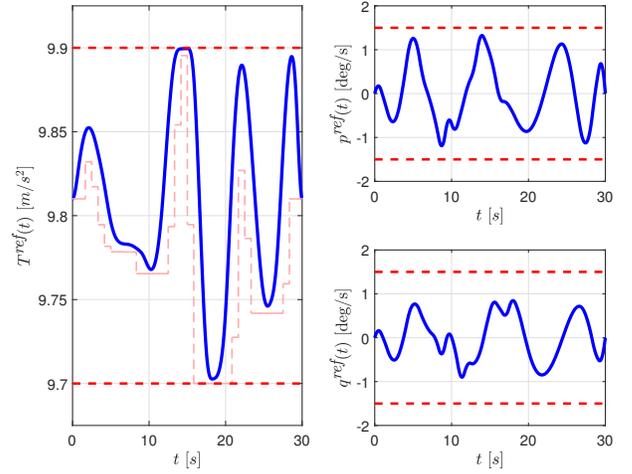}
    }
    \caption{Verification that the safety constraints for state and inputs in Example \ref{example:vcp} are satisfied in continuous-time.}
    \label{fig:vcp_st_cst}
\end{figure}

\begin{figure}[htb]
    \centering
    \subfigure[Tracking performance of the nominal controller $\pi(\vect{z})$ with (blue) and without (magenta) the \eqref{CBF_QP}. The data is obtained from a real flight experiment using a Crazyflie2.1 nano quadcopter.]{
    \includegraphics[width=8cm]{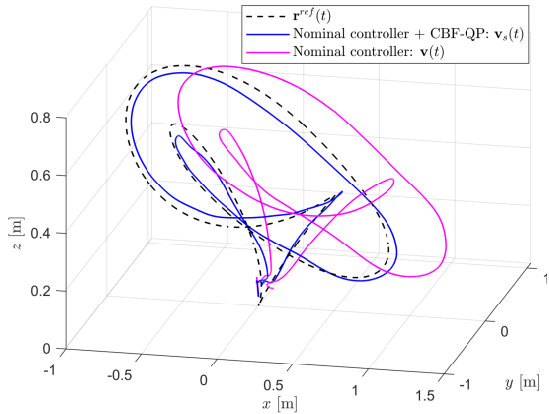}
    }
    \subfigure[Measured values of  $h_{\overline{x}},h_{\underline{x}},h_{\overline{y}},h_{\underline{y}},h_{\overline{z}},h_{\underline{z}}$ defined in \eqref{CBF} when tracking with the filtered (blue) and unfiltered (magenta) nominal controller $\pi(\vect{z})$. Note that safety is satisfied when the values of CBFs are nonnegative, which means that the real trajectory is within the prescribed tube ($\delta =0.1$) around the nominal  trajectory. Tracking with unfiltered nominal controller (magenta) violates safety as $h_{\overline{x}}(t,\vect{z}(t)) <0$ and $h_{\overline{y}}(t,\vect{z}(t)) <0$ for some $t$. On the other hand, the CBF-QP tracking controller always respects safety as $h_{\overline{x}},h_{\underline{x}},h_{\overline{y}},h_{\underline{y}},h_{\overline{z}},h_{\underline{z}}$ are always non-negative, indicating safety. ]{\includegraphics[width=8cm]{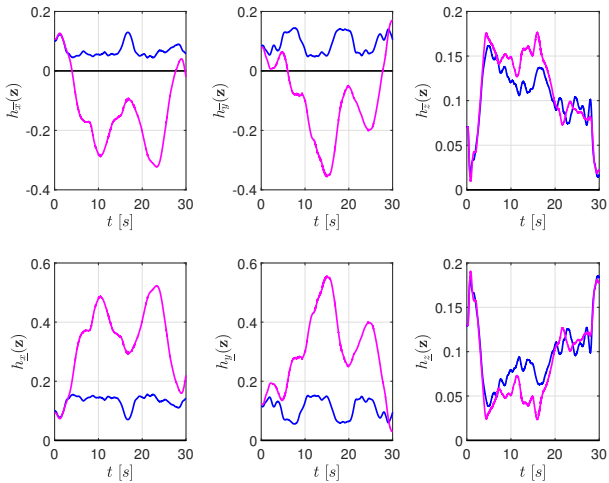}}
    \caption{Verification of the bounded-tracking property of \eqref{CBF_QP} with experimental data for Example \ref{example:vcp}. 
    %\XX{Use subfigure environment to add caption for the top and bottom plot individually? The top plot is about trajectoy, the bottom is about CBFs, which are totally different.}\VF{I changed it.} 
    }
    \label{fig:cbf_qp}
\end{figure}

Next, we implement in a Crazyflie2.1 nano quadcopter the CBF-QP controller to track the reference trajectory generated by \eqref{FLAT-SOCP} with a poorly-designed nominal controller $\pi(\vect{z})$. We show that, when filtered through the \eqref{CBF_QP} with parameters $a_1 = 6, a_2 =8$, safe trajectory tracking can be achieved with $\delta=0.1$. We compare the performance of the CBF-QP tracking controller and the unfiltered nominal controller in Figure \ref{fig:cbf_qp}. It can be seen that the CBF-QP controller  achieves much better tracking performance than the unfiltered nominal controller. Note that the bounding constraints are satisfied by the  CBF-QP controller, which is demonstrated by the fact that CBFs defined in \eqref{CBF} satisfy $h_{\overline{q}},h_{\underline{q}}\geq 0$ for $q\in\{x,y,z\}$, while these conditions are not true for the unfiltered nominal controller.   
%\XX{Can we make the legend and the axis labels of the figures better? When I zoom in, the quality is not good.}\VF{I exported the figures from MATLAB directly as PNG. If I zoom in the PDF version of the paper, the quality seems reasonable to me. Maybe we can discuss this in our individual meeting.}

\end{example}

\begin{example}\label{example:slowzone}
The second example  demonstrates the proposed framework is suitable for real-time (online) computation.
% low computational burden
% real-time capability
% of the proposed framework.
We let the quadcopter take off from, and land on a moving platform, and fly through a velocity-constrained zone while passing two nearby waypoints. Note that velocity-constrained areas are commonplace  in  safety-critical applications (e.g., when the quadcopter operates near humans). The reference trajectory is recomputed by solving \eqref{FLAT-SOCP} in real time, where the initial and final positions of the reference trajectory are both set to be the position of the moving platform. The tracking controller is obtained by solving \eqref{CBF_QP} online.

The safety specifications for the reference trajectory are: 
\begin{subequations} \label{slowzone_specs}
\begin{align}
    \vect{r}^{\text{ref}}(t) \in \mathcal{S}_v, \quad
    \|\dot{\vect{r}}^{\text{ref}}(t)\| \leq \overline{v} = 0.5, \quad t\in[3,6),\label{slowzone_specs_local}\\
    \vect{r}^{\text{ref}}(0) =  \vect{r}^{\text{ref}}(9) = \vect{p}_{0f}(t), \quad \dot{\vect{r}}^{\text{ref}}(0) = \dot{\vect{r}}^{\text{ref}}(9) = 0,\label{slowzone_specs_ext} \\
    \|\vect{r}^{\text{ref}}(2.5) - \vect{p}_1^{wp}\|_2\leq d_{wp},\|\vect{r}^{\text{ref}}(6.5) - \vect{p}_2^{wp}\|_2\leq d_{wp}\label{slowzone_specs_wp},
\end{align}
\end{subequations}
where $\vect{p}_{0f}(t)$ is the position of the platform at time $t$, $\vect{p}_1^{wp} = (0.75, 0.6, 1.1)^T$ and $\vect{p}_2^{wp} = (-0.75,0.6,1.1)^T$ are the waypoints with desired radius $d_{wp} = 0.2$, and the constrained-velocity zone $\mathcal{S}_v$ is defined as: 
\begin{align*}
    \mathcal{S}_{v}&\triangleq \{\vect{r}\in\mathbb{R}^3: \|A_s\vect{r}+\vect{b}_s\|_2\leq 1\}, \label{ex2_local_specification} \\
    A_s&=\text{diag}(1.33, 13.3, 13.3),\\
    \vect{b}_s &= \mat{0& -10& -14.7}^T,
\end{align*}
which describes an ellipsoid centered at $\vect{p}_h = (0,0.75,1.1)^T$ chosen to ensure the quadcopter clears the \emph{hoop} obstacle.
% \begin{equation*}
%     S_{v}\triangleq \{\vect{r}\in\mathbb{R}^3\mid (-0.5,0.2,0.2)^T\leq\vect{r} \leq (0.5,0.8,0.8)^T\}
% \end{equation*}

%
% and it must pass near ($d_{wp} = 0.2$) waypoints $\vect{p}_1^{wp} = (0.75, 0.6, 1.1)$ and $\vect{p}_2^{wp} = (-0.75,0.6,1.1)$ at times $t_1 = 2.5$ and $t_2 = 6.5$. Notice that \eqref{slowzone_cst} indicates position and velocity bounds over a local time interval, which is commonplace  in  safety-critical applications (e.g., when the quadcopter operates near humans),  and will result in constraints over a subset of the control points and VCPs.
% We assume that there are 3 waypoints $\vect{p}^{wp}_1 = (-0.3, 0.6, 0.35)$, $\vect{p}^{wp}_2 = (0, 0.4, 0.65)$ and $\vect{p}^{wp}_3 = (0.3, 0.6, 0.35)$ with associated times $t_1 = 3.5$, $t_2 = 5$, $t_3 = 6.5$. 

We begin by considering a B-spline curve $\vect{r}^{\text{ref}}(t)$ as defined in \eqref{b-spline_curve} with $N= 45$, $d=5$, $\tau_0 = 0$ and $\tau_v = 9$. Including \eqref{slowzone_specs_ext} and \eqref{slowzone_specs_wp} in \eqref{FLAT-SOCP} is straightforward. However, \eqref{slowzone_specs_local} indicates position and velocity bounds over a local time interval and will result in constraints over a subset of the control points and VCPs.
% The trajectory $\vect{r}^{\text{ref}}(t)$ must satisfy \eqref{const_wp} with $d_{wp} = 0.2$. 
Since $\vect{\tau}$ is clamped and uniform, we have that $[3,6)\in[\tau_{18},\tau_{33}) = [2.85, 6.15)$. Using Proposition \ref{inclusion_prop}, we obtain the following constraints on the control points and $1st$ order VCPs:
\begin{align*}
  &\vect{P}_j \in S_v, \quad j=13,\dots, 32,\\
  \|&\vect{P}_j^{(1)}\|_2\leq \overline{v}, \quad j=14,\dots,32.
\end{align*}
% We suppose there is a dynamic takeoff/landing platform for the quadcopter with center $\vect{p}_{0f}(t)\in\mathcal{C}_v$, where 
% \begin{equation*}
%     \mathcal{C}_v \triangleq \{\vect{r} \in \mathbb{R}^3\mid (-0.5,0,0)^T\leq\vect{r}\leq (0.5,0.2,0)^T\}
% \end{equation*}
% is a subregion of the ground plane and  $\vect{p}_{0f}(t)$ is unknown. 
% The initial and final conditions of the trajectory are given as
% \begin{align*}
%     \vect{r}^{\text{ref}}(\tau_0) &= \vect{r}^{\text{ref}}(\tau_v) = \vect{p}_{0f}(t),\\
%     \dot{\vect{r}}^{\text{ref}}(\tau_0) &= \dot{\vect{r}}^{\text{ref}}(\tau_v) = 0.
% \end{align*}
% \XX{I don't understand why the initial and final positions are the same. What makes sense to me is the platform is constantly changing, and the trajectory for safe landing is computed in real time.} \VF{The initial position in this example is not where the quadcopter is. The initial position is the landing platform. Therefore, as the landing platform moves, the start/end of the trajectory change. This is still online-replanning.}
In the experiment, measurements of $\vect{p}_{0f}(t)$ are obtained at 100 Hz and the convexity of \eqref{FLAT-SOCP} allows us to recompute trajectories $\vect{r}^{\text{ref}}(t)$ satisfying \eqref{slowzone_specs} in real-time. The tracking is done by a nominal controller $\pi(\vect{z})$ filtered by \eqref{CBF_QP} with parameters $\delta = 0.1$, $a_1 = 6$ and $a_2 = 8$.

%at high rates. 
% The above conditions are impossible to satisfy because $\vect{p}_{0f}(t)$ is unpredictable. However, we can achieve close performance by recomputing $\vect{r}(t)$ as measurements of $\vect{p}_{0f}(t)$ are obtained. 
We implement this experiment using a Crazyflie2.1 nano quadcopter and a Turtlebot 2 as the dynamic platform. Figure \ref{fig:slowzone_3D} shows the considered scenario, the quadcopter tracking data and the measurements of $\vect{p}_{0f}(t)$.
% the evolution of the trajectories $\vect{r}^{\text{ref}}(t)$ as they are recomputed by solving \eqref{FLAT-SOCP} with new measurements of $\vect{p}_{0f}(t)$. 
The Crazyflie2.1 takes off near $\vect{p}_{0f}(0)$ and lands near $\vect{p}_{0f}(9)$, by tracking the reference trajectory $\vect{r}^{\text{ref}}(t)$, which is recomputed at 30Hz. Additionally, we show the velocity profiles $\|\dot{\vect{r}}^{\text{ref}}(t)\|_2$ of the reference trajectories in Figure \ref{fig:slowzone_vel}. It can be seen that all computed trajectories respect the constrained-velocity safety specification $\|\dot{\vect{r}}^{\text{ref}}(t)\|_2\leq 0.5$ for all $t\in[3,6)$. We also show the measured velocity magnitude of the quadcopter throughout the experiment.

% shows velocity profiles of computed trajectories when the platform positions $\vect{p}_{0f}$ are randomized. Furthermore, the expected velocity profiles for trajectories with unconstrained speed are also shown for comparison. 
% Additionally, Figure \ref{fig:slowzone} provides velocity profiles for trajectories with and without the enforced speed limit for comparison. 
% We realize a modified version of this example experimentally where the safe zone $S_v$ is an ellipsoid passing through a hoop and there is only one waypoint at its center. The robots are a Crazyflie2.1 nano quadcopter and a Turtlebot 2 robot as the takeoff/landing platform. The Turtlebot 2 is remotely controlled by a human to simulate the unknown $\vect{p}_{0f}(t)$. \XX{This paragraph is impossible to parse. I have no idea what this experiment is about after reading it. Is the experiment video kept the same? Why change the setting/statement of this example?} \VF{I changed the paragraph to make it a bit more clear. I may bring back the ellipsoid $S_v$ to match the video, but the velocity profiles for that case are less exciting than the ones currently in Figure \ref{fig:slowzone}}

\begin{figure}
    \centering
    \subfigure[\label{fig:slowzone_3D}Scenario for Example \ref{example:slowzone} showing the waypoints $\vect{p}^{wp}_1$ and $\vect{p}^{wp}_2$ with radius $d_{wp}=0.2$, the constrained-velocity zone $\mathcal{S}_v$ through the \emph{hoop} obstacle, and the measured trajectories of the dynamic platform (i.e., $\vect{p}_{0f}(t)$) and  Crazyflie2.1. 
    % New trajectories $\vect{r}^{\text{ref}}(t)$ are obtained at $30$Hz. However, we only show every 10th trajectory in the plot to avoid cluttering the figure. 
    The quadcopter takes off near $\vect{p}_{0f}(0)$ and lands near $\vect{p}_{0f}(9)$, which is accomplished by recomputing the reference trajectory $\vect{r}^{\text{ref}}(t)$. 
    % Trajectories are drawn in increasingly darker shades of grey as they are computed.
    % while the platform moved according to $\vect{p}_{0f}(t)$. Measurements o satisfying $\vect{r}(t)\in S_v$ when $t\in[3,7)$ with a randomly drawn start and end point $\vect{p}_{0f}$. We show the case when $\|\dot{\vect{r}}(t)\|_2\leq\overline{v},(t\in[3,7))$ is (blue) and isn't (black) enforced for the same takeoff/landing point\VF{Maybe there is no need for this subfigure and we only show the velocity profiles.}.
    %\XX{I have zero clue about what this figure means, even with the help of the caption. Is it the same experiment as previous described?}
    % \XX{Is (a) from real data? The pattern is too regular, which could mean that the trajectory is not replanning in real time. Moving platform does not necessarily mean hard - if you know its movement in advance, then it is the same as stationary. You should have set the initial/end position randomly. The  zoom-in ellipsoid is impossible to read in a A4 paper. It is hard to tell which end point correspond to which starting point. }
    ]{\includegraphics[width=0.4\textwidth]{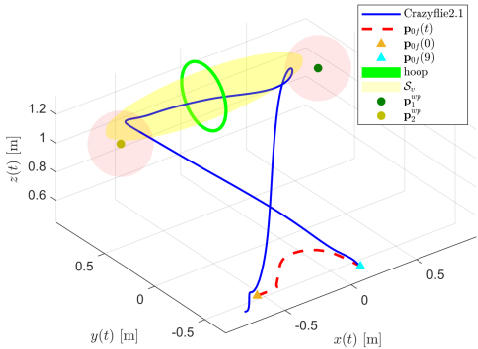}}
    \subfigure[\label{fig:slowzone_vel}Velocity profiles of the trajectories in Example \ref{example:slowzone}. Each reference trajectory satisfies the safety specification $\|\dot{\vect{r}}^{\text{ref}}(t)\|_2\leq 0.5$ for $t\in[3,6)$, and its color is drawn in increasingly darker shades of grey as it is computed. Also shown is the speed of the Crazyflie2.1 as it tracks the evolving reference trajectory. ]
    % It can be seen that the limited-velocity trajectories (blue) satisfy the constraint over interval [2.78,7.22) which is more conservative than needed. This effect can be reduced by increasing the number of control points $N$. 
    % \XX{why does this figure make sense? The velocity is constrained not in [4,6) sec.}
    {\includegraphics[width=8cm]{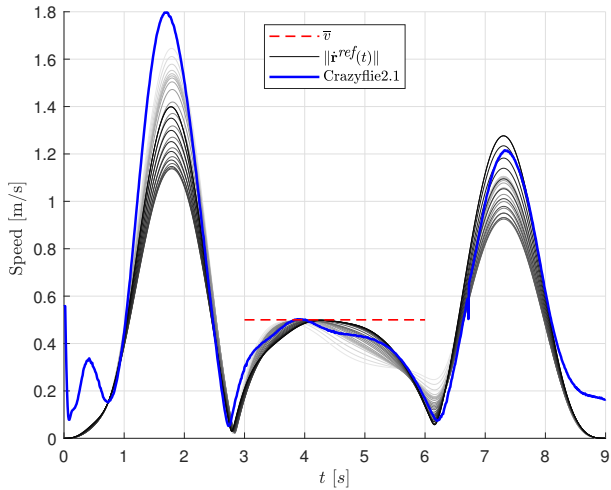}}
    \caption{
    % \XX{The figures, especially the top one, need more explanation.}\VF{Added explanation as subfigures.}
    % \XX{Why this comparison makes any sense? Who cares about the comparison? Shouldn't this example about online replanning?}\VF{Yeah maybe there is no need to show the unconstrained velocity, but the example does also sowcase local velocity bounds, and I think seeing that the bound extends to $[2.78,7.22)$ is an interesting point to show. The online replanning in this case is illustrated by the 200 trajectories for random start/endpoint (moving platform) that are computed fast (~110Hz).} 
    % Example \ref{example:slowzone} trajectories computed as the dynamic platform moved and measurements of $\vect{p}_{0f}(t)$ are obtained. The replanning rate was $30$Hz but we show only every 10th computed trajectory to avoid crowding the figure. We also show the tracking data of the Crazyflie2.1 nano quadcopter.
    Visualization of the experimental data for Example \ref{example:slowzone}.
    }
    
    % comparison of trajectories with and without bounded velocity. We simulate an unpredictable, dynamic takeoff/landing platform by drawing random points $\vect{p}_{0f}$.
    % expected for moving takeoff and landing platforms. All trajectories satisfy the safety constraints \eqref{slowzone_cst} and 
    % The average solve time of \eqref{FLAT-SOCP} was 0.0088 seconds. \VF{There are versions of this figure with 1, 10, 20, 100 and 200 trajectories on the left menu, in case you prefer one of those instead.}} %Please see the accompanying video for an implementation of this example with real robots.}
    \label{fig:slowzone}
\end{figure}

\end{example}

\begin{example} \label{example:forest}
% \subsection{Safe Replanning in Cluttered Environments}
The third example demonstrates safe trajectory planning and tracking in a cluttered environment. The definitions of the start/end zones $\mathcal{C}_1,\ldots,\mathcal{C}_4$, cuboid obstacles $\mathcal{O}_1,\mathcal{O}_2$, \emph{hoop} $\mathcal{O}_{3}$, \emph{desk} $\mathcal{O}_{4}$, \emph{ladder} $\mathcal{O}_{5}$, safe flight space $\mathcal{F}$, and the convex sets  $\mathcal{S}_1,\ldots,\mathcal{S}_6$ (chosen as in Proposition \ref{convex_sets_prop}) are described in Appendix \ref{appendix:forest}. The environment configuration is shown in  Figure \ref{fig:forest}.  We design trajectories that avoid the obstacles and take the quadcopter from any $\vect{p}^0\in\mathcal{C}_i$ to any  $\vect{p}^f\in\mathcal{C}_j$ such that:
\begin{align*} %\label{forest_cst}
    & \vect{r}^{\text{ref}}(t) \in\mathcal{F},\  t\in[t_0,t_f],\\ & \vect{r}^{\text{ref}}(t_0) = \vect{p}^0 , \quad \vect{r}^{\text{ref}}(t_f) = \vect{p}^f. 
\end{align*}
% consider the following problem:
% \begin{problem} \label{prob:forest}
% Given a pair $(i,j), \ i,j\in \{1,\dots,4\}$, find a trajectory satisfying:
% \begin{equation*}
%     \vect{r}^{\text{ref}}(t) \in\mathcal{F}, \quad \vect{r}^{\text{ref}}(t_0) \in \mathcal{C}_i, \quad \vect{r}^{\text{ref}}(t_f)\in \mathcal{C}_j.
% \end{equation*}
% \end{problem}
Because of the choice of convex sets $\{\mathcal{S}_j\}$, given start zone $\mathcal{C}_i$  and end zone $\mathcal{C}_j$, we can always find a finite sequence ($n_{s}<\infty$) of indices $(k_1,k_2,\ldots,k_{n_s}), \ k_l\in\{1,\ldots,6\}, \ l=1,\ldots,n_s$ such that
%\XX{what's $n_s$??}\VF{The number of overlapping convex sets. See Proposition \ref{convex_sets_prop}.}\XX{I know. Need to explain it again here. }
%\XX{ what's $i,j$ below?} \VF{$i$ is the index of the starting set $\mathcal{C}_i$ and $j$ is the index of the ending set $\mathcal{C}_j$. $i\in\{1,\ldots,4\}$.}\XX{I know. The indices are not explained in the equations below.}
\begin{align*}
    &(\mathcal{C}_i \cup \mathcal{S}_{k_1} \cup \mathcal{S}_{k_2} \cup \ldots \cup \mathcal{S}_{k_{n_s}} \cup \mathcal{C}_j)\in\mathcal{F},\\
    &\mathcal{C}_i \cap \mathcal{S}_{k_1} \neq \emptyset, \quad \mathcal{S}_{k_{n_s}}\cap\mathcal{C}_j\neq \emptyset,\\
    &\mathcal{S}_{k_l}\cap \mathcal{S}_{k_{l+1}}\neq \emptyset , \quad l = 1,\ldots,n_s-1,
\end{align*}
where $i,j\in\{1,\ldots,4\}$ are the indices of the start zone $\mathcal{C}_i$ and the end zone $\mathcal{C}_j$, respectively. The sequence of sets $(\mathcal{S}_{k_1},\ldots,\mathcal{S}_{k_{n_s}})$ satisfies the conditions of Proposition \ref{convex_sets_prop} and we can use it to obtain constraints for \eqref{FLAT-SOCP} such that its solution results in a B-spline curve $\vect{r}^{\text{ref}}(t)$ that safely navigates the cluttered environment.
%We describe the setup in detail and show sample trajectories. The accompanying video shows the live experiment with a Crazyflie 2.1 nano quadcopter. 

% We have now satisfied the conditions of Proposition \ref{convex_sets_prop} and can use it to obtain constraints for \eqref{FLAT-SOCP} such that the solution results in a B-spline curve $\vect{r}^{\text{ref}}(t)$ that navigates the cluttered environment as desired. 

In the experiment, we set $\vect{p}^{0}\in\mathcal{C}_i$ as the current position of the quadcopter, and choose a random point $\vect{p}^{f}\in\mathcal{C}_j$ as the next goal position where $\mathcal{C}_j\in\{1,\ldots,4\}$ is also randomly picked. Then, we find a sequence of convex sets $(\mathcal{S}_{k_1},\ldots,\mathcal{S}_{k_{n_s}})$ as above and obtain constraints for \eqref{FLAT-SOCP} from Proposition \ref{convex_sets_prop}. Finally, we solve the optimization problem \eqref{FLAT-SOCP} and generate a reference trajectory. When the quadcopter finishes tracking the trajectory using the CBF-QP-based controller, the process is repeated for a new random $\vect{p}^{f}\in\mathcal{C}_k,k\in\{1,\ldots,4\}$. We repeat this process 6 times, computing each trajectory on-the-fly. The result is a smooth, continuous trajectory that safely navigates the cluttered environment, shown in Figure \ref{fig:forest}. In the experiment, we forced the last $\vect{p}^f$ to match the takeoff location of the quadcopter. The resulting sequence of points visited by the quadcopter was $\vect{p}_1\in\mathcal{C}_2, \vect{p}_2\in\mathcal{C}_3, \vect{p}_3\in\mathcal{C}_2,\vect{p}_4\in\mathcal{C}_1, \vect{p}_5\in\mathcal{C}_3, \vect{p}_6\in\mathcal{C}_4, \vect{p}_1\in\mathcal{C}_2$, and  $\vect{p}_l,l=1,\ldots,6$ are shown in Figure \ref{fig:forest}.

\begin{figure}
    \centering
    \includegraphics[width=7.5cm]{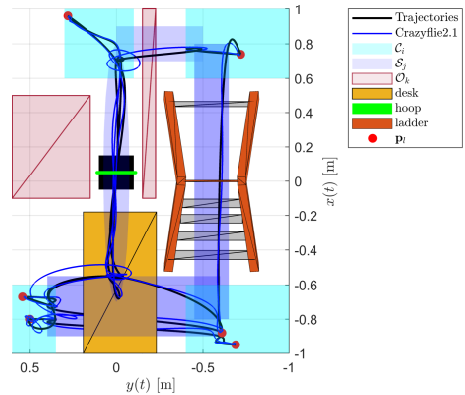}
    \includegraphics[width=7.5cm]{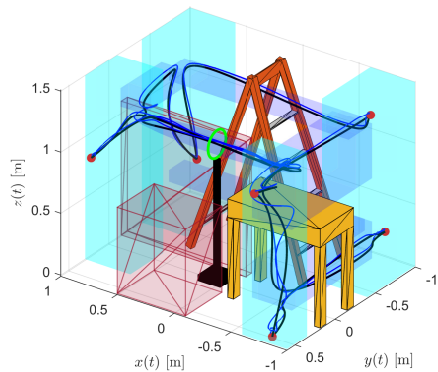}
    \caption{Experimental setup of the cluttered environment and the trajectories generated  for Example \ref{example:forest}. The designated takeoff/landing position is $\vect{p}_1\in\mathcal{C}_2$. The other points $\vect{p}_l,l=2,\ldots,6$ are chosen randomly from the zones $\mathcal{C}_i,i=1,\ldots,4$. The sequence of points visited by the quadcopter is $\vect{p}_1\in\mathcal{C}_2, \vect{p}_2\in\mathcal{C}_3, \vect{p}_3\in\mathcal{C}_2,\vect{p}_4\in\mathcal{C}_1, \vect{p}_5\in\mathcal{C}_3, \vect{p}_6\in\mathcal{C}_4, \vect{p}_1\in\mathcal{C}_2$. The reference trajectories are shown in black and the real trajectory of the recorded tracking data of the Crazyflie 2.1 quadcopter is shown in blue.  We emphasize that the trajectories were generated on-the-fly as each end point was drawn after completing the tracking of the previous trajectory. The average solve-time of \eqref{FLAT-SOCP} was 0.0135 seconds (see Remark \ref{online_replaning_remk}). The supplementary video is given in \cite{victor21video}.} 
    % The plot shows 200 trajectories starting at random points $\vect{p}^0\in\mathcal{C}_i$ and ending at random points $\vect{p}^f\in\mathcal{C}_j$. The average solve time for each trajectory was 0.0135 seconds.\VF{I added versions of this figure with 20 trajectories instead of 200 to the Figures/forest folder. Note that each sub-trajectory (connecting $\mathcal{C}_i$ and $\mathcal{C}_j$) is plotted in increasingly dark black color. So, the start of the overall trajectory is white/gray and the end is dark black. }\VF{I added also the exact trajectory that the video follows. I will try to get tracking data to show also.}} %Please see the accompanying video for an implementation of this example with a real Crazyflie2.1 nano quadcopter.}
    \label{fig:forest}
\end{figure}
\end{example}

\begin{remark} \label{online_replaning_remk}
For experiments that require online replanning, we use Yalmip's \texttt{optimizer} objects to reduce the overhead compiling time, which greatly improves solve time of parametric problems. Typical compile times for an \texttt{optimizer} object of \eqref{FLAT-SOCP} are about $0.7$ seconds and, as previously stated, any subsequent solving of the parameterized \eqref{FLAT-SOCP} takes between $0.01$ and $0.033$ seconds. Note also that even though the segment-wise constraints of Proposition \ref{convex_sets_prop} are not easily parameterized, we can compile multiple \texttt{optimizer} objects that cover all possible scenarios offline. Then, replanning can still be done online (and easily parallelized) for unstructured scenarios as demonstrated in Example \ref{example:forest}. 
% Since the segment-wise constraints of Proposition \ref{convex_sets_prop} are not easily parameterized, we construct a $4\times4$ matrix $\mathbb{O}$ of \texttt{optimizer} objects covering all possible outcomes of drawing a random $(i,j)$ pair. From here, we have two options, if we know exactly the $(i,j)$ pair, we can use the appropriate \texttt{optimizer} object from $\mathbb{O}$ to solve \eqref{FLAT-SOCP}. If we do not know which \texttt{optimizer} object to use, we can simply solve all of them (this operation can be easily parallelized) and discard the objects that return a certificate of infeasibility. If multiple \texttt{optimizer} objects are feasible, then we can choose the most optimal one.
\end{remark}

\section{Conclusions} \label{sec:Conclusions}
We presented a framework based on convex optimization that achieves rigorous continuous-time safety for the planning and tracking of quadcopter trajectories. We also showed several experimental examples that demonstrate the usefulness of the presented framework. We highlight that the presented trajectory optimization method can easily generalize to other differentially flat systems, especially when geometric interpretation of the flat-space constraints can be inferred. The presented safe tracking approach can also generalize to other control-affine mechanical systems. In future work, we will explore customized solvers for the SOCP problem involved, and incorporate high-level ``safe corridor'' finding algorithms into the proposed framework.
%apply the framework to other cyber-physical systems such as autonomous cars and search the literature for suitable high-level algorithms that can generate the inputs to the planning module to satisfy some high-level goal in continuous-time. 
%\XX{Need to add a bit more stuff in the conclusion, such as future work.} \VF{Added some sentences at the end.}

% \XX{Double-check all the references: clean the format, ensure capital letters are right, get rid of ``available'', etc. }

\bibliographystyle{IEEEtran}
\bibliography{IEEEabrv,trajbib}

\appendix
%\section{Generating the matrix $B_r$} \label{B_mat}
%\XX{can get rid of the appendix if no space.}

\subsection{Construction of Matrix $B_r$}\label{B_mat}
The matrix $B_r\in\R^{(N+1)\times(N+r+1)}$ is only defined when $0\leq r\leq d$, and can be computed from two matrices $M_{d,d-r}$ and $C_r$ such that $B_r = M_{d,d-r}C_r$; see \cite{fajar_thesis} for more details. 

The matrix $M_{d,d-r}\in\R^{(N+1)\times(N-r+1)}$ can be constructed recursively by:
\begin{align*}
    M_{d,d-r} &= \begin{cases} I_{N+1},  & r= 0,\\
    \prod_{i=1}^rf_M(\vect{\tau},d,N,i), & 1\leq r \leq d,
    \end{cases}
\end{align*}
where each iteration in the product is a right-multiplication and the matrix-valued function $f_M: \mathbb{R}^{v+1}\times \mathbb{Z}_{>0}\times\mathbb{N}\times\mathbb{N} \rightarrow \mathbb{R}^{(N-i+2)\times(N-i+1)}$ is defined as:
\begin{align*}
    f_M(\vect{\tau},d,N,i) 
    & = \mat{-a_0 &\dots & 0\\
              a_0 & \ddots & \vdots\\
              \vdots &\ddots&-a_{N-i}\\
              0 & \hdots & a_{N-i}},\\
    a_k &= \frac{d-i+1}{\tau_{k+d+1}-\tau_{k+i}}.
\end{align*}
 
The matrix $C_r\in\R^{(N-r+1)\times(N+r+1)}$ is constructed as
%\begin{equation*}
    $$C_r = \mat{\vect{0}_{(N+1-r)\times r}& I_{N+1-r}& \vect{0}_{(N+1-r)\times r}}
    $$ 
%\end{equation*}
%where the zero matrices do not exist when $r=0$, i.e., 
for $r\in \mathbb{Z}_{>0}$ and $C_0=I_{N+1}$.
%$\vect{0}_r$ is a zero matrix of dimension $(N+1-r)\times (r)$ which means that it is nonexistent when $r=0$ and $C_0$ reduces to $I_{N+1}$.
%\end{itemize}

% \subsection{Properties of B-splines} \label{bsplineprop} 
% Some properties of B-splines that are  used in this paper are listed below:
% \begin{itemize}
%     \item[P1)] \textit{Continuity}. Given a clamped and uniform knot vector satisfying \eqref{clamped_uniform},  $\vect{s}(t)$ is $C^{\infty}$ for any  $t\notin \vect{\tau}$ and $C^{d-1}$ for any $t\in\vect{\tau}$.
%     \item[P2)] \textit{Convexity}. The B-Spline basis functions have ``partition of unity'' property, meaning that: 
%     %(i) $\lambda_{i,d}(t) \geq 0$; (ii) $\sum_{i=0}^{N}\lambda_{i,d}(t)= 1, \ \forall t\in[\tau_0,\tau_v)$.
%     \begin{itemize}
%         \item[(i)] $\lambda_{i,d}(t) \geq 0$;
%         \item[(ii)] $\sum_{i=0}^{N}\lambda_{i,d}(t)= 1, \ \forall t\in[\tau_0,\tau_v)$.
%     \end{itemize}
%     \item[P3)] \textit{Local Support}. Any B-spline basis is only nonzero over a local set of knots: $\lambda_{i,d}(t)\neq 0$ iff $t\in[\tau_i,\tau_{i+d+1})$.
%     \item[P4)] \textit{Strong Convexity}. Segments of the B-spline curve are contained within the convex hull of a limited set of control points. Combining each segment we obtain 
%     %$ \vect{s}(t) \in \bigcup_{i=d}^{N}\text{Conv}\{\vect{P}_{i-d},\dots,\vect{P}_i\}.$
%     \begin{equation*}
%         \vect{s}(t) \in \bigcup_{i=d}^{N}\text{Conv Hull}\{\vect{P}_{i-d},\dots,\vect{P}_i\}.
%     \end{equation*}
% \end{itemize}

\subsection{Configuration of the Environment for Example \ref{example:forest}}\label{appendix:forest}
% \XX{Revise accordingly.}
The start/end zones $\mathcal{C}_1,\mathcal{C}_2,\mathcal{C}_3,\mathcal{C}_4$ are described by 
%$\mathcal{C}_i \triangleq \{\vect{r}\in\mathbb{R}^3\mid ^{c}\vect{r}\leq \vect{b}_{ci}\}$ where
\begin{align*}
    \mathcal{C}_i &\triangleq \{\vect{r}\in\mathbb{R}^3\mid A\vect{r}\leq \vect{b}^c_i\},\quad i=1,2,3,4,\\
    \mbox{where}\;\quad A &= \mat{I_3 & -I_3}^T,\\
    \mat{{\vect{b}^c_1}^T\\{\vect{b}^c_2}^T\\{\vect{b}^c_3}^T\\ {\vect{b}^{c}_4}^T} &= \mat{1&0.3&1.5&-0.6&0.1&0\\ -0.6&0.6&1.5&1&-0.35&0\\
    -0.6&-0.4&1.5&1&1&0\\1&-0.4&1.5&-0.6&1&0
    }.
\end{align*}
The cuboid obstacles $\mathcal{O}_1,\mathcal{O}_2$ are described by:
\begin{align*}
    \mathcal{O}_i &\triangleq \{\vect{r}\in\mathbb{R}^3\mid A\vect{r}\leq \vect{b}^o_i\},\;i=1,2,
    \end{align*}
    where $A$ is the same as above, and 
    \begin{align*}
   % \mbox{where}\;A\qquad A^o &= \mat{I_3 & -I_3}^T,\\
    \mat{{\vect{b}^o_1}^T\\{\vect{b}^o_2}^T} &= \mat{0.5&0.6&0.7&0.1&-0.15&0\\ 0.1&-0.225&1.1&0.1&0.15&0}.
\end{align*}
The other obstacles $\mathcal{O}_3,\mathcal{O}_4,\mathcal{O}_5$ are:
\begin{itemize}
    \item $\mathcal{O}_3$: a \emph{hoop} centered at $(0, 0,1.1)$ through which the quadcopter can pass.
    \item $\mathcal{O}_4$: a \emph{desk} between $\mathcal{C}_2$ and $\mathcal{C}_3$ so that the quadcopter must fly above or below the \emph{desk} to avoid it.
    \item $\mathcal{O}_5$: a \emph{ladder} between $\mathcal{C}_3$ and $\mathcal{C}_4$ so that the quadcopter must fly between two of its rungs.
\end{itemize}
The considered obstacles allow us to define the safe flight space:
\begin{align*}
    \mathcal{F} &= \mathcal{B} \setminus( \mathcal{O}_1 \cup\cdots\cup \mathcal{O}_5),\\
    \mathcal{B}&\triangleq \{\vect{r}\in\mathbb{R}^3\mid \underline{\vect{b}}\leq\vect{r}\leq \overline{\vect{b}}\},\\
    \underline{\vect{b}} &= \mat{-1 & -1 & 0}^T, \quad \overline{\vect{b}} = \mat{1 & 0.6 & 1.5}.
\end{align*}
Note that $\mathcal{C}_i\in\mathcal{F}$ for $i=1,\ldots,4$. We choose 6 convex sets $\mathcal{S}_j\in\mathcal{F},\ j=1,\ldots,6$, defined as:
\begin{align*}
    \mathcal{S}_j &\triangleq \{\vect{r}\in\mathbb{R}^3\mid A_j\vect{r}\leq \vect{b}^s_j\}, \quad j = 1,\ldots,5,
\end{align*}
where $A_1,\ldots,A_5=A$ and 
\begin{align*}
%    A_j^s &= \mat{I_3 & -I_3}^T, \quad j=1,\dots,5,\\
    \mat{{\vect{b}^s_1}^T\\{\vect{b}^s_2}^T\\{\vect{b}^s_3}^T\\{\vect{b}^s_4}^T\\{\vect{b}^s_5}^T} &= \mat{
    -0.55&0.4&0.5&0.9&0.6&-0.2\\
    -0.55&0.4&1.3&0.9&0.6&-0.85\\
    0.8&-0.45&1.3&0.8&0.65&-1.2\\ 0.8&-0.45&0.47&0.8&0.65&-0.37\\ 0.9&0&1.3&-0.7&0.5&-1.15},\\
    \mathcal{S}_6 &\triangleq \{\vect{r}\in\mathbb{R}^3:\|A^s_6\vect{r}+\vect{b}_6^s\|\leq 1\},\\
    A_6^s &= \text{diag}(1.33,13.3,13.3),\\
    \vect{b}_6^s &= \mat{-0.067 & 0 & -14.67}^T.
\end{align*}

%\begin{IEEEbiography}[{\includegraphics[width=1in,height=1.25in,clip,keepaspectratio]{Bio/Freire_Headshot.jpg}}]%
%{Victor Freire}
%received the B.S. degree in mechanical engineering from University of Wisconsin-Madison, in 2020. He is currently pursuing the M.S. degree from the Department of Mechanical Engineering, University of Wisconsin-Madison.
%He joined ARC Lab in 2019. His research interests include nonlinear
%control, UAV systems, and trajectory planning.
%\end{IEEEbiography}
%
%\begin{IEEEbiography}[{\includegraphics[width=1in,height=1.25in,clip,keepaspectratio]{Bio/xiangru.png}}]
%{Xiangru Xu} is an Assistant Professor in the Department of Mechanical Engineering at the University of Wisconsin-Madison, Madison, Wisconsin, USA. He received his B.S. degree from Beijing Normal University, Beijing, and his Ph.D. degree from Chinese Academy of Sciences, Beijing. Before joining UW-Madison, he was a postdoctoral scholar in the Department of Electrical Engineering and Computer Science at the University of Michigan, Ann Arbor, and the Department of Aeronautics \& Astronautics at the University of Washington, Seattle. His research interests include safety-critical control, nonlinear control, and autonomy.
%\end{IEEEbiography}
% \begin{IEEEbiography}[{\includegraphics[width=1in,height=1.25in,clip,keepaspectratio]{a.png}}]%
% {Xiangru Xu}
% received...
% \end{IEEEbiography}

\end{document}